\documentclass[12pt, reqno]{amsart}

\usepackage{graphicx,graphics}
\usepackage{amsmath, amssymb, amscd, amsthm, comment, amsxtra}
\usepackage{amsfonts}
\usepackage{latexsym}
\usepackage[abs]{overpic}

\usepackage[pdftex, linktocpage=true]{hyperref}
\bibliographystyle{amsalpha}
\baselineskip=24pt
\addtolength{\textwidth}{1.4in}

\newtheorem{thm}{Theorem}[section]
\newtheorem{cor}[thm]{Corollary}

\newtheorem{lem}[thm]{Lemma}

\newtheorem{prop}[thm]{Proposition}

\newenvironment{pf*}[1]{\proof[#1]}{\endproof}
\usepackage{euscript}

\usepackage[OT2,OT1]{fontenc}

\newcommand{\ignore}[1]{{}}

\theoremstyle{definition}
\newtheorem{defn}{Definition}[section]

\theoremstyle{remark}

\newcommand{\tl}{\tilde}

\newcommand{\eps}{\epsilon}




\renewcommand{\Re}{\operatorname{Re}}
\renewcommand{\Im}{\operatorname{Im}}

\numberwithin{equation}{section}
\newcommand{\thmref}[1]{Theorem~\ref{#1}}
\newcommand{\propref}[1]{Proposition~\ref{#1}}
\newcommand{\secref}[1]{\S\ref{#1}}
\newcommand{\lemref}[1]{Lemma~\ref{#1}}

\newcommand{\C}[1]{{\Bbb C_{#1}}}
\newcommand{\I}{P}

\newcommand{\cA}{{\cal A}}
\newcommand{\cU}{{\cal U}}
\newcommand{\cW}{{\cal W}}
\newcommand{\cM}{{\cal M}}
\newcommand{\cV}{{\cal V}}
\newcommand{\cF}{{\cal F}}

\newcommand{\cB}{{\aaa B}}
\newcommand{\cT}{{\cal T}}
\newcommand{\cI}{{\cal I}}
\newcommand{\cN}{{\cal N}}
\newcommand{\cP}{{\cal P}}
\newcommand{\cC}{{\cal C}}
\newcommand{\cH}{{\cal H}}
\newcommand{\cR}{{\cal R}}
\newcommand{\cL}{{\cal L}}
\newcommand{\cD}{{\cal D}}
\newcommand{\cE}{{\cal E}}
\newcommand{\cS}{{\cal S}}

\newcommand{\CC}{{\Bbb C}}
\newcommand{\RR}{{\Bbb R}}

\newcommand{\NN}{{\Bbb N}}
\newcommand{\DD}{{\Bbb D}}

\theoremstyle{plain}

\newtheorem*{cmp}{Contraction Mapping Principle}
\newtheorem{definition}{Definition}

\newtheorem{lemma}{Lemma}




\newcommand{\field}[1]{\mathbb{#1}} 
\newcommand{\diam}[1]{{\rm diam} { \ #1 }}

\newcommand{\dist}[1]{{\rm dist} { #1 }}

\def\id{{\rm id}}

\def\I{\mathbb{I}}
\def\C{\mathbb{C}}
\def\D{\mathbb{D}}
\def\R{\mathbb{R}}

\def\eps{\epsilon}

\def\la{\lambda}
\def\las{\lambda_*}

\def\span{{\rm span}}

\def\arg{\mathrm{arg}}

\def\fC{\field{C}}
\def\fD{\field{D}}

\def\fR{\field{R}}

\def\bF{\mathbf{F}}

\def\cA{\mathcal{A}}
\def\cB{\mathcal{B}}
\def\cC{\mathcal{C}}
\def\cD{\mathcal{D}}
\def\cE{\mathcal{E}}
\def\cF{\mathcal{F}}

\def\cH{\mathcal{H}}

\def\cI{\mathcal{I}}

\def\cL{\mathcal{L}}
\def\cN{\mathcal{N}}
\def\cM{\mathcal{M}}
\def\cT{\mathcal{T}}
\def\cR{\mathcal{R}}
\def\cRG{\mathcal{RG}}
\def\cP{\mathcal{P}}
\def\cS{\mathcal{S}}

\def\cU{\mathcal{U}}
\def\cW{\mathcal{W}}
\def\cV{\mathcal{V}}

\def\cZ{\mathcal{Z}}
\def\cO{\mathcal{O}}

\def\tV{\tilde{V}}
\def\tU{\tilde{U}}
\def\tr{\tilde{r}}
\def\tc{\tilde{c}}

\newcommand{\vareps}{\varepsilon}

\begin{document}
\addtolength{\evensidemargin}{-0.7in}
\addtolength{\oddsidemargin}{-0.7in}






\setcounter{tocdepth}{1}

\title[Siegel disk universality and renormalization]{Golden mean Siegel disk universality and renormalization}

\author{Denis Gaidashev}
\address{Denis Gaidashev, Uppsala University, Uppsala, Sweden}
\email{gaidash@math.uu.se}

\author{Michael Yampolsky}
\address{Michael Yampolsky, University of Toronto, Toronto, Canada}
\email{yampol@math.utoronto.edu}

\date{\today}

\maketitle

\begin{abstract}
We provide a computer-assisted proof of one of  the central open questions in one-dimensional renormalization theory -- universality of the golden-mean Siegel disks. We further show that for every function in the stable manifold of the golden-mean renormalization fixed point the boundary of the Siegel disk is a quasicircle which coincides with the closure of the critical orbit, and that the dynamics on the boundary of the Siegel disk is rigid. 

Furthermore, we extend the renormalization from one-dimensional analytic maps with a golden-mean Siegel disk 
to two-dimensional  dissipative H\'enon-like maps and show that the renormalization hyperbolicity result still 
holds in this setting. 
\end{abstract}

\tableofcontents

\section{Introduction}

One of the central examples of universality on one-dimensional dynamics is
provided by Siegel disks of quadratic polynomials. Let us consider, for instance,
the mapping 
$$P_\theta(z)=z^2+e^{2\pi i\theta}z,\text{ where }\theta=(\sqrt{5}-1)/2$$
is the inverse golden mean. By a classical result of Siegel \cite{Sieg}, the dynamics of $P_\theta$
is linearizable near the origin. The Siegel disk of $P_\theta$, which we will further
denote $\Delta_\theta$ is the maximal neighborhood of zero in which a conformal
change of coordinates reduces $P_\theta$ to the form $w\mapsto e^{2\pi i\theta}w$.
By the results of Douady, Ghys, Herman, and Shishikura \cite{Do}, the topological disk $\Delta_\theta$
is bounded by  a Jordan curve which contains the only critical point of $P_\theta$ and thus coincides with the
postcritical set of $P_\theta$.

It has  been observed numerically (cf. the work of Manton and Nauenberg \cite{MN}), 
that the boundary of $\Delta_\theta$ is asymptotically
self-similar near the critical point. Moreover, the scaling factor is universal
in a large class of analytic mappings with a golden-mean Siegel disk.
In 1983 Widom \cite{Wi} defined a renormalization procedure for $P_\theta$ which
``blows up'' a part of the invariant curve $\partial \Delta_\theta$ near the critical
point, and conjectured that the renormalizations of $P_\theta$ converge to a fixed point.
In addition, he conjectured that in a suitable functional space this fixed point
is hyperbolic with one-dimensional unstable direction.

In 1986
MacKay and Persival \cite{MP} extended the conjecture to other rotation numbers,
postulating the 
existence of a hyperbolic renormalization horseshoe corresponding to Siegel disks of 
analytic maps,
analogous to Lanford's horseshoe for critical circle maps \cite{La1,La2}. 

In 1994 Stirnemann \cite{Stir} gave a computer-assisted proof of the existence of a
renormalization fixed point with a golden-mean Siegel disk, see also \cite{Bur}.
In 1998, McMullen \cite{McM} proved the asymptotic self-similarity of  golden-mean
Siegel disks in the quadratic family. He constructed a version of renormalization 
based on holomorphic commuting pairs of de~Faria \cite{dF1,dF2}, and
showed that the renormalizations of a quadratic polynomial with a golden Siegel disk
 near the critical point converge
to a fixed point geometrically fast. 
More generally, he constructed a renormalization horseshoe for bounded
type rotation numbers, and used renormalization to show that the Hausdorff dimension of
the corresponding quadratic Julia sets is strictly less than two.

A major break-through in the renormalization theory for Siegel disks is due to the concept of {\it cylinder renormalization} which was first introduced by the second author in the context of critical circle maps \cite{Ya1}. For rotation numbers of {\it high type} (that is, for those, whose continued fraction expansion has a large lower bound on the coefficients) cylinder renormalization can also be described as ``almost parabolic renormalization'', based on the nonlinear perturbed Fatou coordinate change. In a seminal paper with far-reaching consequence \cite{IS} H.~Inou and M.~Shishikura proved a version of {\it a priori} bounds for Siegel renormalization of high type, and proved a renormalization convergence result. In 
\cite{Ya3} the second author used the results of \cite{IS} to construct a cylinder renormalization horseshoe for rotation numbers of high type. However, these results do not cover the golden-mean case, which is, in a sense, the extreme opposite of the almost parabolic case.

The issue of hyperbolicity of cylinder renormalization for golden-mean Siegel disks  has been studied in our earlier paper \cite{GaYa}. In that work we provided a numerical evidence for the existence of a hyperbolic fixed point of cylinder renormalization with a single unstable direction, and computed the bounds on its spectrum. The principal difficulty in this numerical study was the computation of the nonlinear change of coordinates in the definition of cylinder renormalization. It was implemented using the constructive Beltrami equation solver developed by the first author in \cite{Ga}. This difficulty has also precluded us from turning the numerical study of \cite{GaYa} into a rigorous computer-assisted proof.

Having thus attracted much attention, the renormalization hyperbolicity part of the conjecture of Widom for golden-mean
Siegel disks has been  open until now. In this paper we finally settle it. To do this, we return to the setting of {\it almost 
commuting holomorphic pairs}, used by Stirnemann \cite{Stir} in his proof of the existence of a golden-mean fixed point. 
Surprisingly, this approach, which was essentially abandoned in the field since the late 1990's has finally bore fruit.

\begin{figure}
\centering
\vspace{2mm}       
\begin{tabular}{c c} 
\includegraphics[height=5cm]{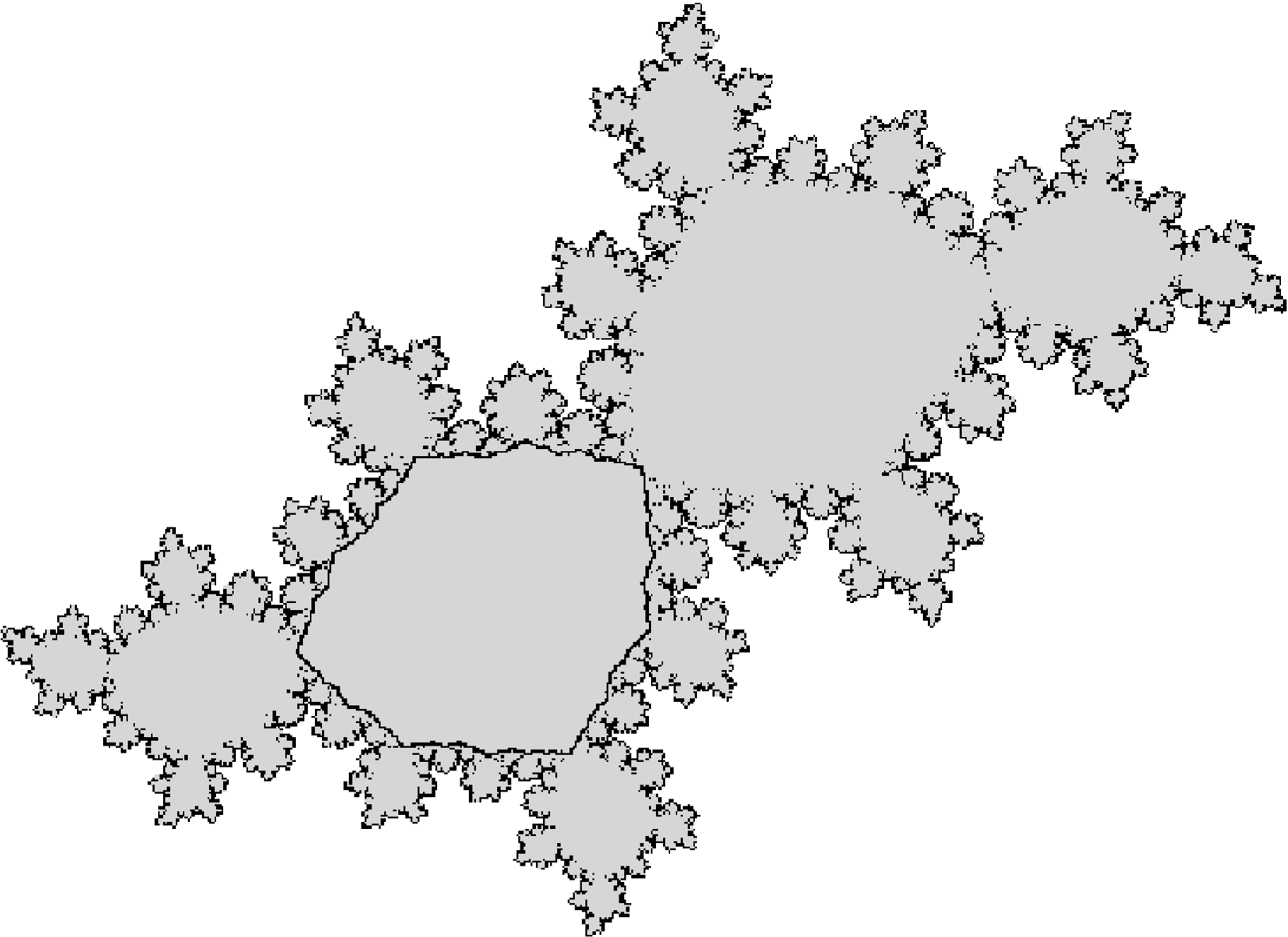} \quad & \quad \includegraphics[width=5cm,height=5cm]{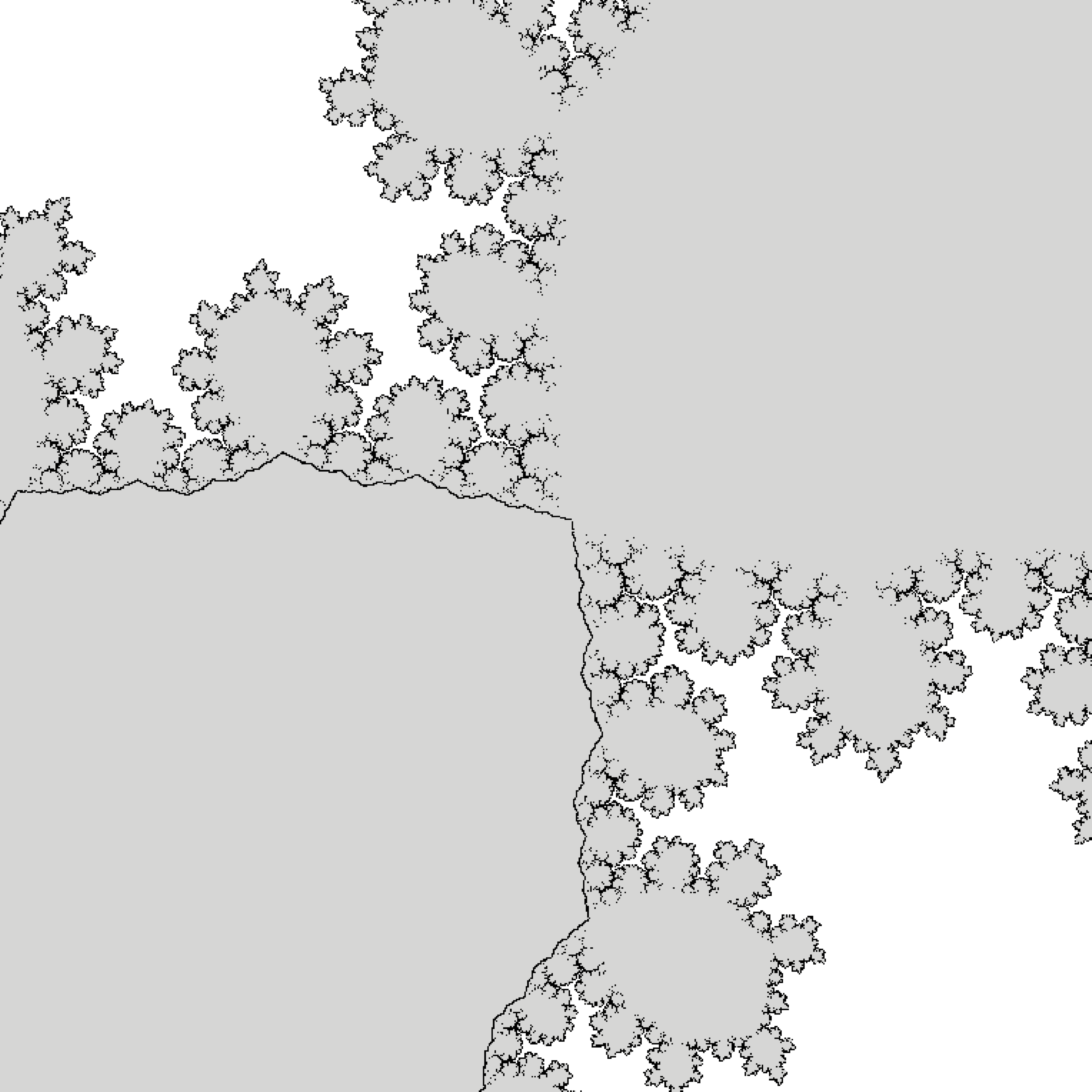}
\end{tabular}
\vspace{4mm}       
\caption{The filled Julia set of the golden mean quadratic polynomial $e^{2 i \pi \theta} z+ z^2$ (grey), together with a blow up around the critical point. The Siegel disk is bounded by the black curve.}
\label{fig:1}       
\end{figure}

Having shown the renormalization hyperbolicity, we use an approach of \cite{Ya3} to prove that for every map in the stable manifold of the golden-mean renormalization fixed point the Siegel disk is a quasidisk, whose boundary coincides with the closure of the critical orbit. Combining this with results of McMullen's  \cite{McM}, we show that the natural quasiconformal conjugacy between any such map and $P_\theta$ (defined in a neighborhood of the Siegel disk) is, in fact, $C^{1+\alpha}$ conformal on the boundary of the Siegel disk. This implies, in particular, that $P_\theta$ lies in the stable manifold of the fixed point, and thus our renormalization fixed point coincides with the one constructed in \cite{McM}.

Our proof of hyperbolicity of renormalization has an important bonus. We are able to extend the definition of Siegel renormalization to 
dissipative H\'enon-like maps, similarly to the way it was done for Feigenbaum renormalization by A.~de~Carvalho, M.~Lyubich, and M.~Martens in \cite{CLM}. 
In this larger function space we show that the golden-mean fixed point
remains hyperbolic with a single unstable direction (in fact, no new non-trivial eigendirections are added to its spectrum by passing to a larger space). In a forthcoming joint paper with R.~Radu \cite{GaRYa} we use this result to settle an important open problem in two-dimensional dynamics -- we show that the Siegel disk of a highly dissipative H\'enon map with a golden-mean semi-Siegel fixed point is bounded by a topological circle, dynamics on which is quasisymmetrically conjugate to a rigid rotation.


\subsection*{Structure of the paper}
The following brief guide to the layout of the paper will be useful to the reader. The next section (\secref{sec:symmetric}) contains the principal definitions of the function spaces of almost commuting pairs and the renormalization transformation acting upon them. In \secref{sec:ACSM} we specialize to considering the subspace of symmetric almost commuting pairs. The somewhat restrictive quadratic symmetry condition is, however, broad enough to include renormalizations of Siegel quadratic polynomials. Further in this section we formulate \thmref{mainthm1} on hyperbolicity of renormalization of symmetric almost commuting pairs, which is our first principal result. The proof of \thmref{mainthm1} occupies a substantial portion of the paper and includes rigorous computer-assisted estimates; we postpone it until  \secref{sec:pfmain1}. 

In \secref{sec:ACM} we generalize the hyperbolicity result to pairs without the symmetry condition. A key ingredient here is a non-linearity norm which is discussed in the opening pages of the section. The hyperbolicity of renormalization result in the space of general almost commuting pairs is proved (in two different Banach metrics) in \thmref{mainthm2} and \thmref{mainthm3}. 

The subject of \secref{sec:quasiarc} is \thmref{mainthm4}, which establishes that the postcritical set of an almost commuting pair in the stable manifold of the renormalization fixed point $\zeta_*$ is an invariant quasi-arc. This result, together with McMullen's theory of holomorphic pairs is used in \secref{sec:holomorphic} to demonstrate that the golden-mean Siegel quadratic polynomial lies in $W^s(\zeta_*)$. 

In \secref{sec:RenACM} we extend our results to dissipative maps in two variables, and prove our final renormalization hyperbolicity result for these maps (\thmref{mainthm6}).

Finally, as already mentioned above, \secref{sec:pfmain1} contains the computer-assisted proof of \thmref{mainthm1}.

\section{Almost commuting pairs and definition of renormalization}
\label{sec:symmetric}

Renormalization of maps with Siegel disks is commonly defined in the language of commuting pairs of analytic maps (cf. \cite{McM,Ya3}). There is, however, no natural structure of a Banach manifold on pairs of commuting holomorphic maps. This has led the second author to introduce the notion of {\it cylinder renormalization operator} $\cR_{\text{cyl}}$ for Siegel maps in \cite{Ya3}, and state renormalization hyperbolicity results for $\cR_{\text{cyl}}$. In this paper we use a different approach, going back to the work of Stirnemann \cite{Stir}, and consider {\it almost commuting pairs}. 

For a topological disk $Z\ni 0$ denote $\cH(Z)$ the Banach space of holomorphic functions $f$ in $Z$ with the uniform norm:
\begin{equation}
\label{eq:unorm1}\| f\|=\sup_{z \in Z} |f|,
\end{equation}
and set $\cH(Z,W)\equiv \cH(Z)\times\cH(W)$.
We will typically use the notation $(\eta,\xi)$ for an element of $\cH(Z,W)$.

We let $\cC(Z,W)$ denote the Banach submanifold of $\cH(Z,W)$ given by the linear conditions 
$$\eta'(0)=\xi'(0)=0.$$
We say that a pair $(\eta,\xi)\in\cC(Z,W)$ is  {\it almost commuting to order $k\geq 0$} if the following holds:  
\begin{equation}\label{eq:accond}
(\eta \circ \xi)^{(n)}(0)=(\xi \circ \eta)^{(n)}(0)), \ 0\leq n\leq k;\; \eta''(0)>0;\;\xi''(0)>0,\;\text{ and }\xi(0)=1.
\end{equation}
The following is immediate:
\begin{prop}
Let $\zeta=(\eta,\xi)\in\cC(Z,W)$ be a pair such that $\xi(0)=1$. Then $\zeta$ is almost commuting to order $k$ if and only if the commutator 
\begin{equation}\label{eq:commutator}
[\eta,\xi]\equiv \eta\circ \xi(z)-\xi\circ\eta(z)=o(z^k).
\end{equation}
\end{prop}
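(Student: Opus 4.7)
The plan is to deduce the proposition essentially from the standard Taylor-expansion fact that a holomorphic germ $g$ at $0$ satisfies $g(z)=o(z^k)$ if and only if $g^{(n)}(0)=0$ for all $0\leq n\leq k$ (equivalently, $g$ vanishes to order $\geq k+1$ at the origin).

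First I would observe that, under the hypothesis $\xi(0)=1$, the composition $\eta\circ\xi$ is well-defined and holomorphic in a neighborhood of $0$ (since we may arrange $\xi$ to map a neighborhood of $0$ into the domain $Z$ of $\eta$), and similarly $\xi\circ\eta$ is holomorphic near $0$ because $\eta(0)$ lies in the domain $W$ of $\xi$ (recall $\eta(0),\xi(0)$ are the values at the fixed points of the pair). Hence the commutator $[\eta,\xi]=\eta\circ\xi-\xi\circ\eta$ is a holomorphic germ at $0$, and by linearity its Taylor coefficients are computed termwise:
$$[\eta,\xi]^{(n)}(0) \;=\; (\eta\circ\xi)^{(n)}(0)\;-\;(\xi\circ\eta)^{(n)}(0) \quad \text{for every } n\geq 0.$$

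Second, I would apply the Taylor fact to $g=[\eta,\xi]$: the condition $[\eta,\xi](z)=o(z^k)$ is equivalent to the simultaneous vanishing of $[\eta,\xi]^{(n)}(0)$ for $0\leq n\leq k$, which by the formula above is precisely the derivative condition
$(\eta\circ\xi)^{(n)}(0)=(\xi\circ\eta)^{(n)}(0)$, $0\leq n\leq k$ appearing in \eqref{eq:accond}. The remaining pointwise conditions $\eta''(0)>0$, $\xi''(0)>0$, and $\xi(0)=1$ are standing algebraic hypotheses on the pair $\zeta=(\eta,\xi)$ and are not part of the commutator vanishing statement; the equivalence is therefore to be read modulo these, with $\xi(0)=1$ explicitly assumed in the proposition and the two positivity conditions treated as defining features of the space of pairs under consideration.

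There is no real obstacle: the statement is, in essence, a reformulation of the defining relations \eqref{eq:accond} as a single order-of-vanishing condition on the commutator, and the proof is a one-line application of Taylor's theorem together with linearity of differentiation applied to the difference $\eta\circ\xi-\xi\circ\eta$.
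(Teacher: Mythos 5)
Your argument is correct and is exactly the one the paper has in mind: the paper states this proposition without proof (``The following is immediate''), and the intended justification is precisely your observation that, by linearity of differentiation, the Taylor coefficients of the germ $[\eta,\xi]$ at $0$ are the differences $(\eta\circ\xi)^{(n)}(0)-(\xi\circ\eta)^{(n)}(0)$, so $[\eta,\xi](z)=o(z^k)$ is equivalent to the derivative conditions in the definition of almost commuting to order $k$. Your remark that the conditions $\eta''(0)>0$, $\xi''(0)>0$, $\xi(0)=1$ sit outside the commutator statement and must be carried as standing hypotheses matches the paper's implicit reading, so there is nothing to add.
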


In the case $k=2$, we will simply call the pair {\it almost commuting (or a.c.)}; almost commuting pairs will be the principal subject of this paper.
We denote $\cB(Z,W)$ the subset of $\cC(Z,W)$ consisting of a.c. pairs.  

Let $c(z)$ denote the operation of complex conjugation: $c(z)=\bar{z}$. 
Let $(\phi,\psi)\in\cE(U,V)$ and, as usual, set $\eta=\phi\circ q_2$, $\xi=\psi\circ q_2$.

\begin{definition}\label{def:renormalizability} 
We will say that the  pair $(\eta,\xi)\in\cH(Z,W)$ is {\it renormalizable}, if 
\begin{eqnarray}
\label{eq:inclusion1} \lambda(c(W)) &\Subset& Z, \\
\label{eq:inclusion2} \lambda(c(Z)) &\Subset& W, \\
\label{eq:inclusion3} \xi(\lambda(c(Z))) &\Subset& Z,
\end{eqnarray}
where $\lambda(z)=\xi(0) \cdot z$. The renormalization of a pair $(\eta,\xi)$ will be defined as 
\begin{equation}\label{eq:renpairs}
\cR (\eta,\xi)=\left(c \circ \lambda^{-1} \circ \eta \circ \xi \circ \lambda \circ c  ,  c \circ \lambda^{-1} \circ \eta \circ \lambda \circ c \right).
\end{equation}
\end{definition}

We note:
\begin{prop}
\label{prop-m-invt}
For every $k\geq 2$, 
renormalization preserves the set of pairs which almost commute to order $k$.
\end{prop}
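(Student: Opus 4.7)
The plan is to reduce everything to the commutator characterization supplied by the preceding proposition: for a pair with $\xi(0)=1$, the a.c.\ condition to order $k$ is equivalent to $[\eta,\xi](z) = o(z^k)$. So the main task is to show that $[\tilde\eta,\tilde\xi](z) = o(z^k)$ whenever $[\eta,\xi](z) = o(z^k)$, where $(\tilde\eta,\tilde\xi)=\cR(\eta,\xi)$. The accompanying normalizations $\tilde\xi(0)=1$, $\tilde\eta'(0)=\tilde\xi'(0)=0$, and $\tilde\eta''(0),\tilde\xi''(0)>0$ will be verified afterwards by direct inspection of the Taylor expansions of $\tilde\eta$ and $\tilde\xi$ at $0$, using the vanishing of first derivatives and positivity of second derivatives of $\eta$ and $\xi$ at $0$ together with the $n=0,1,2$ almost-commuting identities.

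The algebraic heart of the commutator computation is that in the compositions $\tilde\eta\circ\tilde\xi$ and $\tilde\xi\circ\tilde\eta$ the intermediate $c\circ c$ and $\lambda\circ\lambda^{-1}$ collapse to the identity, yielding
\begin{align*}
\tilde\eta\circ\tilde\xi &= c\circ\lambda^{-1}\circ\eta\circ\xi\circ\eta\circ\lambda\circ c,\\
\tilde\xi\circ\tilde\eta &= c\circ\lambda^{-1}\circ\eta\circ\eta\circ\xi\circ\lambda\circ c.
\end{align*}
Setting $w:=\lambda(\bar z)=\xi(0)\bar z$ and using the $\CC$-linearity of $\lambda^{-1}$ together with the antilinearity of $c$, subtraction gives
$$[\tilde\eta,\tilde\xi](z) \;=\; \frac{1}{\overline{\xi(0)}}\,\overline{\eta\bigl((\xi\circ\eta)(w)\bigr) - \eta\bigl((\eta\circ\xi)(w)\bigr)}.$$

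By hypothesis, $(\xi\circ\eta)(w) - (\eta\circ\xi)(w) = -[\eta,\xi](w) = O(w^{k+1})$. A Taylor expansion of $\eta$ about the point $u := (\eta\circ\xi)(0) = \eta(1)$, which lies in the domain of $\eta$ by the renormalizability inclusions \eqref{eq:inclusion1}--\eqref{eq:inclusion3}, gives
$$\eta\bigl((\xi\circ\eta)(w)\bigr) - \eta\bigl((\eta\circ\xi)(w)\bigr) = \eta'(u)\cdot O(w^{k+1}) + O(w^{2(k+1)}) = O(w^{k+1}),$$
and substituting $w = \xi(0)\bar z$ and conjugating turns this into $[\tilde\eta,\tilde\xi](z) = O(z^{k+1}) = o(z^k)$, as desired.

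I do not expect a substantial obstacle: the argument is essentially formal, relying only on the chain rule plus the fact that $\eta$ is analytic at $\eta(1)$. The only care needed is the bookkeeping of the conjugations by $c$ in the commutator identity and the verification that the domain inclusions keep all intermediate compositions within the domains of $\eta$ and $\xi$. The ``remaining conditions'' portion is routine: writing $\tilde\xi(z)=\overline{\xi(0)^{-1}\eta(\xi(0)\bar z)}$ and $\tilde\eta(z)=\overline{\xi(0)^{-1}\eta(\xi(\xi(0)\bar z))}$, one reads off the relevant coefficients from the series of $\eta$ and $\xi$.
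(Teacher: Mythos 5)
Your argument is essentially the paper's own proof: both reduce, via the commutator characterization, to estimating $\eta(\xi\circ\eta(w))-\eta(\eta\circ\xi(w))$ and conclude by a first-order expansion of $\eta$ that this difference is $o(w^k)$, the linear rescaling and the conjugations by $c$ being harmless. The only cosmetic differences are that you expand $\eta$ about the fixed point $\eta(1)$ rather than the moving point $\xi\circ\eta(z)$ (with a slightly misstated remainder $O(w^{2(k+1)})$ where the cross term is really $O(w^{k+2})$, which does not affect the conclusion) and that you explicitly track the factor $\overline{\xi(0)}^{-1}$, which the paper suppresses.
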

\begin{proof}
\ignore{
 Indeed,  using the relation $\partial_z c(\eta(\bar{z}))=c(\partial_{\bar{z}}\eta(\bar{z}))=c(\eta'(\bar{z}))$, the set of conditions $(\ref{eq:accond})$ for the renormalized pair $(\tilde{\eta},\tilde{\xi})$ becomes
\begin{eqnarray}
\nonumber (\tilde{\eta} \circ \tilde{\xi})(0)&-&(\tilde{\xi} \circ \tilde{\eta})(0)=\\
\nonumber &=& \left(c \circ \lambda^{-1} \circ \eta \circ \xi  \circ \eta \circ \lambda \circ c\right)(0)-\left(c \circ \lambda^{-1} \circ \eta  \circ \eta \circ \xi \circ \lambda \circ c \right)(0)\\
\nonumber &=& \left(c \circ \lambda^{-1} \circ \eta \circ \xi  \circ \eta \right)(0)-\left(c \circ \lambda^{-1} \circ \eta  \circ \eta \circ \xi \right)(0)\\
\label{eq:rcommute0} & =& 0 \quad \underline{{\rm by} \ \eta(\xi(0))=\xi(\eta(0))}, \\
\nonumber  (\tilde{\eta} \circ \tilde{\xi})'(0)&-&(\tilde{\xi} \circ \tilde{\eta})'(0)=\\
\nonumber &=& \left(c \circ \lambda^{-1} \circ \eta \circ \xi \circ \eta \circ  \lambda \circ c \right)'(0)- \left(c \circ \lambda^{-1} \circ \eta \circ \eta \circ \xi \circ \lambda \circ c \right)'(0)\\
 \nonumber &=& c \left( \eta' \circ \xi(\eta(0)) \cdot (\xi \circ \eta)'(0) \right) - c \left( \eta' \circ \eta(\xi(0)) \cdot (\eta \circ \xi)'(0) \right) \\
 \label{eq:rcommute1}  &=&0  \quad \underline{{\rm by} \ \eta(\xi(0))=\xi(\eta(0)) \ {\rm and} \ (\eta \circ \xi)'(0)=(\xi \circ \eta)'(0)},\\
\nonumber  (\tilde{\eta} \circ \tilde{\xi})''(0)&-&(\tilde{\xi} \circ \tilde{\eta})''(0)=\\
\nonumber &=& \left(c \circ \lambda^{-1} \circ \eta \circ \xi \circ \eta \circ  \lambda \circ c \right)''(0)- \left(c \circ \lambda^{-1} \circ \eta \circ \eta \circ \xi \circ \lambda \circ c \right)''(0)\\
 \nonumber &=& c \left( \eta' \circ \xi(\eta(0)) \cdot (\xi \circ \eta)''(0) \right) - c \left( \eta' \circ \eta(\xi(0)) \cdot (\eta \circ \xi)''(0) \right) \\
 \nonumber  &\phantom{=}& \phantom{c \left( \eta' \circ \xi(\eta(0)) \cdot (\xi \circ \eta)''(0) \right)} + c \left( \eta'' \circ \xi(\eta(0)) \cdot ((\xi \circ \eta)'(0))^2 \right) \\
\nonumber  &\phantom{=}& \phantom{c \left( \eta' \circ \xi(\eta(0)) \cdot (\xi \circ \eta)''(0) \right)} - c \left( \eta'' \circ \eta(\xi(0)) \cdot ((\eta \circ \xi)'(0))^2 \right) \\
\label{eq:rcommute2}  &=&0  \quad \underline{{\rm by} \ (\eta \circ \xi)^{(n)}(0)=(\xi \circ \eta)^{(n)}(0), \ n=0,1,2.}
\end{eqnarray}
}
Suppose  (\ref{eq:commutator}) holds for $\zeta=(\eta,\xi)$. Consider the commutator of $\eta\circ \xi$ and $\eta$:
$$\eta\circ\xi\circ\eta(z)-\eta^2(z)\circ\xi(z)=\eta(\xi\circ\eta(z))-\eta(\eta\circ\xi(z)).$$
Denoting $x=\xi\circ\eta(z)$ and $y=\eta\circ\xi(z),$ we have
$$\eta(x)-\eta(y)=\eta'(x)(x-y)+o(x-y).$$
Thus, 
$$\eta\circ\xi\circ\eta(z)-\eta^2(z)\circ\xi(z)=o(z^k),$$
and hence $\cR\zeta$ is almost commuting to order $k$.
\end{proof}

\section{Renormalization of almost commuting symmetric pairs}\label{sec:ACSM}
In this section we work with the same definitions as Stirnemann~\cite{Stir}, within a class of almost commuting pairs $\cM$ with a quadratic symmetry condition. The class $\cM$ is too narrow for generalizations, and we will expand it in the following section -- however, the results we prove for $\cM$ will form a basis of  further discussion.


Let $U$ and $V$ be two simply-connected domains containing the origin. Set $$q_2(z)=z^2$$ and let 
$$Z\equiv q^{-1}_2(U),\;W\equiv q^{-1}_2(V).$$
We denote $\cE(U,V)$ the subset of $\cH(Z,W)$ consisting of all pairs which factor as 
\begin{equation}\label{eq:factors}
\eta=\phi\circ q_2,\;\xi=\psi\circ q_2,
\end{equation}
where $\phi$ and $\psi$ are holomorphic functions on $U$ and $V$ respectively. We will identify $\cE(U,V)$ with the Banach space $\cH(Z,W)$, and will use the notation $(\eta,\xi)$ and $(\phi,\psi)$ for a pair in $\cE(U,V)$interchangeably, whenever it does not cause any ambiguity.  We will refer to $(\phi,\psi)$ as {\it factors} of $(\eta,\xi)$.

We say that $(\eta,\xi)\in\cE(U,V)$ is an {\it almost commuting symmetric (a.c.s.) pair} if $(\eta,\xi)\in\cB(Z,W)$.
In this case, we will call the maps $\phi$ and $\psi$ {\it a.c.s. factors}. 
We will denote $\cM(U,V)\subset \cE(U,V)$ the set of a.c.s. factors (again equipped with the uniform norm coming from $\cH(U,V)$).

We note:

\begin{prop}
\label{prop-submfld1} Let $\cW\subset \cE(U,V)$  be an open set in which $\psi'(0)\neq 0$ (for instance, small open neighborhoods
 of a pair of factors in which $\psi$ is locally univalent at the origin).
Then the  space  $\cM(U,V)\cap \cW$ is a Banach submanifold of $\cE(U,V)$.
\end{prop}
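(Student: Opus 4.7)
The plan is to realize $\cM(U,V)\cap\cW$ as (an open subset of) the zero set of a holomorphic submersion $\Phi:\cW\to\CC^3$, and then invoke the Banach implicit function theorem.

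First I would unpack (\ref{eq:accond}) via the chain rule using $\eta=\phi\circ q_2$ and $\xi=\psi\circ q_2$. One immediately finds that $\eta'(0)=\xi'(0)=0$ (so membership in $\cC(Z,W)$ is automatic) and that $(\eta\circ\xi)'(0)=(\xi\circ\eta)'(0)=0$, which makes the first-order commutation condition vacuous. The positivity conditions $\eta''(0),\xi''(0)>0$ translate into $\phi'(0),\psi'(0)>0$, which are open. The remaining defining equations reduce, after a direct computation, to three scalar equations: $\psi(0)=1$; the zeroth-order commutation $\phi(1)=\psi(\phi(0)^2)$; and the second-order commutation $\phi'(1)\psi'(0)=\phi(0)\phi'(0)\psi'(\phi(0)^2)$. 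Let $\Phi:\cW\to\CC^3$ collect the three differences; then $\Phi$ is holomorphic and $\cM(U,V)\cap\cW=\Phi^{-1}(0)\cap\{\phi'(0),\psi'(0)>0\}$. Hence it suffices to show that $\Phi$ is a submersion.

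For surjectivity of $D\Phi_{(\phi,\psi)}$, observe that $D\Phi$ depends only on the eight linear functionals giving the values and first derivatives of $\dot\phi$ at $0$ and $1$, and of $\dot\psi$ at $0$ and $\phi(0)^2$. When $\phi(0)\neq 0$ these four evaluation points are pairwise distinct, and a Hermite interpolation argument shows that any eight prescribed complex values are realized by some $(\dot\phi,\dot\psi)\in\cH(U)\times\cH(V)$. Since the coefficient of $\dot\phi'(1)$ in the third coordinate of $D\Phi$ is $\psi'(0)\neq 0$, while the first and second coordinates are independently controlled by $\dot\psi(0)$ and $\dot\phi(1)$, $D\Phi$ surjects onto $\CC^3$. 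In the degenerate case $\phi(0)=0$, membership in $\cM$ forces $\phi(1)=1$ and $\phi'(1)=0$, and the differential collapses to $\bigl(\dot\psi(0),\,\dot\phi(1)-\dot\psi(0),\,\psi'(0)(\dot\phi'(1)-\phi'(0)\dot\phi(0))\bigr)$, which is again manifestly surjective thanks to $\psi'(0)\neq 0$.

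The essential role of the hypothesis $\psi'(0)\neq 0$ should be apparent: without it the third coordinate of $D\Phi$ degenerates in the $\dot\phi'(1)$ direction and transversality can fail. Granted surjectivity, the Banach implicit function theorem yields $\Phi^{-1}(0)\cap\cW$ as a closed complex-analytic Banach submanifold of $\cE(U,V)$ of codimension~$3$; intersecting with the open positivity locus produces $\cM(U,V)\cap\cW$ as the desired Banach submanifold. The only mildly delicate point is the chain-rule computation reducing the a.c.s.\ conditions to the three explicit equations, together with the case analysis at $\phi(0)=0$; everything else is formal.
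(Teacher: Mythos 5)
Your argument is correct, and its skeleton is the same as the paper's: reduce the a.c.s.\ conditions to the three scalar equations (the $n=1$ commutation relation being vacuous since $\eta'(0)=\xi'(0)=0$) and apply the implicit function/regular value theorem in $\cH(U,V)$, with $\psi'(0)\neq 0$ as the key nondegeneracy. The difference lies in how transversality is verified. The paper does not argue abstract surjectivity of the differential; it singles out the explicit three-dimensional transversal spanned by the coefficients $\hat\phi_2,\hat\phi_3$ of $z^2,z^3$ in $\phi$ and the constant term $\hat\psi_0$ of $\psi$, and computes the $3\times 3$ Jacobian in those directions, whose determinant is $\psi(0)^{9}\psi'(0)$. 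That choice buys two things: the nonvanishing is uniform on a neighborhood of $\cM(U,V)$ with no case analysis (the determinant does not involve $\phi(0)$, so your separate treatment of $\phi(0)=0$ becomes unnecessary), and it yields a global parametrization of the zero set by the complementary coordinates $(q,s)$, i.e.\ exactly the perturbation directions $(az^2+bz^3,\,c)$ that reappear in the projection $\cP$ of Proposition~\ref{prop-proj}. Your route --- substituting the normalization $\psi(0)=1$ into the commutation conditions, noting that the differential factors through the eight evaluation functionals, and getting surjectivity from Hermite interpolation --- is more abstract and flexible, but it delivers only the submanifold structure, not the explicit transversal that the paper exploits later; it also needs $1\in U$ (the paper's version needs $\psi(0)^2\in U$) and $\phi(0)^2\in V$, assumptions both arguments make tacitly. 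One small caution: your remark that the positivity constraints $\phi'(0),\psi'(0)>0$ are ``open'' is not literally true in the complex Banach space (positivity of a complex number is not an open condition); the paper simply omits these constraints from its proof and identifies the submanifold with $F^{-1}(0)$, so you should either do the same or treat them as additional real constraints rather than as an open locus.
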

\begin{proof}

We would like to write out the commutation conditions $(\ref{eq:accond})$ at $0$ up to order $2$. First, we remark that 
\begin{eqnarray}
\label{eq:1der} \eta'(0)&=&\phi'(0) q_2'(0)=0,  \hspace{43mm}  \xi'(0)=\psi'(0)  q_2'(0)=0,\\
\label{eq:2der} \eta''(0)&=&\phi''(0) (q'_2(0))^2 +\phi'(0) q_2''(0)=2 \phi'(0), \quad  \xi''(0)=2 \psi'(0),
\end{eqnarray}
and that 
\begin{eqnarray}
\label{eq:2etaxider} (\eta \circ \xi)''(0)&=&\eta' \circ \xi(0) \cdot \xi''(0)=4 \phi' \circ q_2 \circ \psi(0) \cdot \psi (0) \cdot \psi'(0), \\
\label{eq:2xietader} (\xi \circ \eta)''(0)&=&4 \psi' \circ q_2 \circ \phi(0) \cdot \phi (0) \cdot \phi'(0),
\end{eqnarray}

{ The first of the equations $(\ref{eq:accond})$, the equation  for $n=0$}, is simply
\begin{equation}\label{eq:accond0}
\eta \circ \xi(0) -\xi \circ \eta(0)=0 \Rightarrow \phi(\psi(0)^2)-\psi(\phi(0)^2)=0.
\end{equation}

{ The second of the equations $(\ref{eq:accond})$, the equation  for $n=1$}, becomes
\begin{equation}\label{eq:accond1}
\eta' \circ \xi(0) \cdot \xi'(0) -\xi' \circ \eta(0) \cdot \eta'(0)=0,
\end{equation}
which is automatically satisfied  because of the condition $(\ref{eq:1der})$.

{ The third of the equations $(\ref{eq:accond})$, the equation  for $n=2$}, becomes
\begin{equation}\label{eq:accond2}
 (\eta \circ \xi)''(0) -(\xi \circ \eta)''(0)=0 \Rightarrow \phi'(\psi(0)^2)  \psi (0)  \psi'(0)-\psi'(\phi(0)^2)  \phi (0)  \phi'(0)=0.
\end{equation}

Let us write $\phi$ and $\psi$ as power series centered at $0$:
$$\phi(z)=\sum_{i=0}^\infty \hat\phi_i z^i\text{, and } \psi(z)=\sum_{i=0}^\infty \hat\psi_i z^i.$$ 

Set
$q(z)=\phi(z)-\hat \phi_2 z^2 -\hat \phi_3 z^3$,    $s(z)=\psi(z)-\hat\psi_0$. We obviously have a parametrization of
$\cE(U,V)$ by the analytic coordinates $$(\hat \phi_2,\hat \phi_3, \hat\psi_0,q(z),s(z)).$$ 
We will show that the set of a.c.s. pairs is parametrized by the coordinates
other than $\hat\phi_2$, $\hat\phi_3$, $\hat\psi_0$. 
To that end consider the set of conditions $(\ref{eq:accond0})$, $(\ref{eq:accond2})$, together with $\psi(0)=1$:,  which define
$\cM(U,V)$ as a subset of $\cH(U,V)$ and write them in coordinates:
\begin{equation} \label{classF}
F(\hat\phi_2,\hat\phi_3,\hat\psi_0; q,s)=0,
\end{equation}
where $F: \cH(U,V) \to  \fR^3$ is given by 
\begin{eqnarray}
\nonumber F_1(\hat\phi_2,\hat\phi_3, \hat\psi_0; q, s)&=&\phi(\psi(0)^2) - \psi(\phi(0)^2) \\
\nonumber     &=&q((s(0)+\hat\psi_0)^2) +\hat\phi_2 (s(0)+\hat\psi_0)^{4}+ \hat\phi_3 (s(0)+\hat\psi_0)^{6}- s(q(0)^2)-\hat\psi_0 \\
\nonumber   F_2(\hat\phi_2,\hat\phi_3, \hat\psi_0; q,s)&=&\phi'(\psi(0)^2) \psi(0) \psi'(0) - \psi'(\phi(0)^2) \phi(0) \phi'(0) \\
\nonumber     &=&\left(q'(\psi(0)^2)+2 \hat\phi_2 (s(0)+\hat\psi_0)^{2}+3 \hat\phi_3 (s(0)+\hat\psi_0)^{4} \right) \psi(0) \psi'(0) - \\
\nonumber  &\phantom{=}&  - s'(q(0)^2) q(0) q'(0) \\
\nonumber      F_3(\hat\phi_2,\hat\phi_3, \hat\psi_0; q, s)&=&\psi(0)-1=s(0)+\hat\psi_0-1.
\end{eqnarray}
Consider the differential of the above equations with respect to the first three coordinates:
\begin{equation}\label{eq:determinant}
\left| \! \!
\begin{array}{c c c}
\psi(0)^{4}   &  \psi(0)^{6} &  2 \phi'(\psi(0)^2) \psi(0)-1   \\
 2 \psi(0)^{3}  \psi'(0)  &   3 \psi(0)^{5} \psi'(0)  &   2 \phi''(\psi(0)^2) \psi(0)^{2} \psi'(0)+ \phi'(\psi(0)^2) \psi'(0) \\
 0  &   0  &  1
\end{array}
\! \! \right|=\psi(0)^{9} \psi'(0).
\end{equation}
Since $\psi(0)\approx 1$ on an open neighborhood of $\cM(U,V)$, the determinant is close to $\psi'(0)$, and hence is non-zero.
By the Regular Value Theorem (see e.g. \cite{ArBo}), the set $F^{-1}(0)$ is an analytic submanifold, parametrized by $q$ and $s$. 

\end{proof}

\subsection{Renormalization for a.c.s. pairs}\label{sec:RenACFM}

If a pair $(\eta,\xi)\in\cE(U,V)$ is renormalizable, we can write the renormalization in terms of its factors as follows:
\begin{equation}
\label{eq:renfactors}\cR(\phi,\psi)=\left(c \circ \lambda^{-1} \circ \phi \circ q_2 \circ \psi \circ \lambda^2 \circ c  ,  c \circ \lambda^{-1} \circ \phi \circ \lambda^2 \circ c \right).
\end{equation}
We will use one and the same symbol $\cR$ for both the renormalization operator $(\ref{eq:renpairs})$  acting on pairs $(\eta,\xi)$ and the operator $(\ref{eq:renfactors})$ acting on the factors; the domain of the operator $\cR$ will be always clear from the context.

According to Proposition \ref{prop-m-invt}, the Banach  submanifold $\cM(U,V)$ is invariant under the renormalization operator. Our hyperbolicity result
holds for the restriction of $\cR$ to this submanifold: otherwise, the codimension of the strong stable manifold of the operator $\cR$ will be greater than one (and hence, the universality statement will not hold). However, for the purposes of a computer-assisted proof, the restriction
$\cR|_{\cM(U,V)}$ is not a good object to work with, since we do not have an explicit analytic parametrization of $\cM(U,V)$.
Therefore, we will borrow a page from the work of Stirnemann~\cite{Stir}, and define an extension of $\cR$ from $\cM(U,V)$ which maps
$$\cE(U,V)\cap \cU\to\cM(U,V), \text{ where }\cU \text{ is an open neighborhood of }\cM(U,V).$$
Namely, we prove the following:
\begin{prop}
\label{prop-proj}
There exists an open neighborhood $\cU$ of $\cM(U,V)$ in $\cE(U,V)$ such that the following holds.
Suppose that a pair of maps $(\eta,\xi) \in \cU$. Then there exists a unique triple of complex numbers $a$, $b$, $c$ such that 
\begin{eqnarray}
\label{Pproj}
\cP (\eta,\xi)(z)\equiv\left( \eta(z)+a z^4+b z^6, \xi(z)+c \right)\in\cM(U,V).
\end{eqnarray}
Moreover, the dependence of the numbers $a$, $b$, and $c$ on $(\eta,\xi)\in\cU$ is analytic.
\end{prop}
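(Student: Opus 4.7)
The plan is a direct application of the holomorphic implicit function theorem, building on the parametrization already used in the proof of Proposition~\ref{prop-submfld1}. The key observation is that the three-parameter perturbation $(\eta,\xi)\mapsto (\eta+az^4+bz^6,\,\xi+c)$ translates, in the factor coordinates of Proposition~\ref{prop-submfld1}, into the shift
$$(\hat\phi_2,\hat\phi_3,\hat\psi_0)\longmapsto (\hat\phi_2+a,\,\hat\phi_3+b,\,\hat\psi_0+c),$$
while leaving the ``transverse'' analytic coordinates $(q,s)$ of $\cE(U,V)$ fixed. Thus the condition $\cP(\eta,\xi)\in \cM(U,V)$ becomes exactly
$$F(\hat\phi_2+a,\,\hat\phi_3+b,\,\hat\psi_0+c;\,q,s)=0,$$
where $F$ is the map \eqref{classF} whose zero set defines $\cM(U,V)$.

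The essential ingredient has already been computed: the Jacobian of $F$ with respect to its first three arguments is given by the determinant \eqref{eq:determinant}, namely $\psi(0)^9\psi'(0)$. On $\cM(U,V)$ one has $\psi(0)=1$, and in the Banach submanifold considered in Proposition~\ref{prop-submfld1} one restricts to pairs with $\psi'(0)\ne 0$; therefore this determinant is uniformly bounded away from zero on $\cM(U,V)$. The holomorphic implicit function theorem applied at each point $(\eta,\xi)\in \cM(U,V)$ then yields a unique holomorphic solution $(a,b,c)$ defined on some open ball in $\cE(U,V)$ about that point, with $(a,b,c)=(0,0,0)$ when $(\eta,\xi)\in\cM(U,V)$.

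The only remaining point is globalization: I need an open neighborhood $\cU$ of the whole submanifold $\cM(U,V)$ on which the solution is defined, unique and analytic, rather than merely a family of pointwise neighborhoods. This follows from the standard quantitative form of the implicit function theorem, since the derivative in \eqref{eq:determinant} and the higher derivatives of $F$ are continuous functionals of $(\eta,\xi)$ bounded uniformly on bounded subsets of $\cH(Z,W)$, so the radius of the ball on which inversion is possible depends only on the local norm of $(\eta,\xi)$. Taking the union of these balls produces the required $\cU$. I do not expect any serious obstacle here; the genuine content of the statement is entirely concentrated in the nonvanishing of the Jacobian \eqref{eq:determinant}, which was verified in Proposition~\ref{prop-submfld1}, and the rest is a routine unpacking of the holomorphic implicit function theorem.
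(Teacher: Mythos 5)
Your reduction is essentially the paper's argument in different packaging: both proofs hinge on the nonvanishing of the same Jacobian \eqref{eq:determinant} of the a.c.s.\ conditions with respect to the $(\hat\phi_2,\hat\phi_3,\hat\psi_0)$-directions, and your observation that $\cP$ acts as the translation $(\hat\phi_2,\hat\phi_3,\hat\psi_0)\mapsto(\hat\phi_2+a,\hat\phi_3+b,\hat\psi_0+c)$ in the coordinates of Proposition~\ref{prop-submfld1} is correct. Two caveats. First, your claim that the determinant $\psi(0)^9\psi'(0)$ is \emph{uniformly} bounded away from zero on $\cM(U,V)$ is unjustified: membership in $\cM(U,V)$ only gives $\xi''(0)=2\psi'(0)>0$, so $\psi'(0)$ can be arbitrarily small along the submanifold. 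This does not damage the conclusion --- pointwise nonvanishing suffices, and $\cU$ may be taken as a union of balls whose radii vary (and may degenerate) along $\cM(U,V)$ --- but the uniformity should not be asserted, and your quantitative globalization step, whose radius in fact also depends on the inverse of the Jacobian and not only on the norm of $(\eta,\xi)$, should be reformulated accordingly. Second, the implicit function theorem only gives uniqueness of the triple $(a,b,c)$ among \emph{small} triples, whereas the proposition (and the paper's proof) asserts genuine uniqueness in $\CC^3$: the normalization equation forces $c=1-\psi(0)$ outright as in \eqref{eq:newFs1}, and once $\hat\psi(0)=1$ the remaining conditions \eqref{eq:accond0} and \eqref{eq:accond2} are \emph{exactly} affine in $(a,b)$ (the perturbation $az^2+bz^3$ of $\phi$ does not change $\phi(0)$ or $\phi'(0)$), yielding the linear system \eqref{eq:newFs2} with determinant $(\psi(0)+c)^{9}\psi'(0)=\psi'(0)\neq 0$; solving it by Cramer's rule gives the unique triple together with manifestly analytic dependence on $(\eta,\xi)$, and also disposes of any consistency-on-overlaps issue in the globalization. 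With these two repairs your argument is correct and coincides, up to the choice of coordinates, with the paper's.
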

\begin{proof}
Set 
 $\hat{\phi}(z)=\phi(z)+a z^2+b z^3$ and $\hat{\psi}(z)=\psi(z)+c$.
We would like $(\hat{\phi},\hat{\psi})$ to satisfy the set of conditions  $(\ref{eq:accond0})$, $(\ref{eq:accond2})$ together with $\hat{\psi}(0)=1$. We immediately have
\begin{equation}\label{eq:newFs1}
c=1-\psi(0).
\end{equation}
Furthermore, the linearization of equations  $(\ref{eq:accond0})$ and $(\ref{eq:accond2})$ for $a$ and $b$ results in
\begin{equation} \label{eq:newFs2}
\left[\!
\begin{array}{c c}
\hat{\psi}(0)^{4} \  & \ \hat{\psi}(0)^{6}   \\
 2 \hat{\psi}(0)^{3}  \hat{\psi}'(0)   &   3 \hat{\psi}(0)^{5} \hat{\psi}'(0)
\end{array}
\!\right]\! \cdot \!
\left[\!
\begin{array}{c}
a \\
b
\end{array}
\!\right]\!=\!
\left[\!
\begin{array}{c}
\hat{\psi}(\phi(0)^2)-\phi(\hat{\psi}(0)^2) \\
\hat{\psi}'(\phi(0)^2) \phi(0) \phi'(0)-\phi'(\hat{\psi}(0)^2) \hat{\psi}(0) \hat{\psi}'(0)
\end{array}
\!\right]
\end{equation}
The determinant of this linear system  is equal to
\begin{equation}\label{eq:newJac}
(\psi(0)+c)^{9} \psi'(0)=(1+c)^{9} \psi'(0).
\end{equation}
Therefore, if $(\phi,\psi)$ is small perturbation of a.c.s. factors $(\phi,\psi)$ of an a.c.s. pair $(\eta,\xi)$, then the determinant $(\ref{eq:newJac})$ is nonzero, and the set of equations $(\ref{eq:accond0})$, $(\ref{eq:accond2})$ and $(\ref{eq:newFs1})$ for the pair $\cP (\eta,\xi)$ has a unique solution $(a,b,c)$ which depends on $(\eta,\xi)$ analytically.

\end{proof}

\begin{defn}
We set $$\cRG\equiv \cP \circ \cR.$$
\end{defn}
By Propositions \ref{prop-m-invt} and \ref{prop-proj},
\begin{cor}
The operator $\cRG:\cU\to\cM(U,V)$ is analytic. Moreover, it coincides with $\cR$ on the submanifold $\cM(U,V)$.
\end{cor}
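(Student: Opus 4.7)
The plan is to prove the two assertions separately, with both following essentially from properties that have already been established.

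For analyticity, I would begin by observing that the renormalization operator $\cR$ as defined by (\ref{eq:renfactors}) is a composition of the analytic operations of pre/post-composition with fixed maps ($q_2$, $c$, and translations/dilations by the scaling factor $\lambda = \xi(0)$) together with the analytic dependence of $\lambda$ on $(\phi,\psi)$. Hence $\cR$ is an analytic operator on the open subset of $\cE(U,V)$ where the inclusions (\ref{eq:inclusion1})--(\ref{eq:inclusion3}) of Definition \ref{def:renormalizability} hold (which is open because the conditions involve compact containment). Since $\cP$ is analytic on the open neighborhood $\cU$ of $\cM(U,V)$ supplied by Proposition \ref{prop-proj}, the composition $\cRG = \cP \circ \cR$ is analytic on any open set where both are defined. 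The only subtlety is to verify that $\cR$ actually sends points of $\cU$ (possibly after shrinking it) into the neighborhood on which $\cP$ is defined; this is achieved by observing that $\cR$ restricted to $\cM(U,V)$ maps into $\cM(U,V)$ by Proposition \ref{prop-m-invt}, and then by continuity, a sufficiently small $\cU \supset \cM(U,V)$ will have $\cR(\cU)$ contained in any prescribed open neighborhood of $\cM(U,V)$.

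For the second assertion, suppose $(\eta,\xi)\in \cM(U,V)$. By Proposition \ref{prop-m-invt} applied with $k=2$, the pair $\cR(\eta,\xi)$ is again almost commuting (to order $2$). Moreover, direct inspection of (\ref{eq:renfactors}) shows $\cR(\eta,\xi)$ still factors through $q_2$ and satisfies the normalizations $\tilde\xi(0)=1$, $\tilde\eta''(0)>0$, $\tilde\xi''(0)>0$ inherited from the construction, so $\cR(\eta,\xi)\in \cM(U,V)$. Now apply $\cP$: the defining equations (\ref{eq:accond0}), (\ref{eq:accond2}), $\hat\psi(0)=1$ for $\cP(\tilde\eta,\tilde\xi)$ are already satisfied by the input with $(a,b,c)=(0,0,0)$, and by the uniqueness half of Proposition \ref{prop-proj} this is the \emph{only} solution. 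Hence $\cP$ acts as the identity on $\cM(U,V)$, and $\cRG|_{\cM(U,V)} = \cR|_{\cM(U,V)}$.

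The main (very mild) obstacle is the bookkeeping around the domains: one must be careful that the neighborhood $\cU$ from Proposition \ref{prop-proj} is compatible with the open set on which $\cR$ is defined, and that $\cR(\cU)\subset\cU$ after shrinking. Everything else is a tautology from the preceding propositions.
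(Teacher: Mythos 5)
Your proposal is correct and follows essentially the same route as the paper, which justifies the corollary simply by invoking Proposition \ref{prop-m-invt} (renormalization preserves almost commutation, so $\cR(\eta,\xi)$ already satisfies the defining equations of $\cM(U,V)$ and the uniqueness in Proposition \ref{prop-proj} forces $(a,b,c)=(0,0,0)$, i.e.\ $\cP=\mathrm{Id}$ there) together with the analyticity of $\cP$ from Proposition \ref{prop-proj} and the evident analyticity of $\cR$ on the open set of renormalizable pairs. Your additional bookkeeping about shrinking $\cU$ so that $\cR(\cU)$ lands in the domain of $\cP$ is a reasonable elaboration of what the paper leaves implicit.
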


\subsection{Statement of results}\label{sec:resultsACSP}

\begin{figure}
\centering
\vspace{2mm}       
\begin{tabular}{c c} 
\includegraphics[height=7cm,angle=-90]{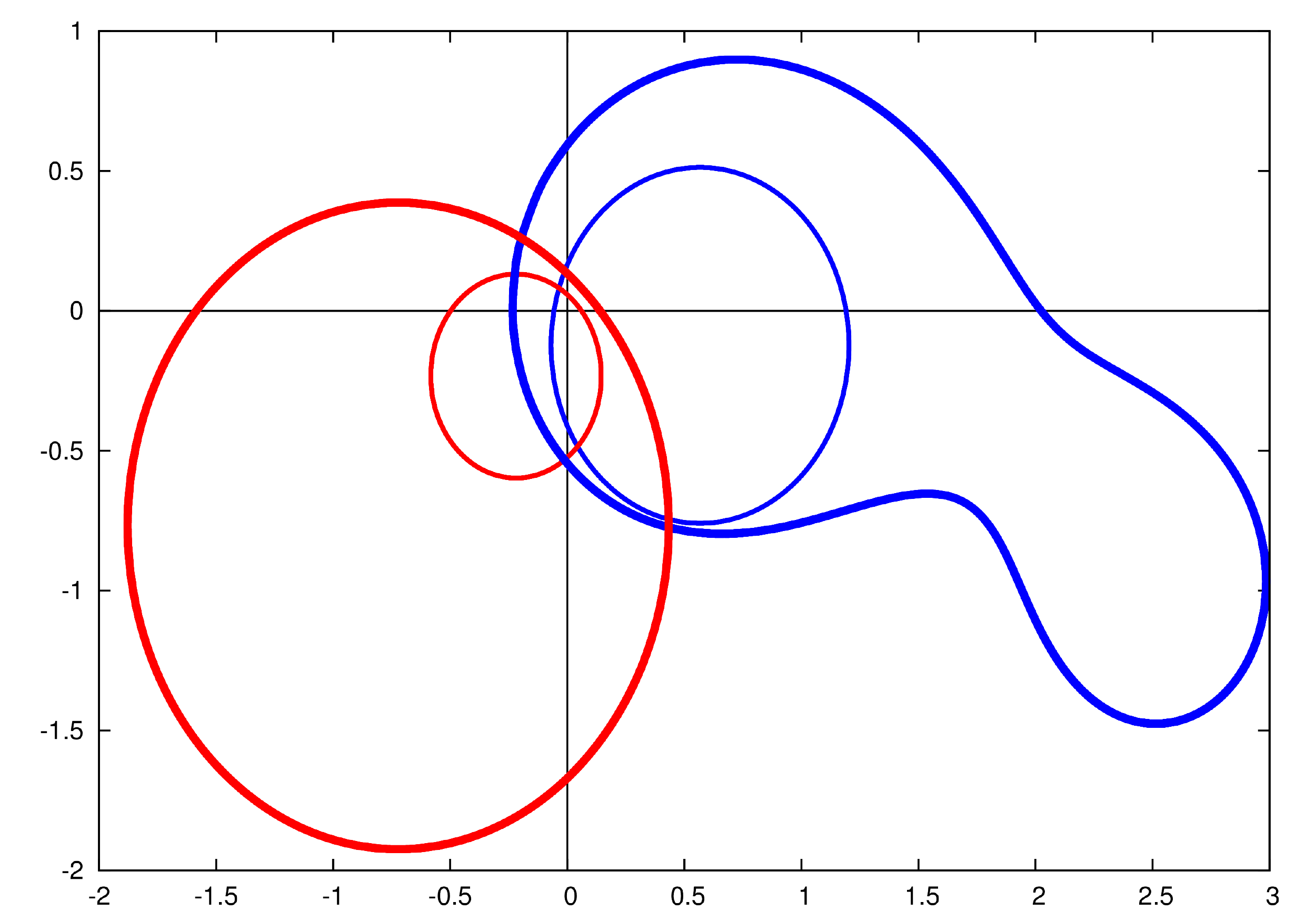} \quad & \quad \includegraphics[height=7cm,angle=-90]{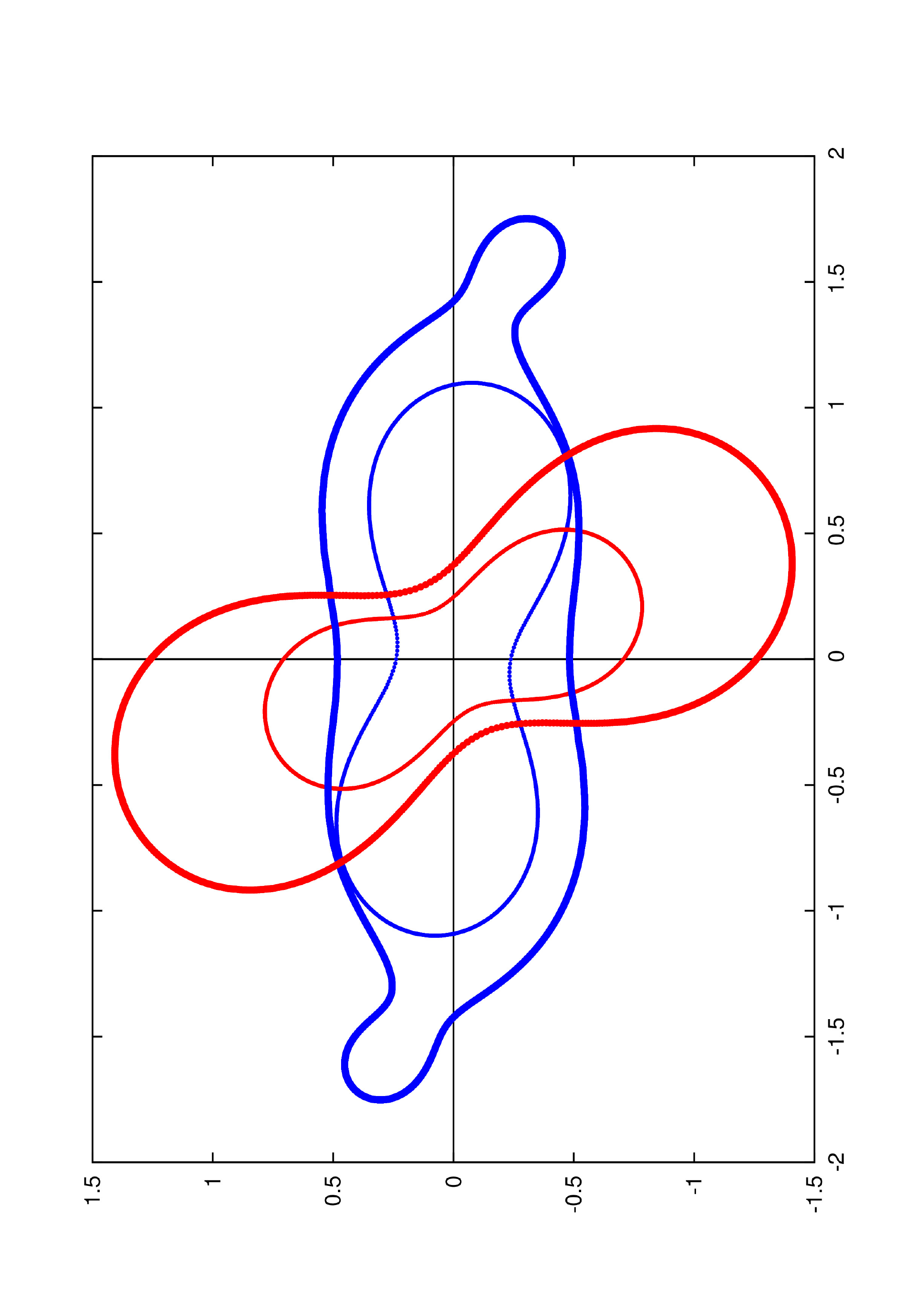}
\end{tabular}
\caption{a) Domains $U$ and $V$ (light blue and red) together with the domains of definition of the renormalization fixed point $(\phi^*,\psi^*)$ (dark blue and red); b) Domains $Z$ and $W$ (light blue and red) together with the domains for the fixed point $\zeta^*$ (dark blue and red)}
\label{fig:domains}       
\end{figure}

Let us specialize to the case when $U$ and $V$ are two disks. Hereafter an
open disk of radius $r$ centered at $z$ in $\fC$ will be denoted by $\fD_r(z)$, and the unit disk will be denoted by $\fD$.
We let
$$U=\D_{r_\phi}(c_\phi)\ni 0\text{ and  }V=\D_{r_\psi}(c_\psi)\ni 0.$$
It will be useful for us to introduce a weighted-$l_1$ Banach norm on a.c.s factors defined in $U$ and $V$ (cf. \cite{GaYa}). Namely, 
we let
 $$\phi_j\equiv\frac{\phi^{(j)}(c_\phi)r_\phi^j}{j!}\text{ and }\psi_j\equiv\frac{\psi^{(j)}(c_\psi)r_\psi^j}{j!},$$
and let 
$\cL_1$ denote the Banach space of pairs $(\phi,\psi)$ 
with the norm
$$||(\phi,\psi)||_1\equiv \sum_{i=1}^\infty \left(  | \Re \phi_i|+| \Im \phi_i|   +  |\Re   \psi_i|+|  \Im \psi_i|    \right).$$
We further  let $\cA_1(U,V)$ be the set of all a.c.s. pairs $(\phi,\psi)$ in $\cL_1$.

\begin{prop} 
\label{norm-equiv}
We note that: 
\begin{enumerate}
\item Let $(\phi,\psi)$ in $\cL_1$. Then $(\phi,\psi)$ in $\cE(U,V)$ and $$||(\phi,\psi)||\leq ||(\phi,\psi)||_1.$$

\item Let $M>1$ and let $\phi$, $\psi$ be two bounded analytic functions in $\DD_{Mr_\phi}(c_\phi)$, $\DD_{Mr_\psi}(c_\psi)$ respectively. Then
$$||(\eta,\xi)||_1<\frac{2M}{M-1}(\sup_{\DD_{Mr_\phi}(c_\phi)}|\phi|+\sup_{\DD_{Mr_\psi}(c_\psi)}|\psi|).$$
\end{enumerate}
\end{prop}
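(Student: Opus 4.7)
The plan is to observe that both statements are standard Taylor-coefficient estimates: the norm $\|\cdot\|_1$ is essentially a weighted $\ell^1$-norm on the Taylor coefficients of $\phi$ and $\psi$ expanded at $c_\phi$ and $c_\psi$, with weights $r_\phi^j$, $r_\psi^j$ chosen to equal the radii of $U$ and $V$. Passing between $\|\cdot\|_1$ and the uniform norm on $U$ and $V$ is therefore Abel summation in one direction and Cauchy's coefficient estimates in the other.

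For part (1), I would write the Taylor expansions
$$\phi(z)=\sum_{j\geq 0}\phi_j\Bigl(\tfrac{z-c_\phi}{r_\phi}\Bigr)^j,\qquad \psi(z)=\sum_{j\geq 0}\psi_j\Bigl(\tfrac{z-c_\psi}{r_\psi}\Bigr)^j.$$
On $U=\DD_{r_\phi}(c_\phi)$ the geometric factors are bounded by $1$ in modulus, so the triangle inequality gives
$$|\phi(z)|\le\sum_{j\geq 0}|\phi_j|\le\sum_{j\geq 0}\bigl(|\Re\phi_j|+|\Im\phi_j|\bigr),$$
and analogously for $\psi$ on $V$. Identifying $\cE(U,V)$ with $\cH(Z,W)$ via $q_2$ (so $\sup_{Z}|\phi\circ q_2|=\sup_{U}|\phi|$, since $q_2\colon Z\to U$ is surjective), and adding the two bounds, yields $\|(\phi,\psi)\|\le\|(\phi,\psi)\|_1$.

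For part (2), I would apply the Cauchy integral estimate on circles $|z-c_\phi|=\rho<Mr_\phi$:
$$\frac{|\phi^{(j)}(c_\phi)|}{j!}\le\frac{1}{\rho^j}\sup_{|z-c_\phi|=\rho}|\phi(z)|,$$
and let $\rho\nearrow Mr_\phi$ to obtain $|\phi_j|\le M^{-j}\sup_{\DD_{Mr_\phi}(c_\phi)}|\phi|$. Summing the geometric series from $j=1$ gives $\sum_{j\geq 1}|\phi_j|\le\frac{1}{M-1}\sup|\phi|$, and the crude bound $|\Re\phi_j|+|\Im\phi_j|\le 2|\phi_j|$ then yields
$$\sum_{j\geq 1}\bigl(|\Re\phi_j|+|\Im\phi_j|\bigr)\le\frac{2}{M-1}\sup_{\DD_{Mr_\phi}(c_\phi)}|\phi|.$$
Doing the same for $\psi$ and adding produces the bound $\frac{2}{M-1}\bigl(\sup|\phi|+\sup|\psi|\bigr)$, which is strictly less than $\frac{2M}{M-1}\bigl(\sup|\phi|+\sup|\psi|\bigr)$ because $M>1$, giving the claimed strict inequality.

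Neither step presents a real obstacle: both are routine consequences of the definition of $\|\cdot\|_1$ and Cauchy's coefficient estimates. The only bookkeeping point is verifying that the identification $\cE(U,V)\cong\cH(Z,W)$ respects the sup norm, and noting that the factor $2$ in $\frac{2M}{M-1}$ absorbs $|\Re a|+|\Im a|\le 2|a|$ while the extra multiplicative $M$ provides the strict inequality (and also a little slack if one wished to start the $\ell^1$-sum at $j=0$).
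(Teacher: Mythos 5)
The paper states this proposition without any proof, treating it as a standard remark, and your argument — triangle inequality/Abel summation for part (1) and Cauchy coefficient estimates plus the geometric series for part (2) — is exactly the routine verification the authors intend, and it is correct. The only caveats are cosmetic: the sum defining $\|\cdot\|_1$ must be read as starting at $i=0$ for part (1) to hold on constants (your closing remark about the slack provided by the extra factor $M$ covers precisely this), and the strict inequality in (2) degenerates only for the identically zero pair.
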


In the statement below we make the following specific choice of centers and radii of the domains $U$, $V$:
\begin{eqnarray}
\nonumber c_\phi&=& \phantom{+} 0.5672961438978619-0.1229664702397770 \cdot i , \quad r_\phi=0.636, \\
\nonumber  c_\psi &=&-0.2188497414079558 -0.2328147240271490 \cdot i, \quad r_\psi= 0.3640985354093064.   
\end{eqnarray}

\begin{thm} $\phantom{aaa}$\\
\label{mainthm1}
Let $U$ and $V$ be as above. Then there exists a pair of polynomials $(\phi_0,\psi_0) \in \cA_1(U,V)$
such that the following holds:
\begin{itemize}
\item[(i)] the operator $\cRG$ is anti-analytic in $B_r((\phi_0,\psi_0))$ in $\cA_1(U,V)$, $r=6.6610992 \cdot 10^{-11}$;
\item[(ii)] there exists a pair $(\phi_*,\psi_*)$ in  $B_r((\phi_0,\psi_0))$ which is fixed by $\cRG$;
\item[(iii)] the functions $\phi_*$ and $\psi_*$ extend analytically and univalently to some domains  $\tU \supset \fD_{\tr_\phi} (\tc_\phi) \Supset U$ and $\tV \supset \fD_{\tr_\psi}(\tc_\psi) \Supset V$, respectively, with $\tr_\phi=0.937$, $\tr_\psi=0.874$,   $\tc_\phi=0.6+i 0.09$ and $\tc_\psi=c_\psi$;
\item[(iv)] the differential $D\cRG|_{(\phi_*,\psi_*)}$ is a compact anti-linear operator;
\item[(v)] the compact linear operator $K\equiv  D \cRG|_{(\phi_*,\psi_*)} \circ c$ (where $c$ is complex conjugation) has a single simple 
eigenvalue outside the closed unit disk, and the rest of the spectrum lies inside the open unit disk.
\item[(vi)] the scaling factor $\lambda_*=\eta_*(0)\approx 0.220265-i0.708481.$
\end{itemize}
\end{thm}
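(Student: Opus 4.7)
The proof is computer-assisted, so the plan is to reduce each assertion to a finite set of rigorous numerical inequalities that can be verified by interval-arithmetic computation in a function space of finite-degree polynomials plus controlled analytic tails.

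I would begin by constructing an explicit polynomial pair $(\phi_0,\psi_0)$ of relatively high degree (say a few hundred Taylor coefficients about $c_\phi$, $c_\psi$) approximating the putative fixed point; the coefficients are produced by running a numerical Newton iteration for $\cRG$ and then rounded to exact rational (or machine-representable) numbers. The ball radius $r=6.66\cdot 10^{-11}$ is then chosen so that the numerical residual $\|(\phi_0,\psi_0) - \cRG(\phi_0,\psi_0)\|_1$ and the contraction estimates close up. For (i), I would verify the chain of inclusions \eqref{eq:inclusion1}--\eqref{eq:inclusion3} for every pair in $B_r((\phi_0,\psi_0))$: this amounts to bounding $\phi\circ q_2\circ \psi\circ \lambda^2 \circ c$ and $\phi\circ\lambda^2\circ c$ on the appropriate disks and checking they are contained in $Z$, $W$ with some slack. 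Anti-analyticity is automatic from the presence of the two $c$'s in the definition of $\cR$ combined with analyticity of $\cP$ (Proposition \ref{prop-proj}).

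For (ii), once $\cRG$ is anti-analytic on $B_r((\phi_0,\psi_0))$, the map $\cRG\circ c$ is analytic, and I would apply a Newton--Kantorovich / contraction mapping argument to the operator $N(\zeta) = \zeta - (I - D(\cRG\circ c)|_{(\phi_0,\psi_0)})^{-1}(\zeta - (\cRG\circ c)(\zeta))$. Concretely, one bounds (a) the initial residual $\|(\phi_0,\psi_0)-\cRG(\phi_0,\psi_0)\|_1$, (b) the operator norm of the inverse of the approximate derivative $I-DK_0$ on the truncated coordinate subspace, with a rigorous tail bound from Proposition \ref{norm-equiv}(2), and (c) the Lipschitz constant of $D(\cRG\circ c)$ on $B_r$. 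Feeding these three numbers into the standard Kantorovich inequality produces an honest fixed point $(\phi_*,\psi_*)\in B_r$. The extension in (iii) follows by observing that the formulas \eqref{eq:renpairs}, \eqref{eq:renfactors} already define $\cRG(\phi_*,\psi_*)=(\phi_*,\psi_*)$ on compositions that only see values of $\phi_*,\psi_*$ near the origin; one chases through the composition to see that $(\phi_*,\psi_*)$ is actually given on the larger disks $\fD_{\tr_\phi}(\tc_\phi)$, $\fD_{\tr_\psi}(\tc_\psi)$ by analytic continuation through the functional equation. Univalence on these enlarged disks is verified by a rigorous argument principle / winding number computation for $\phi_*',\psi_*'$ along the boundary.

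For (iv), the crucial input is (iii): because $(\phi_*,\psi_*)$ extends analytically to $\tU\Supset U$, $\tV\Supset V$, the differential $D\cRG|_{(\phi_*,\psi_*)}$ factors through the compact inclusion $\cH(\tU,\tV)\hookrightarrow \cH(U,V)$, so it is compact; anti-linearity again comes from the two $c$'s. For (v), I would reduce the spectral problem for the compact operator $K=D\cRG|_{(\phi_*,\psi_*)}\circ c$ to a finite-dimensional matrix problem. Write $\cL_1 = E_N \oplus E_N^{\perp}$, where $E_N$ is the subspace of coordinate-degree at most $N$. In block form $K=\begin{pmatrix} A & B \\ C & D\end{pmatrix}$; using the weighted-$\ell_1$ norm together with Cauchy estimates on $\tU,\tV$, the blocks $B$, $C$, $D$ are bounded by an explicit small constant $\varepsilon_N$. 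A Schur-complement / Gershgorin argument then shows that the spectrum of $K$ outside the disk of radius $\varepsilon_N$ is within $O(\varepsilon_N)$ of the spectrum of the finite matrix $A$. I would then perform a rigorous eigenvalue enclosure on $A$ (e.g.\ by verifying a rigorous disk around each approximate eigenvalue via Rouch\'e applied to $\det(zI-A)$) to count exactly one eigenvalue outside $\overline{\fD}$ and to confirm the rest lie inside $\fD$. Finally, $\lambda_*=\eta_*(0)=\phi_*(0)$ is read off from the known enclosure of $\phi_*$ in $B_r((\phi_0,\psi_0))$, giving the value in (vi). The main obstacle throughout is the rigorous control of the truncation tails, both in the fixed-point argument and in the spectral reduction; everything else reduces to bookkeeping once the domains $U,V,\tU,\tV$ have been chosen so that the nested compositions defining $\cRG$ stay inside the larger disks with explicit margins.
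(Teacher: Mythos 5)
Your overall plan is the right kind of proof and matches the paper in spirit: an explicitly computed polynomial pair $(\phi_0,\psi_0)$, a rigorous Newton/contraction argument in the weighted-$\ell_1$ space with controlled truncation tails, compactness of $D\cRG$ deduced from the analytic extension of the fixed point to the larger disks, and computer-verified spectral bounds. However, your step for (ii) contains a genuine error: the Newton--Kantorovich scheme you write down is Newton's method for the equation $\zeta=(\cRG\circ c)(\zeta)$, i.e.\ $\cRG(\bar\zeta)=\zeta$, which is \emph{not} the fixed-point equation for $\cRG$. Since the fixed point is genuinely non-real (e.g.\ $\lambda_*\approx 0.22-0.71i$, so $c(\zeta_*)\neq\zeta_*$), fixed points of $\cRG\circ c$ are not fixed points of $\cRG$, and the residual $\|(\phi_0,\psi_0)-(\cRG\circ c)(\phi_0,\psi_0)\|_1$ is not small, so the Kantorovich hypotheses either fail or certify the wrong object. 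The paper resolves the anti-linearity problem differently: it keeps $\cRG$ itself as the map in the Newton operator $N_{\cRG,L_0}$, but regards $\cA_1(U,V)$ as a \emph{real} Banach space $\tilde\cA_1(U,V)$ (splitting real and imaginary parts of the coefficients), so that the approximate derivative $L_0$ and $[\I-L_0]^{-1}$ are honest real matrices; the composition with $c$ is used only at the level of the derivative when discussing the spectrum (the operator $K=D\cRG|_{(\phi_*,\psi_*)}\circ c$). A correct variant of your idea would be to contract the analytic operator $\cRG^2$ and then use uniqueness of its fixed point in the ball to conclude invariance under $\cRG$, but as written your scheme does not produce $(\phi_*,\psi_*)$.

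For part (v) your route is also genuinely different from the paper's. You propose a finite-rank block reduction of $K$ with Cauchy-estimate tail bounds and a rigorous Rouch\'e/Gershgorin enclosure of the eigenvalues of the truncated matrix. The paper never performs a rigorous eigenvalue enclosure: the existence of an expanding direction is proved by an invariant cone-field argument for the operator $\cR_*$ with frozen scaling factor (Lemma \ref{lem:conebounds} and Proposition \ref{prop:unstable_dir}), which relies on classical distortion and rotation bounds for univalent functions plus a few computer-evaluated constants, together with Lemma \ref{difference-spectrum} relating $\cR_*$ and $\cR$; the complementary part of the spectrum is controlled by verifying $\|T^3\|<1$ for the differential projected off the two approximate unstable \emph{real} directions, and the ``single simple eigenvalue'' count then follows from the codimension-two real stable subspace together with the $\pm\lambda$ pairing of Lemma \ref{lem:spectra}. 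Your approach is workable in principle (it is the standard Koch--Schenkel--Wittwer-type strategy), but it must be carried out for the complex-linear operator $K$ (not in the realified space, where every complex eigenvalue appears as a pair) and it hinges on making the tail constant $\varepsilon_N$ rigorously small, which is the hard part you acknowledge; the paper's cone-field argument buys existence of the unstable eigenvalue with much lighter numerics, at the cost of a more elaborate pencil-and-paper construction. Your treatments of (i), (iii) (continuation through the functional equation, cf.\ Proposition \ref{prop:renorm_compactness}), (iv) and (vi) are consistent with the paper.
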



\noindent
The following was proved in \cite{Bur}:
\begin{prop}
\label{fixed-pt-commute}
The renormalization fixed point $\zeta_*=(\eta_*,\xi_*)$ is commuting:
$$\eta_*\circ\xi_*=\xi_*\circ\eta_*,$$
where defined.
\end{prop}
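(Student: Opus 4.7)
The plan is to upgrade the almost-commuting identity $[\eta_*,\xi_*] = O(z^3)$ (which follows from Proposition~\ref{prop-m-invt} together with the a.c.\ conditions) to the full identity $\eta_* \circ \xi_* = \xi_* \circ \eta_*$ on the common domain of definition. The strategy is to exploit the fixed-point relations
$$\eta_* = c \circ \lambda_*^{-1} \circ \eta_* \circ \xi_* \circ \lambda_* \circ c, \quad \xi_* = c \circ \lambda_*^{-1} \circ \eta_* \circ \lambda_* \circ c$$
coming from (\ref{eq:renpairs}) to derive a functional equation for the commutator, and then to analyze its order of vanishing at the origin.

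Composing the fixed-point identities and using $c \circ c = \id$ together with $\lambda_* \cdot \overline{\overline{\lambda_*^{-1}\eta_*(\cdot)}} = \eta_*(\cdot)$ yields
$$\eta_* \circ \xi_*(z) = \overline{\lambda_*^{-1}\, \eta_*(\xi_*(\eta_*(\lambda_* \bar z)))}, \quad \xi_* \circ \eta_*(z) = \overline{\lambda_*^{-1}\, \eta_*(\eta_*(\xi_*(\lambda_* \bar z)))}.$$
Subtracting these and applying the integral mean-value identity $\eta_*(a)-\eta_*(b) = (a-b) \int_0^1 \eta_*'(b+t(a-b))\,dt$ with $a = \xi_*(\eta_*(w))$, $b = \eta_*(\xi_*(w))$, and $w = \lambda_*\bar z$ produces the functional equation
$$[\eta_*,\xi_*](z) = -\overline{\lambda_*^{-1}\, G_*(\lambda_* \bar z)\, [\eta_*,\xi_*](\lambda_* \bar z)},$$
where
$$G_*(w) = \int_0^1 \eta_*'\bigl(\eta_*(\xi_*(w)) - t\,[\eta_*,\xi_*](w)\bigr) \, dt$$
is analytic near $0$ with $G_*(0) = \eta_*'(\eta_*(1))$, using $\xi_*(0) = 1$.

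Now I would argue by contradiction: assume $[\eta_*,\xi_*] \not\equiv 0$ and let $k \geq 3$ be its order of vanishing at $0$, so $[\eta_*,\xi_*](z) = a z^k + O(z^{k+1})$ with $a \neq 0$. Matching the $z^k$ coefficients on both sides of the functional equation gives
$$a = -\bar\lambda_*^{k-1}\overline{G_*(0)}\, \bar a,$$
hence $|\lambda_*|^{k-1}|G_*(0)| = 1$. Since $|\lambda_*| \approx 0.742 < 1$ by Theorem~\ref{mainthm1}(vi), the exceptional moduli $|\lambda_*|^{-(k-1)}$ form a strictly increasing sequence diverging to infinity, so at most one integer $k \geq 3$ can realize such a resonance.

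The main obstacle is ruling out this single potentially resonant integer. Using the rigorous enclosures of $\eta_*$, $\eta_*'$, and $\lambda_*$ guaranteed by Theorem~\ref{mainthm1}(iii),(vi) (and noting $1 \in U$, so $\eta_*(1) = \phi_*(1)$ is safely evaluated), one bounds $|G_*(0)| = |\eta_*'(\eta_*(1))|$ via interval arithmetic and verifies that it is strictly separated from every $|\lambda_*|^{-(k-1)}$, $k \geq 3$. Once this finite verification succeeds, the leading coefficient $a$ must vanish for every admissible $k$, so $[\eta_*,\xi_*]$ has a zero of infinite order at $0$; being analytic on the connected common domain of definition, it must therefore vanish identically there.
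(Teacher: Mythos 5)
Your functional equation and coefficient comparison are, in essence, exactly the paper's own argument: the paper also assumes $[\eta_*,\xi_*](z)=az^k+o(z^k)$ with $a\neq 0$, $k\ge 3$, rewrites the commutator of the renormalized (i.e.\ the same) pair as $c\circ\lambda_*^{-1}\circ\bigl(\eta_*(\xi_*\circ\eta_*)-\eta_*(\eta_*\circ\xi_*)\bigr)\circ\lambda_*\circ c$, and matches leading coefficients, which produces precisely your relation $|a|=|\lambda_*|^{k-1}\,|G_*(0)|\,|a|$ with $G_*(0)=\eta_*'(\eta_*\circ\xi_*(0))$. The divergence --- and the genuine gap in your proposal --- is in how this constant is handled. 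The paper does not estimate it numerically: it computes it exactly. Differentiating the fixed-point equation in factored form, $\eta_*\circ\psi_*(z)=c\circ\lambda_*^{-1}\circ\eta_*\circ\xi_*\circ\phi_*(\lambda_*^2\bar z)$, at $z=0$, and using the order-two almost-commutation relation (equality of (\ref{eq:2etaxider}) and (\ref{eq:2xietader})) together with the normalizations $\eta_*''(0)>0$, $\xi_*''(0)>0$ from (\ref{eq:accond}), gives $\eta_*'(\eta_*\circ\xi_*(0))=\lambda_*^{-1}$; in particular $|G_*(0)|=|\lambda_*|^{-1}$, which is all the modulus comparison requires. Your resonance condition $|\lambda_*|^{k-1}|G_*(0)|=1$ then reads $|\lambda_*|^{k-2}=1$, impossible for every $k\ge 3$ since $|\lambda_*|<1$. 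All orders are excluded simultaneously, by a two-line computation, with no numerics.

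As written, your proof is incomplete precisely at the step you yourself flag as the main obstacle: the separation of $|G_*(0)|$ from the moduli $|\lambda_*|^{-(k-1)}$ is only promised, not carried out, and the ``rigorous enclosures of $\eta_*'$'' you invoke are not literally among the conclusions of Theorem~\ref{mainthm1} --- they would have to be derived from the weighted $\ell_1$ enclosure and the extension statement in part (iii) via Cauchy-type estimates, together with a check that the point $\eta_*\circ\xi_*(0)=\eta_*(1)$ lies comfortably inside the region where those estimates are valid. That verification would in fact succeed (the true value $|G_*(0)|=|\lambda_*|^{-1}\approx 1.35$ sits well below $|\lambda_*|^{-2}\approx 1.8$), so your plan is salvageable, but the exact identity above makes the entire interval-arithmetic apparatus unnecessary and is the actual content of the paper's proof. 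The rest of your argument --- the derivation of the functional equation from the fixed-point relations, the coefficient matching, and the passage from infinite-order vanishing at $0$ to identical vanishing on the connected common domain --- is correct and coincides with the paper's reasoning.
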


\begin{proof}

Let us assume the contrary, so that
$$[\eta_*,\xi_*](z) = az^k+o(z^k),\text{ where }k\geq 3\text{ and }a\neq 0.$$
Let us first calculate the constant 
$$C=\eta_*'(\eta_*\circ \xi_*(0)).$$
To that end, we start with one of the two fixed point equations for $\zeta_*$ in the form
$$\eta_*\circ\psi_*(z)=c\circ \lambda_*^{-1}\circ\eta_*\circ\xi_* \circ\phi_*(\lambda_*^2\bar{z}).$$
Differentiating the above equation, and using the equality of the expressions (\ref{eq:2etaxider}) and (\ref{eq:2xietader}),
we obtain $$C=\lambda_*^{-1}.$$
Now, write
\begin{eqnarray}
\nonumber [c\circ \! \lambda_*^{-1}\eta_*\circ\xi_* \circ \lambda_* \circ c,c \circ \! \lambda_*^{-1}\eta_* \circ \lambda_* \circ c](z)\!\!&\!\!=\!\!&\!\!c\circ \! \lambda_*^{-1}\eta_*(\xi_*\circ\eta_*(\lambda_* \bar z))-c\circ \! \lambda_*^{-1}\eta_*(\eta_*\circ\xi_*(\lambda_* \bar z))\\
\nonumber \!\!& \!\! = \!\!& \!\!c(\lambda_*^{-1}C a (\lambda_*\bar z)^k)+o((\lambda_*\bar z)^k) \\
\nonumber \!\!& \!\! = \!\!& \!\!\bar{a} \bar{\lambda}_*^{k-2} z^k+o((\lambda_*\bar z)^k).
\end{eqnarray}
Since $$|\lambda_*^{k-2}|\leq|\lambda_*| < 1,$$
we have arrived at a contradiction.
\end{proof}

\section{Renormalization for almost commuting holomorphic pairs}\label{sec:ACM}
We are now going to use Main Theorem 1 to extend our hyperbolicity result to a.c. pairs in $\cB(Z,W)$.

Let us state several useful lemmas first.
\begin{lemma}
\label{lem-cpt1}
Let $Z'\Supset Z$, $W'\Supset W$, $a\in Z$, $b\in W$, and $C>0$. Let $S$ denote the subset of $\cH(Z',W')$ consisting of the pairs $(\eta,\xi)$
with $|\eta'(a)|<C$ and $|\xi'(b)|<C$. Then $S$ is pre-compact in $\cH(Z,W)$.
\end{lemma}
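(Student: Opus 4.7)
The plan is to prove the lemma via a standard Montel-style normal-family argument, exploiting the strict containments $Z\Subset Z'$ and $W\Subset W'$. Given a sequence $(\eta_n,\xi_n)\in S$, the aim is to extract a subsequence convergent in the uniform norm of $\cH(Z,W)$, which is what precompactness demands.

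First I would upgrade the pointwise derivative bounds $|\eta_n'(a)|<C$ and $|\xi_n'(b)|<C$ into uniform sup-norm bounds on $\cl(Z)$ and $\cl(W)$. Choosing intermediate domains $Z\Subset Z''\Subset Z'$ and $W\Subset W''\Subset W'$, I would combine the derivative bound at $a$ with the normalization inherent in the ambient Banach space $\cH(Z',W')$ (namely, control of the functions at a basepoint, such as the implicit normalization of elements in the space), and integrate $\eta_n'$ along paths in $Z''$ to obtain a uniform bound $\sup_{\cl(Z)}|\eta_n|\le C'$, and analogously $\sup_{\cl(W)}|\xi_n|\le C'$. Once these uniform sup-norm bounds are in hand, Cauchy estimates applied on disks contained in $Z'\setminus \cl(Z)$ (resp.\ $W'\setminus\cl(W)$) yield equicontinuity of the families $\{\eta_n|_{\cl(Z)}\}$ and $\{\xi_n|_{\cl(W)}\}$. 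Arzel\`a--Ascoli then produces a subsequence converging uniformly on $\cl(Z)$, and a further subsequence converging uniformly on $\cl(W)$. The uniform limits are holomorphic on $Z$ and $W$ by Weierstrass' theorem, and hence belong to $\cH(Z)$ and $\cH(W)$; the convergence therefore takes place in the norm of $\cH(Z,W)$, proving precompactness.

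The main obstacle I foresee is the first step — converting a single pointwise derivative bound into a uniform sup-norm bound on the larger set $\cl(Z)$. This is precisely where the strict containment $Z\Subset Z'$ is essential: it provides the buffer necessary for the integration-plus-Cauchy-estimate argument, and it is the reason the lemma is stated on a pair of nested domains rather than a single domain. The remaining ingredients (equicontinuity via Cauchy estimates on the gap region, subsequence extraction via Arzel\`a--Ascoli, and holomorphy of the uniform limit via Weierstrass' theorem) are completely classical and apply routinely once uniform bounds are secured.
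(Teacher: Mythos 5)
There is a genuine gap, and it sits exactly where you yourself flag the ``main obstacle'': the passage from the single pointwise bound $|\eta_n'(a)|<C$ to a uniform bound $\sup_{\cl(Z)}|\eta_n|\le C'$. Your proposed mechanism --- integrating $\eta_n'$ along paths in an intermediate domain $Z''$, helped by ``the normalization inherent in the ambient Banach space $\cH(Z',W')$'' --- does not exist as described. First, $\cH(Z',W')$ carries no basepoint normalization: its elements are arbitrary bounded holomorphic functions, and membership in $S$ gives no bound whatsoever on $|\eta_n(a)|$ or on $\|\eta_n\|$. Second, and more fundamentally, to integrate $\eta_n'$ you need a bound on $\eta_n'$ throughout $Z''$, whereas the hypothesis controls it only at the single point $a$; for general holomorphic functions a derivative bound at one point propagates nowhere. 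Concretely, $\eta_n(z)=n(z-a)^2$ (or the constants $\eta_n\equiv n$) lie in the family you are working with, satisfy $\eta_n'(a)=0<C$, and admit no subsequence converging uniformly on $Z$. So a bare Montel/Arzel\`a--Ascoli scheme cannot close; some additional structural input is indispensable.

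That input is what the paper's (one-line) proof supplies: the Koebe Distortion Theorem, applied to the maps in question regarded as univalent on the larger domains (this is how the lemma is used in the renormalization setting, where the relevant maps and changes of variable are univalent and carry normalizations of their values). Koebe is precisely the tool that upgrades the pointwise bound $|\eta'(a)|<C$ to $|\eta'(z)|\le K\,C$ on $\cl(Z)$, with $K$ depending only on the geometry of $Z\Subset Z'$, and hence bounds $\operatorname{diam}\eta(Z)$; combined with control of the value at a point this yields the uniform bounds, after which equicontinuity (your Cauchy-estimate step, applied on disks centered in $\cl(Z)$ and contained in $Z'$) and Arzel\`a--Ascoli finish the argument exactly as you outline. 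In short: the tail of your argument is fine, but its first step is not derivable from the stated hypotheses alone, and the missing ingredient is the distortion theorem that the paper invokes.
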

\begin{proof}
This is an immediate corollary of Koebe Distortion and Arzel{\`a}-Ascoli Theorems.
\end{proof} 

The next statement is a direct analogue of Proposition \ref{prop-proj}:
\begin{prop}
\label{prop-proj2}
There exists an open neighborhood $\cU$ of $\cB(Z,W)$ in $\cC(Z,W)$ such that the following holds.
Suppose that a pair of maps $(\eta,\xi) \in \cU$. Then there exists a unique triple of complex numbers $a$, $b$, $c$ such that 
\begin{eqnarray}
\cP (\eta,\xi)(z)\equiv\left( \eta(z)+a z^4+b z^6, \xi(z)+c \right)\in\cB(Z,W).
\end{eqnarray}
Moreover, the dependence of the numbers $a$, $b$, and $c$ on $(\eta,\xi)\in\cU$ is analytic.
\end{prop}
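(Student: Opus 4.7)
The plan is to mimic Proposition \ref{prop-proj} directly: I would solve for the three free parameters $(a,b,c)$ from the three scalar conditions defining $\cB(Z,W)$ inside $\cC(Z,W)$, namely $\hat\xi(0)=1$ together with the order-0 and order-2 commutator identities. The order-1 identity is automatic because the perturbations preserve the vanishing derivatives $\hat\eta'(0)=\hat\xi'(0)=0$. Likewise, $az^4+bz^6$ and $c$ leave $\eta(0)$, $\eta''(0)$ and $\xi''(0)$ untouched, so $\hat\zeta=(\hat\eta,\hat\xi)$ remains in $\cC(Z,W)$ and the positivity conditions $\eta''(0),\xi''(0)>0$ persist for small perturbations.

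Setting $c=1-\xi(0)$ handles the normalization. With $c$ fixed, expanding second derivatives via the chain and product rules, and using $\hat\xi(0)=1$, $\hat\eta(1)=\eta(1)+a+b$, $\hat\eta'(1)=\eta'(1)+4a+6b$, the remaining two identities reduce to the linear system
\begin{equation*}
\begin{pmatrix} 1 & 1 \\ 4\,\xi''(0) & 6\,\xi''(0) \end{pmatrix}\begin{pmatrix} a \\ b \end{pmatrix} = \begin{pmatrix} \xi(\eta(0))+c-\eta(1) \\ \xi'(\eta(0))\,\eta''(0) - \eta'(1)\,\xi''(0) \end{pmatrix},
\end{equation*}
using $(\hat\eta\circ\hat\xi)''(0)=\hat\eta'(1)\,\hat\xi''(0)$ and $(\hat\xi\circ\hat\eta)''(0)=\hat\xi'(\eta(0))\,\eta''(0)$ (both second derivatives simplify drastically because $\hat\eta'(0)=\hat\xi'(0)=0$). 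The determinant of the matrix on the left is $2\,\xi''(0)$, which is nonzero on a sufficiently small open neighborhood $\cU$ of $\cB(Z,W)$ in $\cC(Z,W)$ since $\xi''(0)>0$ throughout $\cB(Z,W)$.

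Cramer's rule then yields $(a,b,c)$ explicitly as a rational expression in the point-evaluation functionals $\eta(0)$, $\eta(1)$, $\eta'(1)$, $\eta''(0)$, $\xi(0)$, $\xi(\eta(0))$, $\xi'(\eta(0))$, $\xi''(0)$. Each such functional is analytic on $\cH(Z,W)$ by Cauchy estimates applied on slightly larger domains, so $(a,b,c)$ depends analytically on $(\eta,\xi)\in\cU$. The only obstacle worth flagging is the non-degeneracy of the $2\times 2$ system; this is handled uniformly by the strict positivity of $\xi''(0)$ inherited from the definition of $\cB(Z,W)$ together with continuity on $\cU$, after which the entire argument is a transparent translation of the proof of Proposition \ref{prop-proj} with the homogeneity exponents shifted from $(2,3)$ to $(4,6)$ and without the factorization through $q_2$.
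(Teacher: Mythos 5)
Your proposal is correct and follows essentially the same route as the paper's own proof: set $c=1-\xi(0)$, reduce the order-$0$ and order-$2$ commutation conditions (the order-$1$ one being automatic) to a linear system in $(a,b)$ whose determinant is $2\xi''(0)\neq 0$ near $\cB(Z,W)$, and conclude unique solvability with analytic dependence; the paper merely divides the second equation by $\xi''(0)$ so its matrix is $\bigl(\begin{smallmatrix}1&1\\4&6\end{smallmatrix}\bigr)$ with determinant $2$. The extra details you supply (explicit chain-rule simplifications, Cramer's rule, analyticity of point evaluations) are consistent with, and just flesh out, the paper's argument.
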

\begin{proof}
The computation is even simpler than before. Again, 
\begin{equation}\label{eq:newFs1a}
c=1-\xi(0).
\end{equation}
Furthermore, the commutation conditions reduce to:
$$\left\{ \begin{array}{c}a+b=\xi(\eta(0)) +c -\eta(1),\\
4a+6b=\xi'(\eta(0))\eta''(0)/\xi''(0)-\eta'(1)\end{array}\right.$$
Therefore, if $(\phi,\psi)$ is a small perturbation of  an a.c. pair $(\eta,\xi)$, then this set of equations has a unique solution $(a,b,c)$ which depends on $(\eta,\xi)$ analytically.
\end{proof}

To define renormalization of pairs $(\eta,\xi)\in\cC(Z,W)$ without a condition of quadratic symmetry, we note that every 
such pair can be decomposed as 
\begin{equation}\label{def:acm}
\zeta=(\eta,\xi)=(\phi \circ q_2 \circ \alpha,  \psi \circ q_2 \circ \beta),
\end{equation}
where $\alpha$ and $\beta$ are holomorphic on $Z$ and $W$ respectively, and satisfy 
\begin{equation}\label{def:tangenttox}
\alpha(x)=x+O(x^2), \quad \beta(x)=x+O(x^2).
\end{equation}
Let us denote $\cW(Z)$ the space of holomoprhic maps in $Z\ni 0$ with the property $\alpha(x)=x+O(x^2)$ -- this is clearly an (affine) Banach submanifold of the space $\cH(Z)$.
Let $\cU(Z)\subset \cW(Z)$ consist of maps with $|\alpha'(z)|>0$ for $z\in Z$, and let $\cU(Z,W)\equiv \cU(Z)\times\cU(W)$.
Following Martens \cite{Mar}, we introduce:

\begin{definition}
The {\it nonlinearity operator} $N: \cU(Z) \mapsto \cH(Z)$
is defined as
$$N[\alpha]=D \log{  D \alpha }={\alpha'' \over \alpha'},$$
so that $N[\alpha](x)$ is the nonlinearity of $\alpha$ at the point $x$.
\end{definition}

\noindent
The following Lemma results from a straightforward computation.

\begin{lemma}
The nonlinearity operator $N:\cU(Z)\to N(\cU(Z))$ is a bijection onto its image, the inverse being defined as 
\begin{equation}\label{inv_non_op}
N^{-1}[\alpha](x)= \int_0^x \exp\left\{\int_0^z\alpha(w) d w  \right\} d z,
\end{equation}
where the integrals are taken along any Jordan arc contained in $Z$ with the endpoints $0$ and $x$.
\end{lemma}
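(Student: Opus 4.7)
The plan is to recognize that the nonlinearity is a logarithmic derivative, which makes inversion essentially a double antidifferentiation. The conditions $\alpha(0)=0$ and $\alpha'(0)=1$ imposed by $\alpha(x)=x+O(x^2)$, combined with the non-vanishing of $\alpha'$ on $Z$, fix the two constants of integration uniquely.

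First I would observe the identity
\[
N[\alpha](z) \;=\; \frac{\alpha''(z)}{\alpha'(z)} \;=\; \bigl(\log \alpha'(z)\bigr)',
\]
valid on $Z$ because $\alpha'$ is a nowhere-vanishing holomorphic function on the simply connected domain $Z$, so a single-valued branch of $\log \alpha'$ exists. Integrating along any arc from $0$ to $z$ in $Z$ and using $\log\alpha'(0)=\log 1 = 0$ gives
\[
\log \alpha'(z) \;=\; \int_0^z N[\alpha](w)\,dw,
\]
hence $\alpha'(z)=\exp\{\int_0^z N[\alpha](w)\,dw\}$; a second integration with $\alpha(0)=0$ produces precisely the formula
\[
\alpha(x) \;=\; \int_0^x \exp\!\left\{\int_0^z N[\alpha](w)\,dw\right\} dz,
\]
which shows that $N$ is injective on $\cU(Z)$ and identifies the inverse on its image.

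Conversely, given any $\beta$ in the image $N(\cU(Z))$, set $\tilde\alpha(x):=\int_0^x\exp\{\int_0^z \beta(w)\,dw\}\,dz$. Since $Z$ is simply connected and $\beta$ is holomorphic, both iterated integrals are path-independent, so $\tilde\alpha\in\cH(Z)$. By construction $\tilde\alpha(0)=0$ and $\tilde\alpha'(0)=\exp(0)=1$, so $\tilde\alpha\in\cW(Z)$; moreover $\tilde\alpha'(z)=\exp\{\int_0^z\beta\,dw\}$ never vanishes, so $\tilde\alpha\in\cU(Z)$. A direct computation then yields
\[
N[\tilde\alpha](z) \;=\; \frac{\tilde\alpha''(z)}{\tilde\alpha'(z)} \;=\; \bigl(\log\tilde\alpha'(z)\bigr)' \;=\; \beta(z),
\]
so the right-hand side of \eqref{inv_non_op} really is a two-sided inverse of $N$ on its image.

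There is no substantive obstacle here; the only minor point requiring care is the path-independence of the inner and outer integrals, which is handled by the simple connectivity of the topological disk $Z$ together with holomorphy of the integrands. In particular, neither the image $N(\cU(Z))$ nor the Banach-space structure of $\cH(Z)$ plays any role in the verification — the statement is a pointwise identity derived from the basic ODE $y'=\beta y$ with the normalization $y(0)=1$.
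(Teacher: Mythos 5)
Your argument is correct and is exactly the ``straightforward computation'' that the paper omits: writing $N[\alpha]=(\log\alpha')'$, using simple connectivity of the topological disk $Z$ and the normalizations $\alpha(0)=0$, $\alpha'(0)=1$ to fix the branch and the constants, and integrating twice, with the converse check that the formula is a two-sided inverse. Nothing is missing.
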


\noindent
Let us denote $$\cS(Z)\equiv N^{-1}(\cH(Z))\subset \cU(Z)\text{, and }\cS(Z,W)\equiv \cS(Z)\times\cS(W).$$
The identification of $\cS(Z)$ with its image under $N$ induces a Banach space structure with the 
{\it nonlinearity norm}:
\begin{equation}
\label{n_lnorm}\|\alpha\|_N=\sup_{x \in Z} \left|N[\alpha](x) \right|\text{ for }(\alpha,\beta)\in \cS(Z),
\end{equation}
and a linear structure given by 
\begin{equation} 
\label{nl_lin_struct} a \pmb{\cdot} \alpha \pmb{+} b \pmb{\cdot} \beta=N^{-1}[a  N[\alpha]+b  N[\beta]],\text{ for }a,b\in\CC.
\end{equation}

For $\eps>0$ let us denote $\cS(Z,W,\eps)$ the $\eps$-ball
$$\cS(Z,W,\eps)=\{(\alpha,\beta)\in\cS(Z,W)\;|\;\max(||\alpha||_N,||\beta||_N)<\eps\}.$$
Let $U$, $V$ be two subdomains of $\CC$ and let $$Z\Supset q_2^{-1}(U),\;W\Supset q_2^{-1}(V).$$
Let $\eps>0$ be sufficiently small, so that for every $(\alpha,\beta)\in \cS(Z,W,\eps)$ we have 
\begin{equation}\label{eq:domains}
\alpha(Z)\Supset q_2^{-1}(U),\;\beta(W)\Supset q_2^{-1}(V).
\end{equation} 
We denote $$\cE(U,V,Z,W,\eps)\equiv \cE(U,V)\times \cS(Z,W,\eps).$$
Every pair $(\phi,\psi)\in\cE(U,V)$, $(\alpha,\beta)\in \cS(Z,W,\eps)$  naturally corresponds to  
\begin{equation}
\label{comp-pair}
\zeta=(\eta,\xi)\equiv (\phi\circ q_2\circ \alpha,\psi\circ q_2\circ \beta),
\end{equation}
and we will use the notations interchangeably.
We let $\cM(U,V,Z,W,\eps)\subset\cE(U,V,Z,W,\eps)$ be the subset consisting of almost commuting pairs (\ref{comp-pair}).

Similarly, we let
$$\cL_1(U,V,Z,W,\eps)\equiv \cL_1(U,V)\times \cS(Z,W,\eps),$$
and again identify a point in this Banach manifold with a pair $(\eta,\xi)$ via (\ref{comp-pair}). 
Consider a natural metric on this product  given by 
\begin{equation}
\label{eq:unorm}d_{1,N}(\zeta_1,\zeta_2)=\|(\phi_1,\psi_1)-(\phi_2,\psi_2)\|_1+\|(\alpha_1,\beta_1) \pmb{-} (\alpha_2,\beta_2)\|_N.
\end{equation}
We let $\cA(U,V,Z,W,\eps)\subset\cL_1(U,V,Z,W,\eps)$ be the subset consisting of almost commuting pairs.

Recall, that for any two $\alpha_i$, $i=1,2$, in  $S(Z)$,
\begin{equation}\label{n3a}
e^{-\| \alpha_1  \pmb{-} \alpha_2 \|_N } \le \left| { \alpha_1'(x) \over  \alpha_2'(x)} \right|  \le e^{\| \alpha_1 \pmb{-} \alpha_2 \|_N } \quad  {\rm and} \quad  {1 \over C} e^{-\| \alpha_i \|_N } \le \left| \alpha_i'(x) \right| \le C e^{\| \alpha_i \|_N },
\end{equation}
where $C$ depends only on the domain $Z$ and its image $\alpha_i(Z)$ (notice, that the difference  $\alpha_1  \pmb{-} \alpha_2$ above is defined as $(\ref{nl_lin_struct})$). Therefore,
\begin{equation}\label{n3}
|\alpha_1(z)-\alpha_2(z)|=\left| \int_0^z \alpha_1'(z) \left(1-{\alpha_2'(z) \over \alpha_1'(z) }  \right)  d z  \right| \le  C  e^{\| \alpha_1 \|_N } \left(e^{\| \alpha_1 \pmb{-}\alpha_2 \|_N }  -1 \right),   
\end{equation}
where, the constant depends on the domains $Z$ and its image $\alpha_i(Z)$.

Notice, that if $+$ and $\| \cdot \|$ are the usual operations of addition and the norm in $\cH(Z,W)$, then, whenever $q_2(\alpha_i(Z)) \Subset U$  and $0 \in \alpha_i(Z)$,  and  $\alpha \in \cS(Z)$ with $||\alpha||_N<\eps$,  according to $(\ref{n3})$,
\begin{eqnarray}
\nonumber \|\eta_1 -\eta_2 \| & = & \| \phi_1 \circ q_2 \circ \alpha_1 -  \phi_2 \circ q_2 \circ \alpha_2   \| \\
\nonumber &=&   \| (\phi_1-\phi_2) \circ q_2 \circ \alpha_1 +(\phi_2 \circ q_2 \circ \alpha_1 - \phi_2 \circ q_2 \circ \alpha_2)   \|  \\
\nonumber &\le&  \| \phi_1-\phi_2 \|+  \sup_{z \in \alpha_1(Z) \cup \alpha_2(Z)}\{\phi_2'(z^2) 2 z    \}   \|(\alpha_1 -  \alpha_2)   \| \\
\nonumber &\le&   \| \phi_1-\phi_2 \|_1+  C \| \alpha_1 \pmb{-} \alpha_2\|_N,
\end{eqnarray}
where $C$ depends on the domains, their images and $\epsilon$. A similar conclusion holds for  $\|\eta_1 -\eta_2 \|$. Therefore, if two pairs $\zeta_1$, $\zeta_2$ of the form (\ref{comp-pair}) belong to $\cA(U,V,Z,W,\eps)$, then
\begin{equation} \label{eq:normsbounds}
\|\zeta_1-\zeta_2 \| \le C \ d_{1,N}(\zeta_1,\zeta_2).
\end{equation}

We remark, that completely analogously to Proposition \ref{prop-submfld1}, we have the following:
\begin{prop}
\label{prop-submfld2} Let $\cW\subset \cE(U,V)$  be an open set in which $\psi'(0)\neq 0$ (for instance, small open neighborhoods
 of a pair of factors in which $\psi$ is locally univalent at the origin).
Then, for $\eps>0$ small enough, the  space  $(\cM(U,V)\cap \cW)\times\cS(Z,W,\eps)$ is a Banach submanifold of $\cE(U,V,Z,W,\eps)$.
\end{prop}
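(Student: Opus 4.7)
The plan is to reduce the statement to Proposition \ref{prop-submfld1} together with the observation that $\cS(Z,W,\eps)$ is an open subset of a Banach space, so that the claimed space is a product of a Banach submanifold with an open set.

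First, I will note that the nonlinearity operator $N$ identifies $\cS(Z)$ with $\cH(Z)$ as sets, and the transported linear operations (\ref{nl_lin_struct}) together with the norm (\ref{n_lnorm}) make $\cS(Z)$ into a Banach space which is linearly isometric (via $N$) to $\cH(Z)$. Consequently $\cS(Z,W)$ is a Banach space and $\cS(Z,W,\eps)$ is an open $\eps$-ball in it. I next choose $\eps>0$ small enough that the inclusions (\ref{eq:domains}) hold for every $(\alpha,\beta)\in\cS(Z,W,\eps)$; such an $\eps$ exists because $Z\Supset q_2^{-1}(U)$, $W\Supset q_2^{-1}(V)$ and, by (\ref{n3}), each $\alpha$ with $\|\alpha\|_N$ small is uniformly close to the identity on $Z$ (and similarly for $\beta$ on $W$). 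This ensures that for every $(\phi,\psi)\in\cE(U,V)$ and every $(\alpha,\beta)\in\cS(Z,W,\eps)$ the composition $(\phi\circ q_2\circ\alpha,\psi\circ q_2\circ\beta)$ is a well-defined element of $\cH(Z,W)$, so that $\cE(U,V,Z,W,\eps)=\cE(U,V)\times\cS(Z,W,\eps)$ is genuinely the product Banach manifold.

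Next, I invoke Proposition \ref{prop-submfld1}: on the open set $\cW\subset\cE(U,V)$, the determinant (\ref{eq:determinant}) does not vanish, hence $\cM(U,V)\cap\cW$ is cut out locally by the three analytic equations $F_1=F_2=F_3=0$ of Proposition \ref{prop-submfld1}, and is a Banach submanifold of $\cE(U,V)$ of codimension three. Regarded as functions on the product $\cE(U,V)\times\cS(Z,W,\eps)$ which are independent of the $(\alpha,\beta)$ coordinate, these same three equations continue to satisfy the regular value hypothesis since the partial differential with respect to $(\hat\phi_2,\hat\phi_3,\hat\psi_0)$ is unchanged. The Regular Value Theorem, applied on the product, then exhibits $(\cM(U,V)\cap\cW)\times\cS(Z,W,\eps)$ as a Banach submanifold of $\cE(U,V,Z,W,\eps)$, finishing the argument.

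I expect no real obstacle here: the only delicate point is the selection of $\eps$ guaranteeing (\ref{eq:domains}), which is immediate from the estimate (\ref{n3}) combined with the strict inclusions assumed on the ambient domains. Every other step is a direct transcription of Proposition \ref{prop-submfld1} onto a product space.
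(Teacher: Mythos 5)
Your argument proves the proposition as literally printed, and for that literal reading it is essentially fine: $\cS(Z,W,\eps)$ is an open ball in the Banach structure transported by $N$, and the product of the submanifold $\cM(U,V)\cap\cW$ from \propref{prop-submfld1} with an open subset of a Banach space is trivially a Banach submanifold of $\cE(U,V,Z,W,\eps)$. But this is not what the paper actually proves, nor what it needs afterwards, and the discrepancy is the real issue. The paper's own proof concerns the set of quadruples $(\phi,\psi,\alpha,\beta)$ for which the \emph{composed} pair (\ref{comp-pair}), $(\eta,\xi)=(\phi\circ q_2\circ\alpha,\psi\circ q_2\circ\beta)$, is almost commuting -- the set $\cM(U,V,Z,W,\eps)$ introduced just before the proposition. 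That set is not a product, because the almost-commutation conditions couple $(\phi,\psi)$ with $(\alpha,\beta)$: the order-two condition (\ref{eq:accond2}) becomes
\[
\phi'\bigl((\alpha(\psi(0)))^2\bigr)\,\alpha(\psi(0))\,\alpha'(\psi(0))\,\psi'(0)-\psi'\bigl(\beta(\phi(0))^2\bigr)\,\beta(\phi(0))\,\beta'(\phi(0))\,\phi'(0)=0,
\]
and the $n=0$ condition becomes $\phi(\alpha(\psi(0))^2)=\psi(\beta(\phi(0))^2)$. Two signs that the literal product reading cannot be the intended one: in your proof the clause ``for $\eps>0$ small enough'' does no work beyond (\ref{eq:domains}), which is already built into the definition of the ambient space; and it is the coupled set of almost commuting quadruples (e.g. $\cA(U,V,Z,W,\eps)$, $\cA(U,V,Z,W)$) whose manifold and tangent-space structure is used later, notably in the proof of \thmref{mainthm2}.

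The step your plan is missing -- and it is the entire content of the paper's short proof -- is the verification that these $(\alpha,\beta)$-dependent conditions still cut out a submanifold. This is a perturbation argument: by (\ref{n3a})--(\ref{n3}), for $(\alpha,\beta)\in\cS(Z,W,\eps)$ the maps $\alpha,\beta$ are uniformly close to the identity and $\alpha',\beta'$ are close to $1$, so the modified conditions are $O(\eps)$-perturbations of (\ref{eq:accond0}) and (\ref{eq:accond2}). Hence the differential with respect to the distinguished coordinates $(\hat\phi_2,\hat\phi_3,\hat\psi_0)$ is a small perturbation of the matrix in (\ref{eq:determinant}), whose determinant $\psi(0)^{9}\psi'(0)$ is bounded away from zero on $\cW$; for $\eps$ small enough the Regular Value Theorem then applies, with $(q,s,\alpha,\beta)$ as the remaining (parametrizing) coordinates. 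So you should either restate the proposition for the coupled set and supply this perturbation computation, or make explicit that your product version is strictly weaker than the statement the rest of the paper relies on.
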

\begin{proof}
Indeed,
the commutation conditions (\ref{eq:accond0}) and (\ref{eq:accond1}) for the a.c.h. pairs do not change, while the condition (\ref{eq:accond2}) becomes:
$$\phi'\left(\left(\alpha(\psi(0))\right)^2 \right)  \alpha(\psi (0))  \alpha'(\psi(0)) \psi'(0)-\psi'(\beta(\phi(0))^2)  \beta(\phi (0)) \beta'(\phi(0))  \phi'(0)=0.$$
This condition is a perturbation of the condition (\ref{eq:accond2}). Therefore, if $\eps$ is sufficiently small, then the determinant $(\ref{eq:determinant})$ is non-zero.
\end{proof}

Consider a renormalizable a.c. pair (\ref{comp-pair}) (in the sense of the Definition \ref{def:renormalizability}).

Its renormalization is given by 
\begin{eqnarray}
\nonumber \cR (\eta,\xi)&=&\left(c \circ \lambda^{-1} \circ \eta \circ \xi \circ \lambda \circ c  ,  c \circ \lambda^{-1} \circ \eta \circ \lambda \circ c \right)\\
\nonumber &=& \left(c \circ \lambda^{-1} \circ \phi \circ q_2 \circ \alpha \circ \psi \circ q_2  \circ \beta \circ \lambda \circ c  ,  c \circ \lambda^{-1} \circ \phi \circ q_2  \circ \alpha \circ \lambda \circ c \right) \\
\nonumber &=& \left(\left\{c  \hspace{-0.4mm} \circ \hspace{-0.4mm} \lambda^{-1} \! \! \circ \hspace{-0.4mm} \phi \hspace{-0.4mm} \circ \hspace{-0.4mm} q_2 \hspace{-0.4mm} \circ \hspace{-0.4mm} \alpha \hspace{-0.4mm} \circ \hspace{-0.4mm} \psi \hspace{-0.4mm} \circ \hspace{-0.4mm} \lambda^2 \hspace{-0.4mm} \circ \hspace{-0.4mm} c \hspace{-0.4mm} \right\} \! \circ \hspace{-0.4mm} q_2 \hspace{-0.4mm} \circ  \! \left\{ \hspace{-0.4mm} c \hspace{-0.4mm} \circ \hspace{-0.4mm} \lambda^{-1} \! \! \circ \hspace{-0.4mm} \beta \hspace{-0.4mm} \circ \hspace{-0.4mm} \lambda \hspace{-0.4mm} \circ \hspace{-0.4mm} c \hspace{-0.4mm} \right\}\right. , \\
\nonumber &\phantom{=}& \left. \ \left\{ \hspace{-0.4mm} c \hspace{-0.4mm} \circ \hspace{-0.4mm} \lambda^{-1} \! \! \circ \hspace{-0.4mm} \phi \hspace{-0.4mm}\circ \hspace{-0.4mm} \lambda^2 \hspace{-0.4mm} \circ \hspace{-0.4mm} c \hspace{-0.4mm} \right\} \! \circ  \hspace{-0.4mm} q_2 \hspace{-0.4mm} \circ \! \left\{ \hspace{-0.4mm} c \hspace{-0.4mm} \circ  \hspace{-0.4mm}  \lambda^{-1}\! \!  \circ \hspace{-0.4mm} \alpha \hspace{-0.4mm} \circ \hspace{-0.4mm} \lambda \hspace{-0.4mm} \circ \hspace{-0.4mm} c \hspace{-0.1mm} \right\}\right) \\
\label{eq:tildezeta} &=& \left( \tilde{\phi} \circ q_2 \circ \tilde{\alpha}, \tilde{\psi} \circ q_2 \circ \tilde{\beta} \right).
\end{eqnarray}
Note that $\tilde{\alpha}$ and $\tilde{\beta}$ are again tangent to the identity at the origin.

We further set
\begin{equation}
\cRG(\eta,\xi)=P \circ \cR(\eta,\xi),
\end{equation}
where $P$ is the projection on the almost commuting subspace (Proposition \ref{prop-proj2}).

\noindent
Using the same notation as for symmetric almost commuting pairs should not lead to any confusion since, by Proposition~\ref{prop-proj2} the two definitions of $\cRG$ coincide for a symmetric renormalizable pair.

We have 
$$N[\tilde{\alpha}]=\overline{ \lambda N[\beta] \circ \lambda \circ c}, \quad N[\tilde{\beta}]=\overline{ \lambda N[\alpha] \circ \lambda \circ c}.$$
Therefore, the inclusions $(\ref{eq:inclusion1})$-$(\ref{eq:inclusion3})$ imply that
\begin{eqnarray}
\label{2nd-coord-shrink}
\|(\tilde{\alpha},\tilde{\beta} )\|_N \le |\lambda| \|(\alpha,\beta) \|_N. 
\end{eqnarray}

Therefore, if  $\zeta_1=(\phi \circ q_2, \psi \circ q_2)$ and $\zeta_2=(\phi \circ q_2 \circ \alpha, \psi \circ q_2 \circ \beta)$ are renormalizable  pairs, then
\begin{eqnarray}
\nonumber d_{1,N}(\cR \zeta_1,\cR \zeta_2)\!\! & \!\! = \!\! & \!\! \|(c \circ \lambda^{-1} \circ \phi \circ q_2 \circ \psi \circ \lambda^2 \circ c   -  c \circ \lambda^{-1} \circ \phi \circ q_2 \circ \alpha \circ \psi \circ \lambda^2 \circ c  ,0)\|_1+\\ 
\nonumber &\phantom{=}&\phantom{\!\! \|(c \circ \lambda^{-1} \circ \phi \circ q_2 \circ \psi \circ \lambda^2 \circ c}+\|(\tilde{\alpha},\tilde{\beta})\|_N \\
 \label{eqnren1} & <& C \| \alpha \|_N +|\lambda| \|(\alpha,\beta) \|_N,
\end{eqnarray}
where $C$ depends on the domains and the function $\phi$, and
\begin{eqnarray}\label{eq:lambdaNnorm1}
\nonumber d_{1,N}((\tilde{\phi} \! \circ \! q_2,\tilde{\psi} \! \circ \! q_2) ,\cR \zeta_1 )\!\!&\!\!=\!\!&\!\! \|(c \! \circ \! \lambda^{-1} \!  \circ \!  \phi \!  \circ  \! q_2 \! \circ \! \psi \! \circ \!   \lambda^2 \!  \circ \! c   \! - \! c \!  \circ \!  \lambda^{-1} \!  \circ \!  \phi \!  \circ \!  q_2 \! \circ \!  \alpha \!   \circ \!  \psi \!  \circ \!  \lambda^2 \!  \circ  \!  c,0)\|_1  \\
\nonumber &<& C \| \alpha \|_N.
\end{eqnarray}

As a consequence, let $B_r((\phi_0,\psi_0))$ be as in \thmref{mainthm1}. Then there exist $\eps>0$ and $\gamma\in(0,1)$ such that  for all $\zeta$ with $||(\alpha,\beta)||_{N}<\eps$ and $(\phi,\psi) \in B_r((\phi_0,\psi_0))$ we have:
\begin{equation}\label{eq:lambdaNnorm2}
d_{1,N}((\tilde{\phi} \circ q_2,\tilde{\psi} \circ q_2),\cR \zeta) \le \gamma \|(\alpha,\beta)\|_N.
\end{equation}


\subsection{Hyperbolicity of renormalization for non-symmetric pairs}
Let $\upsilon$ be the forgetful map from $\cA(U,V,Z,W)$ to $\cA_1(U,V)$ sending
$$(\phi,\psi,\alpha,\beta)\mapsto (\phi,\psi).$$
Denote $\iota$ its one-sided inverse: the natural inclusion of $\cA_1(U,V)$ into $\cA(U,V,Z,W)$ given by
$$(\phi,\psi)\mapsto(\phi,\psi,\text{Id},\text{Id}).$$
Let $U$, $V$, $\tl U$, $\tl V$  be as in \thmref{mainthm1}, and let $(\phi_*,\psi_*)$ be the hyperbolic fixed point of $\cRG$ in
$\cA_1(U,V)$ constructed in this theorem. 
Clearly, 
$$\zeta_*\equiv (\phi_*,\psi_*,\text{Id},\text{Id})$$
is a fixed point of $\cRG$ in 
$$\cA\equiv \cA(U,V,Z,W).$$
Let  $L\equiv  D \cRG|_{(\phi_*,\psi_*)} \circ c$, where $c$ is the complex conjugation, as before.

We prove:
\begin{thm}\label{mainthm2} 
There exist topological disks $\tl Z \Supset Z$ and $\tl W \Supset W$,  neighborhood $\cW$ of $\zeta_*$ in  $\cA(U,V,Z,W)$,  such that the following holds.

\begin{itemize}
\item[$(i)$] The transformation $\cRG$ is an anti-analytic operator from $\cW$ to $\cA(\tl U,\tl V,\tl Z, \tl W)$.
\item[$(ii)$] The differential $D \cRG|_{\zeta_*}$ is a compact anti-linear operator in $T_{\zeta_*}\cA(U,V,Z,W)$. The operator $L=D\cRG|_{\zeta_*} \circ c$ has a single eigenvalue outside the closed unit disk, which coincides with the unstable eigenvalue of the operator $K$, constructed in \thmref{mainthm1}. The rest of the spectrum lies incide the open unit disk.
\end{itemize}
\end{thm}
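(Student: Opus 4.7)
I would split $L=D\cRG|_{\zeta_*}\circ c$ along an invariant decomposition of the tangent space and argue in three stages: analyticity and compactness of $\cRG$ (part (i)), a block-triangular structure for $L$, and the identification of the non-symmetric block as a contraction (both giving part (ii)).

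For (i), I would shrink $\cW$ so that for every $\zeta=(\phi,\psi,\alpha,\beta)\in\cW$ the renormalizability conditions (\ref{eq:inclusion1})--(\ref{eq:inclusion3}) hold with a definite margin.  This is possible because the fixed-point factors $(\phi_*,\psi_*)$ extend univalently to the strictly larger disks of Theorem~\ref{mainthm1}(iii), and a small nonlinearity norm perturbs the domain inclusions (\ref{eq:domains}) only slightly.  On such $\cW$, the operator $\cR$ of (\ref{eq:tildezeta}) is well-defined and anti-analytic (since $c$ enters the compositions an odd number of times), while $\cP$ is analytic by Proposition~\ref{prop-proj2}; hence $\cRG=\cP\circ\cR$ is anti-analytic into $\cA(\tl U,\tl V,\tl Z,\tl W)$ for suitable $\tl Z\Supset Z$, $\tl W\Supset W$.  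Compactness of $D\cRG|_{\zeta_*}$ then follows because the image lies in functions holomorphic on strictly larger domains, making the restriction $\cA(\tl U,\tl V,\tl Z,\tl W)\hookrightarrow\cA(U,V,Z,W)$ compact: for the weighted-$\ell^1$ factor this is Lemma~\ref{lem-cpt1} combined with Proposition~\ref{norm-equiv}(2), and for the nonlinearity factor a Koebe-type estimate applied to $N[\alpha],N[\beta]$ on nested disks.

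The key structural observation for (ii) is that the inclusion image $\iota(\cA_1)=\{\alpha=\beta=\mathrm{Id}\}$ is \emph{invariant} under $\cRG$: substituting $\alpha=\beta=\mathrm{Id}$ into (\ref{eq:tildezeta}) and using that $\lambda(z)=\lambda\cdot z$ is linear gives $\tilde\alpha=c\circ\lambda^{-1}\circ\mathrm{Id}\circ\lambda\circ c=\mathrm{Id}$, and similarly $\tilde\beta=\mathrm{Id}$; the projection $\cP$ touches only the $(\phi,\psi)$ coordinates and so preserves the invariance.  Writing $T_{\zeta_*}\cA=E_1\oplus E_2$, where $E_1$ is the tangent space to $\iota(\cA_1)$ and $E_2$ is the nonlinearity tangent space at $(\mathrm{Id},\mathrm{Id})$, this invariance forces the block upper-triangular form
\begin{equation*}
L=\begin{pmatrix} K & B \\ 0 & D_2 \end{pmatrix},
\end{equation*}
where $K$ is precisely the operator of Theorem~\ref{mainthm1}(v), because $\cRG|_{\iota(\cA_1)}$ coincides with the $\cRG$ of Theorem~\ref{mainthm1}.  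Consequently $\mathrm{spec}(L)=\mathrm{spec}(K)\cup\mathrm{spec}(D_2)$, and the unstable eigenvalue of $L$ is the one already produced by Theorem~\ref{mainthm1}.

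For the $E_2$-block, linearizing $N[\tilde\alpha]=\overline{\lambda N[\beta]\circ\lambda\circ c}$ and its twin for $\tilde\beta$ at $\zeta_*$ (where $\lambda=\lambda_*$ is a constant) and composing with $c$ produces
\begin{equation*}
D_2:\ (f,g)\ \longmapsto\ \bigl(\bar\lambda_*\,g(\lambda_*\,\cdot\,),\ \bar\lambda_*\,f(\lambda_*\,\cdot\,)\bigr),
\end{equation*}
whose square is diagonal with scaling factor $|\lambda_*|^2$.  Since $|\lambda_*|<1$, this operator is compact on the nonlinearity tangent space and has spectral radius $|\lambda_*|<1$, so $\mathrm{spec}(D_2)$ lies in the open unit disk.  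Combined with Theorem~\ref{mainthm1}(v), $L$ has exactly one simple eigenvalue outside the closed unit disk (inherited from $K$) and the rest of the spectrum in the open unit disk, proving (ii).  The main obstacle is really just the verification that the lower-left block vanishes, i.e.\ that $\iota(\cA_1)$ is exactly invariant under $\cRG$; without it one would have to carry out a $\mu$-dependent Schur-complement perturbation of $K$ to recover the spectrum, which would require much sharper control of $B$ and $D_2$ than a contraction bound alone provides.
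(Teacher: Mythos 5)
Your proposal is correct and takes essentially the same route as the paper's proof: compactness via the extension to strictly larger domains (Lemma~\ref{lem-cpt1} together with Theorem~\ref{mainthm1}(iii)), invariance of the symmetric slice $\iota(\cA_1(U,V))$ on which $\cRG$ reduces to the operator of Theorem~\ref{mainthm1}, and contraction of the nonlinearity coordinates by a factor $\approx|\lambda_*|<1$ (the paper's inequality (\ref{2nd-coord-shrink}), your explicit block $D_2$) forcing every eigenvector with $|\mu|\geq 1$ to have vanishing $(\alpha,\beta)$-part. The one point the paper treats a bit more carefully is the effect of the projection $\cP$ on the nonlinearity coordinates off the slice (it does not literally act only on $(\phi,\psi)$ when $\alpha,\beta\neq\mathrm{Id}$), which it absorbs by replacing $|\lambda_*|$ with some $\tl\lambda<1$, using that $\cP$ is analytic and equals the identity on almost commuting pairs.
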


\begin{proof} $\phantom{a}$ \\
\medskip
\noindent $(i)$ We recall that by $(\ref{eq:domains})$
$$\alpha(Z)\Supset q_2^{-1}(U),\;\beta(W)\Supset q_2^{-1}(V).$$
 By  part  $(iii)$   of  \thmref{mainthm1}, there exists  $\cW \subset \cA(U,V,Z,W)$ such that  $\cRG \zeta$  for every $\zeta \in \cW$  extends analytically to some $\tl Z  \cup \tl W$, such that
$$\alpha(\tl Z)\Supset q_2^{-1}(\tl U),\;\beta(\tl W) \Supset q_2^{-1}(\tl V).$$

\noindent $(ii)$  Compactness of $D\cRG_{\zeta_*}$ follows from part $(i)$ and Lemma~\ref{lem-cpt1}. By the Spectral Theory of compact operators, every element of the spectrum of the operator $L$, except the origin,  is an isolated eigenvalue. Let us first note that if $\mu\in\text{sp}(L)$ and $|\mu|\geq 1$ then every corresponding eigenvector has a form:
$$((\bar u,\bar v), (0, 0)).$$
In other words, all eigenvectors which are not stable must lie in the tangent space to the natural inclusion 
$\iota(\cA_1(U,V))$. 
Indeed, since the projection $P$  analytically depends on the pair (Proposition~\ref{prop-proj2}), and coincides with the identity on almost commuting pairs, if $\cW$ is sufficiently small, then there exists $\lambda_* <\tl \lambda <1$ such that  the inequality (\ref{2nd-coord-shrink}) holds for $(\hat{\alpha},\hat{\beta})$ with $\tl \lambda$, where 
$$\cRG \zeta=( \hat{\phi} \circ q_2 \circ \hat{\alpha}, \hat{\psi} \circ q_2 \circ \hat{\beta}).$$ 
Hence the part of the spectrum of $L$ with non-zero $(\alpha,\beta)$-coordinates must lie in the disk $\{|z|<\lambda\}$.

Finally, the spectrum of the operator $L \arrowvert_{T_{\zeta_*}\iota(A_1(U,V))}$, coincides with that of $K$ from \thmref{mainthm1}, and the rest of the claim follows.

\end{proof}

Recall, that $\cC(Z,W)$ denotes the Banach submanifold of $\cH(Z,W)$ given by the linear conditions 
$$\eta'(0)=\xi'(0)=0,$$ and that $\cB(Z,W)$ is the submanifold of $\cC(Z,W)$ consisting of almost commuting pairs. We reformulate Theorem\ref{mainthm2}  using the uniform norms as follows:

\begin{thm}\label{mainthm3} Let $Z$, $W$, $\tl Z$ and $\tl W$ be as in Theorem~\ref{mainthm2}. There exist a pair of topological disks $Z_1$ and $W_1$,
\begin{equation} \label{eq:ZWs}
\tl Z\Supset Z_1 \Supset Z, \quad \tl W \Supset W_1 \Supset W,
\end{equation}
such that
\begin{itemize}
\item[$(i)$]   The operator $\cRG$ is an anti-analytic operator from an open neighborhood of $\zeta_*$ in $\cC(Z_1,W_1)$ to $\cC(\tl Z, \tl W)$. 
\item[$(ii)$]  The differential $D \cRG \arrowvert_{\zeta_*}$ is a compact anti-linear operator. Denote $M\equiv D {\cRG} \arrowvert_{\zeta_*}\circ c$. Then $M$ has a single simple eigenvalue outside of the closed unit disk, and the rest of the spectrum of $M$ lies inside the open unit disk.
\end{itemize}
\end{thm}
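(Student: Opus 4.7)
My plan is to derive Theorem~\ref{mainthm3} from Theorem~\ref{mainthm2} by exploiting the nested domains: the uniform-norm set-up on $\cC(Z_1,W_1)$ is a ``larger'' version of the $d_{1,N}$-set-up on $\cA(U,V,Z,W)$, but any additional direction lies among the strongly contracted $(\alpha,\beta)$-directions, so no new unstable eigenvalues can appear. First I would pick topological disks $Z_1,W_1$ with $Z\Subset Z_1\Subset \tilde Z$ and $W\Subset W_1\Subset \tilde W$. For part~$(i)$, Theorem~\ref{mainthm1}~$(iii)$ pulled back through $q_2$ shows that $\zeta_*$ extends analytically to $\tilde Z,\tilde W$, and the inclusions $(\ref{eq:inclusion1})$--$(\ref{eq:inclusion3})$ needed for $\cR$ to produce a pair defined on $\tilde Z,\tilde W$ are open conditions in the uniform topology. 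Hence on a neighborhood $\cW$ of $\zeta_*$, $\cR$ lands in $\cC(\tilde Z,\tilde W)$; post-composition with the analytic projection $P$ from Proposition~\ref{prop-proj2} yields an anti-analytic $\cRG:\cW\to\cC(\tilde Z,\tilde W)$. Compactness of $D\cRG|_{\zeta_*}$ in $(ii)$ is then immediate: it factors as the analytic-extension map into $\cC(\tilde Z,\tilde W)$ followed by the compact Montel restriction back to $\cC(Z_1,W_1)$ (Lemma~\ref{lem-cpt1}). So $M=D\cRG|_{\zeta_*}\circ c$ is compact, with isolated non-zero eigenvalues of finite multiplicity, all supported in $T_{\zeta_*}\cB$ since the image of $P$ lies in $\cB$.

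For the spectrum, my plan is to parametrize $T_{\zeta_*}\cB$ as a direct sum of a symmetric summand (tangent to $\iota(\cA_1(U,V))$ under $(\phi,\psi)\mapsto(\phi\circ q_2,\psi\circ q_2)$) and an asymmetric summand coming from identity-tangent $(\alpha,\beta)$-conjugations. Concretely, a tangent vector $\delta\eta$ of the first coordinate splits at $0$ into its even Taylor part (giving $\delta\phi$ via $\delta\phi(z^2)=(\delta\eta)_{\mathrm{even}}(z)$) and its odd part (giving $\delta\alpha$ via $2z\,\phi_*'(z^2)\,\delta\alpha(z)=(\delta\eta)_{\mathrm{odd}}(z)$). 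The division by $\phi_*'$ is legitimate on slightly smaller domains since $\phi_*$ is univalent on $\tilde U$ by Theorem~\ref{mainthm1}~$(iii)$, and Cauchy estimates turn the resulting splitting into a bounded linear isomorphism onto the type of tangent space used in Theorem~\ref{mainthm2}. Under this identification $M$ agrees with the operator $L$ of Theorem~\ref{mainthm2} (the projection $P$ fixes $\cB$ pointwise and hence acts as the identity on $T_{\zeta_*}\cB$, where the non-zero spectrum lives). Applying the contraction estimate $(\ref{2nd-coord-shrink})$ transferred through the continuous embedding $(\ref{eq:normsbounds})$, the asymmetric block of $M$ is a uniform-norm contraction on $\cC(Z_1,W_1)$, so every eigenvalue with modulus $\geq 1$ must have a purely symmetric eigenvector. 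The symmetric block coincides with $K$ from Theorem~\ref{mainthm1}~$(v)$, which has a single simple eigenvalue outside the closed unit disk; the rest of the spectrum lies inside the open unit disk.

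I expect the main obstacle to be the rigorous spectral identification across the two Banach-space set-ups. The even/odd Taylor decomposition becomes an isomorphism only after appropriate domain shrinking, needed to convert uniform bounds on $\delta\eta$ into control of $\delta\alpha$ in the nonlinearity norm; and one must verify that the contraction estimate $(\ref{2nd-coord-shrink})$, stated in the $N$-norm, survives transfer to the uniform norm through $(\ref{eq:normsbounds})$ without losing the contraction factor. The bookkeeping of nested domains and continuity constants has to be precise enough that the unstable eigenvalue of $M$ is truly identified with the one supplied by $K$ from Theorem~\ref{mainthm1}.
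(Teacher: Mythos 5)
Your proposal is correct and follows essentially the same route as the paper: compactness comes from Lemma~\ref{lem-cpt1}, and the spectral claim is obtained by splitting tangent vectors at $\zeta_*$ into a symmetric factor part and identity-tangent $(\alpha,\beta)$-directions (possible since eigenvectors with non-zero eigenvalue extend to $\tl Z,\tl W$), with the $(\alpha,\beta)$-block contracted via $(\ref{2nd-coord-shrink})$ so that every eigenvalue of modulus at least one has a purely symmetric eigenvector and is therefore an eigenvalue of the operator $K$ of Theorem~\ref{mainthm1}. The only minor deviation is in part $(i)$, where you argue anti-analyticity of $\cRG$ directly from openness of the renormalizability inclusions together with Proposition~\ref{prop-proj2}, whereas the paper transfers analyticity from Theorem~\ref{mainthm2} through the norm comparisons of Proposition~\ref{norm-equiv} and $(\ref{eq:normsbounds})$; both routes are adequate.
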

\begin{proof} $\phantom{oo}$\\
\noindent $(i)$  By part $(1)$ of  Proposition $\ref{norm-equiv}$ the map $\cRG(\zeta) \in  \cC(\tl Z,\tl W)$ whenever $\zeta \in \cW \subset \cA(U,V,Z,W)$.  Choose any topological disks $Z_1$ and $W_1$ satisfying $(\ref{eq:ZWs})$, and such that, additionally, $q_2(Z_1) \Subset U$, $q_2(W_1) \Subset V$.

Let $\zeta \in C(Z_1,W_1)$ and $h \in C(Z_1,W_1)$, then by part $(2)$ of  Proposition $\ref{norm-equiv}$, $\zeta+t h$ is in $\cA(U,V,Z,W)$, and by part $(i)$ of  Theorem~\ref{mainthm2},
$\cRG(\zeta) + t D \cRG \arrowvert_\zeta h \in \cA(\tl U, \tl V, \tl Z,\tl W)$, and 
$$ d_{1,N} \left( \cRG(\zeta+t h), \cRG(\zeta) + t D \cRG \arrowvert_\zeta h \right)=\eps(\zeta,h,t)=o(t).$$

By $(\ref{eq:normsbounds})$, 
$$  \| \cRG(\zeta+t h) -\left(\cRG(\zeta) + t D \cRG \arrowvert_\zeta h \right) \| \le C \ d_{1,N} \left( \cRG(\zeta+t h), \cRG(\zeta) + t D \cRG \arrowvert_\zeta h \right)=o(t),$$
where $\| \cdot \|$ is the norm in $C(\tl Z, \tl W)$ and   $d_{1,N}$ is the distance in $\cA(\tl U, \tl V,\tl Z, \tl W)$, and we see that $\cRG$ is analytic from $C(Z_1,W_1)$ to $C(\tl Z, \tl W)$.

\medskip

\noindent $(ii)$ Compactness of $D_{\zeta_*}\cRG$ follows by  Lemma~\ref{lem-cpt1}. By the Spectral Theory of compact operators, every element of the spectrum of the operator $M$, except the origin,  is an isolated eigenvalue. Let $\bar v\in\cC(Z_1,W_1)$ be an eigenvector of $M$ such that the corresponding eigenvalue $\lambda$ is non-zero. By \thmref{mainthm1}, the vector $\bar v$ has an analytic continuation to $(\tl Z,\tl W)$. Let us represent $\bar v$ as a decomposition
$$\bar v=(\phi_1\circ q\circ\alpha_1,\psi_1\circ q\circ \beta_1),$$
 where the conformal maps $\alpha_1$ and $\beta_1$ are defined in $\tl Z$, $\tl W$, and therefore have finite nonlinearity norms in $Z_1$, $W_1$. Since $M\bar v=\lambda\bar v$, the same considerations as in the 
proof of Theorem\ref{mainthm2} imply that 
$$\alpha_1=\beta_1=\text{Id}.$$
\end{proof}

\section{Invariant quasi-arc}
\label{sec:quasiarc}

\begin{thm}
\label{mainthm4}
Consider a pair $\zeta\in W^s(\zeta_*) \subset C(Z_1,W_1)$. Assume further that $\zeta=(\eta,\xi)$ is a commuting, rather than approximately commuting, pair, that is
$$\eta\circ\xi(z)=\xi\circ\eta(z)$$
in a neighborhood of $0$ for the analytic continuations of the two maps. This means that successive pre-renormalizations of $\zeta$
are compositions of iterates of $\eta$ and $\xi$.
Then there exists a $\zeta$-invariant Jordan arc $\gamma\ni 0$ such that
the restriction of $\zeta$ to $\gamma$ is quasi-symmetrically conjugate to the pair $H$ by a conjugacy that maps $0$ to $0$.
\end{thm}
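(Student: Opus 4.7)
\medskip

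The plan is to build the invariant arc by a standard telescope/pullback construction driven by the exponential contraction of $\cRG^n\zeta\to\zeta_*$ on the stable manifold, and then to argue that combinatorial $a$ priori bounds upgrade the natural combinatorial conjugacy with $H$ to a quasi-symmetric one. First I would produce the invariant arc for the fixed point itself, and then transfer it to $\zeta$ by a Sullivan-style pullback.

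Construction at the fixed point. Since $\zeta_*=(\eta_*,\xi_*)$ commutes (Proposition~\ref{fixed-pt-commute}) and $|\lambda_*|<1$, the functional equation $\cR\zeta_*=\zeta_*$ lets us self-replicate a short fundamental arc $I_0\subset W$ with endpoints $0$ and $\xi_*(0)=1$ on a preferred invariant curve. Using the two branches of the inverse renormalization coordinate change $z\mapsto c\circ\lambda_*\cdot z$ conjugating $\cR\zeta_*$ back to $\zeta_*$, I pull $I_0$ back, producing a nested sequence of arcs $I_n$ with $\operatorname{diam}(I_n)\le C|\lambda_*|^n$ and $I_n\to\{0\}$. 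Taking the union of the forward $\zeta_*$-orbit of these arcs and adding the accumulation point $0$ produces a closed set $\gamma_*$ which, using that at each scale the renormalized pair sits in the hyperbolic fixed-point domain (part (iii) of \thmref{mainthm1}) with univalent extensions to $\tl U,\tl V$, is a Jordan arc with uniformly bounded geometry. Univalence of $\eta_*,\xi_*$ on these definite neighborhoods, together with Koebe's theorem, gives uniform quasi-symmetry of the arc; that is, $\gamma_*$ is a quasi-arc.

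Transfer to $\zeta\in W^s(\zeta_*)$. Because $\cRG^n\zeta\to\zeta_*$ geometrically in $\cC(Z_1,W_1)$ (\thmref{mainthm3}), the renormalization coordinate changes $\lambda_n$ and the inverse branches used in the construction above are small perturbations of those of $\zeta_*$, and converge to them exponentially fast in the $C^\omega$-topology on a definite neighborhood. I apply the same pullback construction to $\zeta$, obtaining nested arcs $I_n^\zeta$ shrinking to $0$; these are well defined and univalent on a definite scale because each $\cR^n\zeta$ has an a priori univalent extension on the domains of \thmref{mainthm3}. Since $\zeta$ is assumed to be genuinely commuting (not merely almost commuting), the pre-renormalizations really are compositions of iterates of $\eta,\xi$, so the forward $\zeta$-orbit of $I_0^\zeta$ together with the accumulation point $0$ defines a $\zeta$-invariant Jordan arc $\gamma\ni 0$. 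The same Koebe distortion argument, now with distortion constants that depend only on the uniform bounds along the orbit under renormalization, shows that $\gamma$ is a quasi-arc.

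Quasi-symmetric conjugacy to $H$. The closest returns of $0$ under the commuting pair $\zeta$, together with the Jordan arc structure of $\gamma$, yield a combinatorial partition of $\gamma$ at every level $n$ indexed by the golden-mean rotation combinatorics. Exponential convergence $\cR^n\zeta\to\zeta_*$ plus the univalent extensions on definite neighborhoods (\thmref{mainthm1}(iii)) provide \emph{real} a priori bounds: at every level the ratios of lengths of adjacent partition intervals of $\gamma$ are bounded above and below by constants independent of $n$. The corresponding partition of the circle under $H$ has the same combinatorics, and the combinatorial conjugacy $h:H\to\zeta|_\gamma$ sending $0\mapsto 0$ therefore distorts ratios of adjacent atoms by a uniformly bounded amount at every scale. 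By a standard argument (as in the dynamics of critical circle maps, e.g.\ de~Faria--de~Melo), this bounded geometry is equivalent to quasi-symmetry of $h$, completing the proof.

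The main obstacle I anticipate is controlling the geometry of the pullback arcs uniformly at all scales for a non-fixed-point pair $\zeta$: one must combine the exponential convergence of renormalization with Koebe distortion on the univalent extension domains provided by \thmref{mainthm1}(iii) in order to keep Koebe constants uniform in $n$. Everything else reduces to combinatorial bookkeeping of golden-mean returns.
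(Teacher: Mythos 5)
Your overall toolkit (univalent extensions at every renormalization depth, Koebe distortion, bounded geometry of golden-mean partitions, and the standard ``bounded geometry implies quasi-symmetry'' conclusion) is the same as the paper's, and your final step essentially coincides with the paper's use of commensurability of adjacent partition pieces. However, there is a genuine gap in how you produce the invariant arc, both for $\zeta_*$ and, more seriously, for a general $\zeta\in W^s(\zeta_*)$. For the fixed point you pull back a ``fundamental arc'' with endpoints $0$ and $\xi_*(0)=1$ along the renormalization scaling and take its forward orbit, and then simply assert that the resulting closed set ``is a Jordan arc with uniformly bounded geometry''; nothing in your argument rules out self-intersections or overlaps of the forward images of the telescoped arcs --- indeed the phrase ``on a preferred invariant curve'' presupposes the object you are constructing. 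For $\zeta$ itself the problem is worse: since $\zeta$ is not a fixed point there is no self-similar functional equation to drive a pullback telescope, so ``apply the same pullback construction to $\zeta$'' has no defined meaning, and exponential convergence $\cR^n\zeta\to\zeta_*$ does not by itself manufacture a $\zeta$-invariant set.

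The paper's route avoids both issues by never constructing the arc first. It introduces multi-indices, the combinatorial partitions $\cP_n$ of $I\cup J$ under the model pair $H$, and the dynamical partitions $\cV_n$ consisting of the images $\zeta^{\bar w}(Z_n)$, $\zeta^{\bar w}(W_n)$ of the $n$-th pre-renormalization domains under admissible compositions of $\eta$ and $\xi$ (this is precisely where the commuting hypothesis is used). Lemma~\ref{lem-dyn-part} then gives, via univalent extensions of the inverse branches $\zeta^{-\bar q}$ and Koebe distortion, that each piece of $\cV_n$ is a bounded-distortion copy of a fixed-point domain, that nested pieces $k$ levels apart are separated by a definite modulus (hence diameters of nested pieces tend to $0$), and that adjacent pieces are commensurable. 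The conjugacy $\varphi$ is defined pointwise by matching nested sequences $T_n\in\cP_n$ with the corresponding $P_n\in\cV_n$ and setting $\varphi\bigl(\cap T_n\bigr)=\cap P_n$; the arc is $\gamma=\varphi(I\cup J)$, so its Jordan property, its $\zeta$-invariance, and the conjugacy to $H$ come for free, and quasi-symmetry follows from commensurability of adjacent pieces. To repair your write-up you should replace the fixed-point-plus-transfer construction by this direct partition-matching argument (or else prove independently that your telescoped arcs are embedded and explain what the pullback means for a non-fixed $\zeta$); note also that your appeal to ``ratios of lengths of adjacent partition intervals of $\gamma$'' presupposes a rectifiable parametrization, whereas the argument should be run with diameters of the planar partition pieces.
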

The proof follows the work of Stirnemann \cite{Stir}. We start by introducing a convenient notation of multi-indices below.

\subsection{Dynamical partition and multi-indices}
Consider the space $\cI$ of multi-indices $\bar s=(a_1,b_1,a_2,b_2,\ldots,a_n,b_n)$ where $a_j\in \NN$ for $2\leq n$, $a_1\in\NN\cup\{0\}$,
$b_j\in\NN$ for $1\leq j\leq n-1$, and $b_n\in\NN\cup\{0\}$. 
We introduce a partial ordering on multi-indices:
$\bar s\succ \bar t$ if $\bar s=(a_1,b_1,a_2,b_2,\ldots,a_n,b_n)$, $\bar t=(a_1,b_1,\ldots,a_k,b_k,c,d)$, where $k<n$ and 
either $c< a_{k+1}$ and $ d=0$ or $c=a_{k+1}$ and $d< b_{k+1}$. For such a pair, we also define
$$\bar q\equiv \bar s\ominus \bar t:$$
\begin{itemize}
\item in the case when $d=0$, $\bar q=(a_{k+1}-c,b_{k+1},\ldots, a_n,b_n)$;
\item in the other case, $\bar q=(0,b_{k+1}-d,a_{k+1},b_{k+2},\ldots, a_n,b_n).$
\end{itemize}

For a pair of maps $\zeta=(\eta,\xi)$ and $\bar s $ as above we will denote 
$$\zeta^{\bar s}\equiv\xi^{b_n}\circ\eta^{a_n}\circ\cdots\circ\xi^{b_2}\circ\eta^{a_2}\circ\xi^{b_1}\circ \eta^{a_1}.$$
Similarly, 
$$\zeta^{-\bar s}\equiv (\zeta^{\bar s})^{-1}=(\eta^{a_1})^{-1}\circ(\xi^{b_1})^{-1}\circ\cdots\circ(\eta^{a_n})^{-1}\circ (\xi^{b_n})^{-1}.$$

Consider a pair $\zeta\in W^s(\zeta_*)$. 

Consider the $n$-th pre-renormalization of $\zeta$:
$$p\cR^n\zeta=\zeta_{n}=(\eta_{n}|_{Z_n},\xi_{n}|_{W_n}).$$
We define  $\bar s_n,\bar t_n\in\cI$ to be such that
$$\eta_{n}=\zeta_{n}^{\bar s_n},\text{ and }\xi_{n}=\zeta_{n}^{\bar t_n}.$$

Let $T:\RR\to\RR$ be the translation $x\mapsto x+\theta_*$, with $\theta_*=(\sqrt{5}-1)/2$. Define
$$f(x)=T^2(x)-1\text{ and }g(x)=T(x)-1,$$
and set  $$I=[g(0),0],\; J=[0,f(0)],\text{ and }H=(f|_I,g|_J).$$ Define $H_n=(f_n,g_n)=(H^{\bar s_n},H^{\bar t_n})$, and set 
$$I_n=[0,g_n(0)],\;J_n=[0,f_n(0)].$$

Now consider the collection of intervals
$$\cP_n\equiv \{H^{\bar w}(I_n)\text{ for all }\bar w\prec \bar s_n\text{ and }H^{\bar w}(J_n)\text{ for all }\bar w\prec \bar t_n\}.$$
It is easy to see that:
\begin{itemize}
\item[(a)] $\underset{T\in\cP_n}\cup T=I\cup J$;
\item[(b)] for any two distinct elements $T_1$ and $T_2$ of $\cP_n$, the interiors of $T_1$ and $T_2$ are disjoint.
\end{itemize}
In view of the above, we call $\cP_n$ the $n$-th dynamical partition of the segment $I\cup J$.


Consider the sequence of domains $$\cV_n \equiv \{\zeta^{\bar w}(Z_n)\text{ for all }\bar w\prec \bar s_n\text{ and }\zeta^{\bar w}(W_n)\text{ for all }\bar w\prec \bar t_n\}.$$

By analogy with the above definition (and somewhat abusing the notation) we call $\cV_n$ the $n$-th dynamical partition of the pair $\zeta$.

\begin{figure}
\centering
\vspace{2mm}       
\begin{tabular}{c c c} 
\includegraphics[height=50mm,width=45mm, angle=-90]{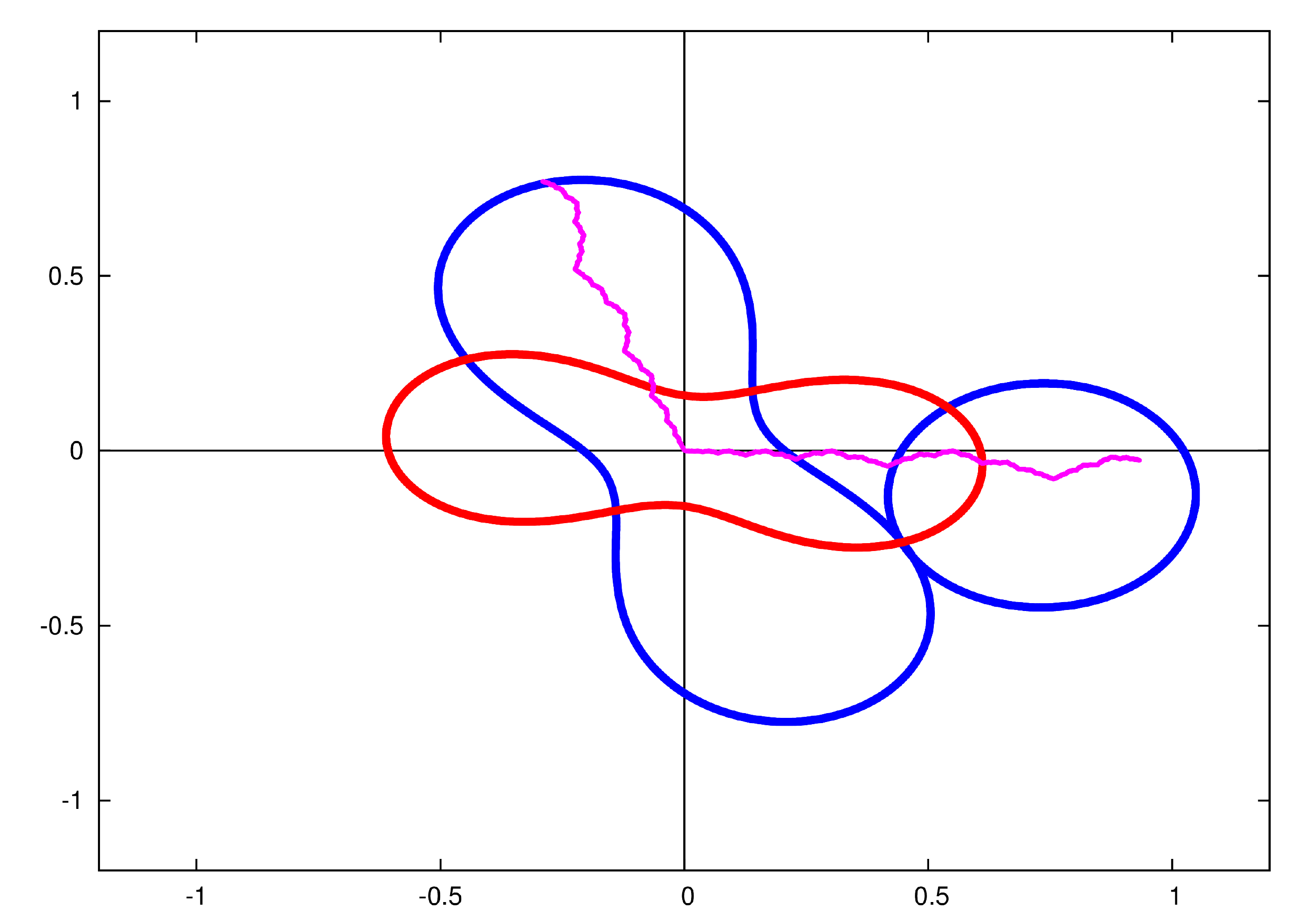} & \includegraphics[height=50mm,width=45mm, angle=-90]{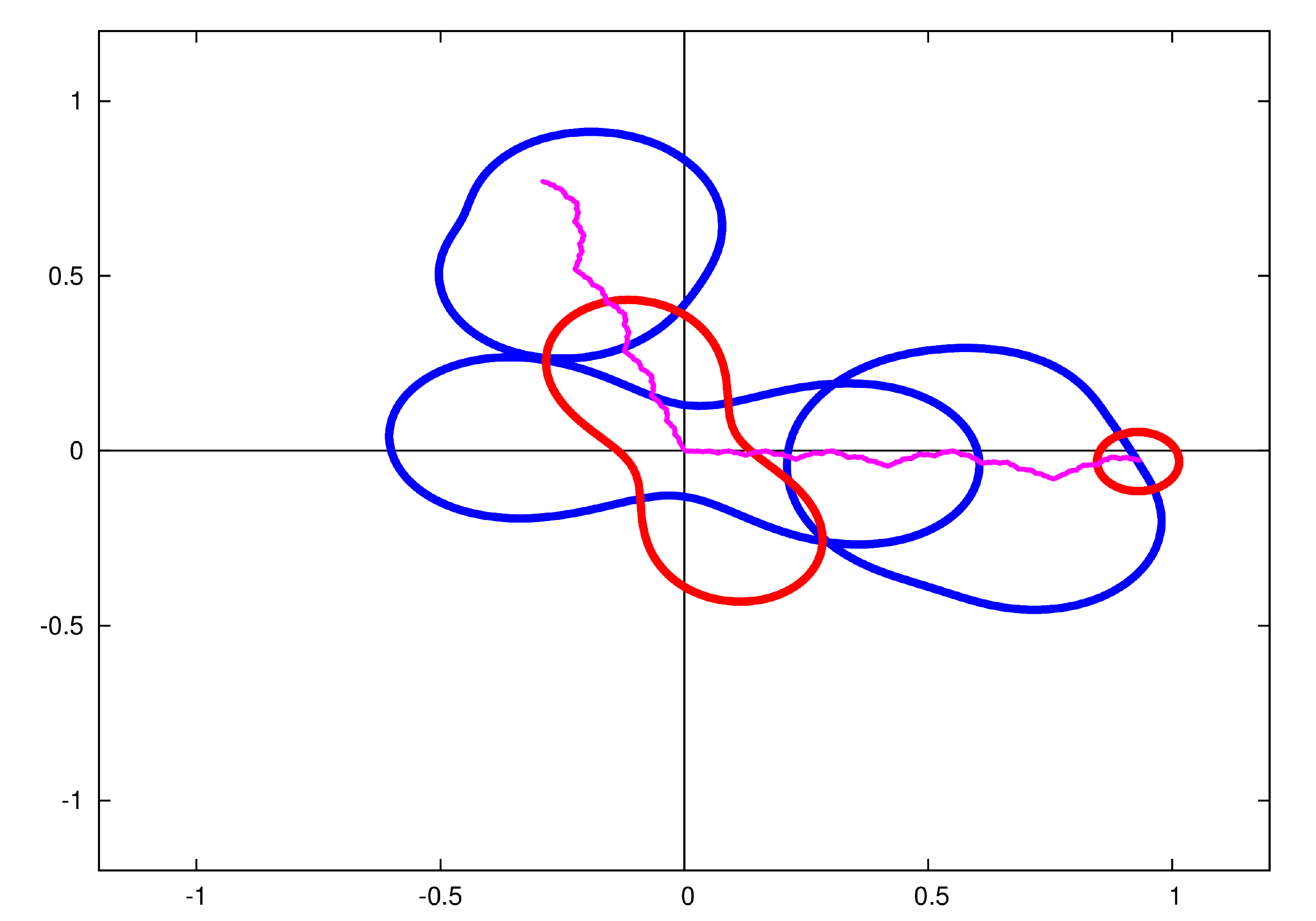}  &  \includegraphics[height=50mm,width=45mm,angle=-90]{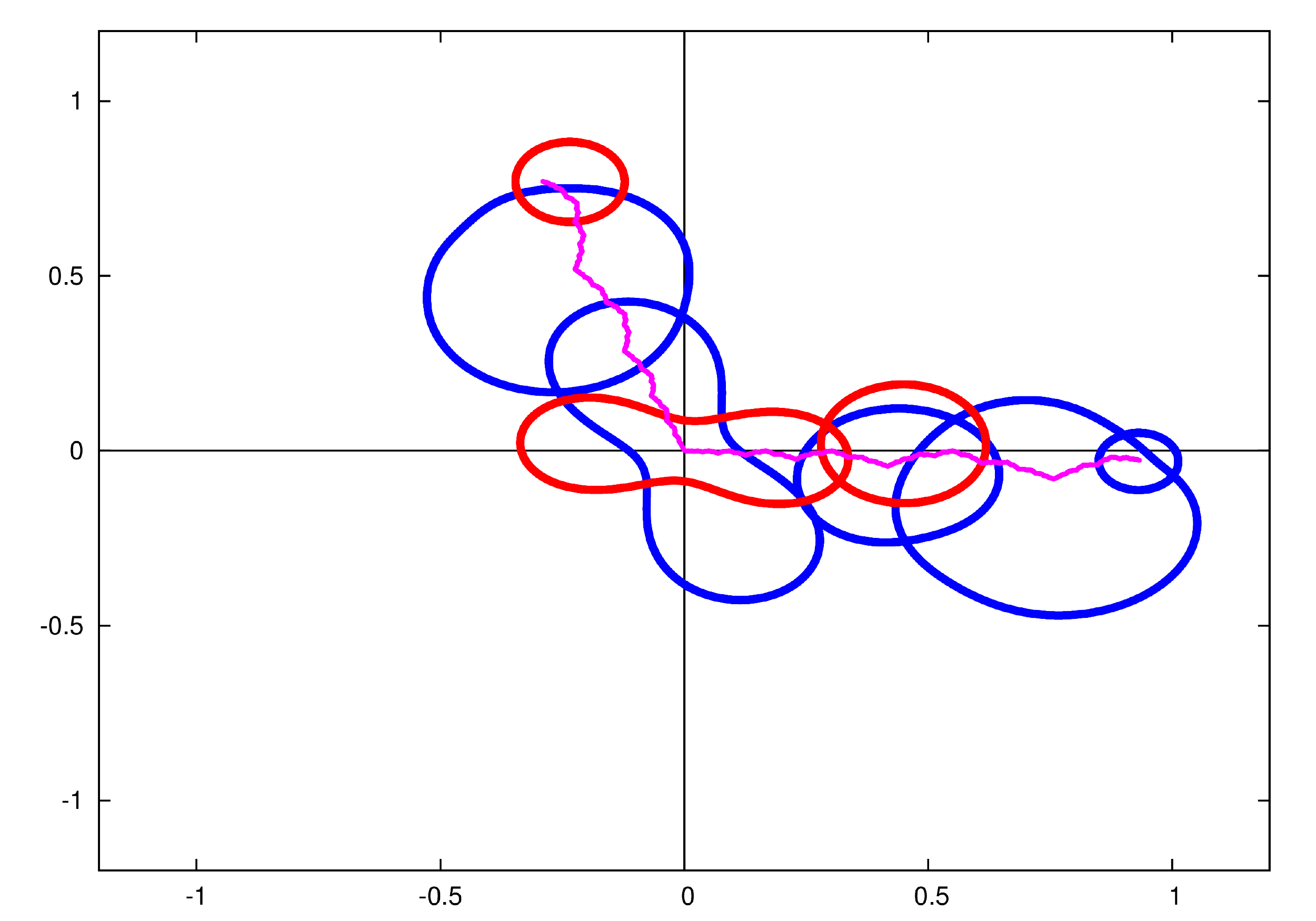}
\end{tabular}
\caption{Partitions: a)  $\cV_1$,  b) $\cV_2$, c) $\cV_3$}
\label{fig:partitions}       
\end{figure}

\noindent
Let us prove the following Lemma:
\begin{lemma}
\label{lem-dyn-part}
There exist $K>1$, $k\in\NN$,  and $C>0$ such that the following properties hold.
\begin{enumerate}
\item For every $n$ and every $W\in\cV_n$, the domain $W$ is a $K$-bounded distortion image of one of the sets $D_1^*=\eta_*(Z_1)$ or $B_n^*=\xi_*(W_1)$.
\item Let $T_{n+k}$ be an interval in the dynamical partition $\cP_{n+k}$ which is contained inside the interval  $S_n\in\cP_n$.
Let $P_{n+k} \in \cV_{n+k}$ and  $Q_n\in \cV_n$ be the elements with the same multi-indices as $T_{n+k}$ and $S_n$. Then
$P_{n+k}\Subset Q_n$ and $$\operatorname{mod}(Q_n\setminus P_{n+k})>C.$$
\item Let $Q_1$ and $Q_2$ belong to the $n$-th partition $\cV_n$ and $Q_1\cap Q_2\neq\emptyset$. Then $Q_1$ is $K$-commensurable with $Q_2$.
\end{enumerate}
\end{lemma}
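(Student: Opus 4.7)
The plan is to exploit the fact that $\zeta \in W^s(\zeta_*)$, so by \thmref{mainthm3} the pre-renormalizations $\zeta_n = p\cR^n\zeta$ converge to $\zeta_*$ exponentially fast in the uniform norm on $(Z_1,W_1)$. I will reduce each of the three claims to the corresponding statement at the fixed point plus a small error term.

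\textbf{Setting up the reduction.} Every element $W \in \cV_n$ equals $\zeta^{\bar w}(Z_n)$ or $\zeta^{\bar w}(W_n)$ for some $\bar w \prec \bar s_n$ or $\bar w \prec \bar t_n$. By the definition of pre-renormalization, there is a total rescaling $\Lambda_j$ (a composition of the scalings $\lambda_0,\lambda_1,\dots,\lambda_{j-1}$) such that $\zeta_j$ corresponds to $\zeta^{\bar s_j}$ conjugated by $\Lambda_j \circ c$. Because the golden mean has continued fraction expansion $[1,1,1,\dots]$, the multi-indices $\bar s_n,\bar t_n$ have a Fibonacci structure in which each $a_i,b_i$ is exactly $1$. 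Consequently, an arbitrary prefix $\bar w\prec \bar s_n$ telescopes as an alternating composition of iterates of $\zeta_1,\zeta_2,\dots,\zeta_n$, conjugated through the intermediate scalings, in which only a \emph{uniformly bounded} (in fact at most two) number of iterates of $\zeta_j$ appear at each renormalization scale $j$.

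\textbf{Part (1): bounded distortion.} The critical point of $\eta$ and $\xi$ at $0$ is encountered only at the innermost step, where it is absorbed by the initial domain $Z_n$ or $W_n$: after one application of $\eta$ or $\xi$, subsequent iterates act univalently on the image. Using the telescoping above, $\zeta^{\bar w}$ becomes a composition of bounded-length univalent words in $\zeta_j$, separated by the scalings $\Lambda_j$. Since $\zeta_j \to \zeta_*$ exponentially and each word has bounded length, Koebe's distortion theorem gives that each factor has distortion $\le 1+C\rho^j$ on an appropriately chosen domain ($\rho<1$). The total distortion is bounded by $\prod_j (1+C\rho^j) < \infty$, yielding a uniform constant $K$. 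The claim that $W$ is a $K$-bounded distortion image of either $D_1^* = \eta_*(Z_1)$ or $B_1^* = \xi_*(W_1)$ follows because the innermost critical step of the composition is close (exponentially in $n$) to the corresponding step $\eta_*$ or $\xi_*$ of the fixed point.

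\textbf{Part (2): modulus bound.} At the fixed point $\zeta_*$, a $k$-fold pre-renormalization contracts domains by the definite factor $|\lambda_*|^k < 1$, so for $k$ large enough, the fixed-point analogue of the inclusion $P_{n+k}\Subset Q_n$ has a positive modulus $C_*>0$. For general $\zeta\in W^s(\zeta_*)$ and $n\ge N_0$ sufficiently large, $\zeta_n$ is so close to $\zeta_*$ that the corresponding modulus differs from $C_*$ by an arbitrarily small amount; transporting this annulus back by the bounded-distortion composition of Part (1) preserves a definite modulus. For the finitely many levels $n<N_0$, the modulus is positive by compactness and direct inspection.

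\textbf{Part (3): commensurability.} Two intersecting elements $Q_1,Q_2\in \cV_n$ are images of $Z_n$, $W_n$ under compositions that differ by a uniformly bounded word. By Part (1), both compositions have distortion at most $K$, so the ratio $\operatorname{diam}(Q_1)/\operatorname{diam}(Q_2)$ is controlled by the ratio $\operatorname{diam}(Z_n)/\operatorname{diam}(W_n)$ times a uniformly bounded factor. At the fixed point this ratio is fixed, and by exponential convergence $\zeta_n\to\zeta_*$ it stays uniformly bounded for all $n$.

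\textbf{Main obstacle.} The delicate step is the bookkeeping of the telescoping in the reduction: one must factor an arbitrary $\zeta^{\bar w}$ into short words at each renormalization scale while tracking how the scalings $\Lambda_j$ interact with the innermost critical step. Once this factorization is set up carefully, Parts (1)--(3) follow from the Koebe distortion theorem combined with the exponential convergence $\zeta_n \to \zeta_*$ supplied by \thmref{mainthm3}.
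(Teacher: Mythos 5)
Your overall strategy --- reduce everything to the fixed point via \thmref{mainthm3} and control distortion by telescoping $\zeta^{\bar w}$ through the renormalization scales --- is genuinely different from the paper's. The paper proves part (1) with a single application of Koebe: writing $W=\zeta^{\bar w}(Z_n)$, it sets $\bar q=\bar s_n\ominus\bar w$ and observes that the inverse branch $\zeta^{-\bar q}$ maps $D_n=\eta_n(Z_n)$ univalently onto $W$ and extends conformally to $\tl D_n\Supset D_n$, because $\eta_n$ extends to the larger domain $\tl Z_n$ as a $2$-to-$1$ branched cover and, since $\zeta\in W^s(\zeta_*)$, the modulus $\operatorname{mod}(\tl D_n\setminus D_n)$ is bounded below. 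One application of the Koebe Distortion Theorem to this inverse branch then gives the uniform $K$ with no factorization of the word at all, and parts (2) and (3) follow from the asymptotically linear scaling of $Z_{n+1},W_{n+1}$ inside $Z_n,W_n$ and commensurability of $Z_n$ with $W_n$.

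Your route, however, contains a genuine gap at its central estimate. You claim that each factor of the telescoped composition has distortion at most $1+C\rho^{j}$ ``since $\zeta_j\to\zeta_*$ exponentially and each word has bounded length.'' Convergence of the renormalizations to the fixed point does not make the distortion of the level-$j$ factor close to $1$: even for $\zeta=\zeta_*$ itself, where $\zeta_j=\zeta_*$ for all $j$, a bounded-length word in $(\eta_*,\xi_*)$ restricted to a definite subdomain has distortion bounded away from $1$ by a fixed constant $K_0>1$, so the product over the roughly $n$ scales is of size $K_0^{n}$ and is not uniformly bounded. For a forward telescoping to work you need a different source of smallness --- for instance, that the piece transported through the level-$j$ factor has diameter exponentially small in $n-j$ relative to a definite Koebe space for that factor, giving distortion $1+O(\rho^{\,n-j})$ --- and establishing that definite space and geometric shrinking at every scale is precisely the geometric content your write-up leaves unproved (it is also the bookkeeping you yourself flag as the main obstacle). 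As written, part (1) is not proved, and parts (2) and (3) lean on it, so they inherit the gap; the paper's inverse-branch argument bypasses this issue entirely.
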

\begin{proof}
To fix the ideas, let us assume that $W=\zeta^{\bar w}(Z_n)$ and that $W \neq Z_n$.  Consider the domain $D_n=\eta_n(Z_n)$.  Set $\bar q=\bar s_n \ominus \bar w$,  and consider the univalent  inverse branch $\psi=\zeta^{-\bar q}$ which maps $D_n$ to $W$. 

Since $\eta_n$ extends to a strictly larger domain $\tl Z_n$ as a $2$-to-$1$ branched covering map, the branch $\psi$ has a conformal extension to $\tl D_n \Supset D_n$. Since $\zeta \in W^s(\zeta_*)$, the sets $D_n$ and $\tl D_n$ are close to being linear rescalings of the domains $D_1^*=\zeta_*(Z_1)$ and $\tl D_1^*=\zeta_*(\tl Z)$, and therefore,  $\operatorname{mod}(\tl D_n \setminus  D_n)$ is universally  bounded. 

The first claim now follows immediately from Koebe Distortion Theorem applied to $\zeta^{-\bar q}$. 

The second claim follows from the fact that $Z_{n+1}$ is a linearly scaled copy of $Z_n$ with an asymptotically constant scaling factor (and similarly for $W_{n+1}$ and $W_n$.

Finally, the last claim follows from  commensurability of $Z_n$ and $W_n$ and Koebe Distortion Theorem.
\end{proof}

 \begin{proof}[Proof of \thmref{mainthm4}.] Denote $\Delta_n^0=J_n$ and $\Delta_n^1=I_n$. Let $T_n \in \cP_n$ be a nested sequence  $T_n\supset T_{n+1}$ with  $T_n=H^{\bar w_n}(\Delta_n^{\omega_n})$, where $\omega_n \in \{0,1\}$.
 
Denote $Q_n^0=Z_n$ and $Q_n^1=W_n$. Let $P_n \in \cV_n$ the corresponding sequence for $\zeta$: $P_n \cap P_{n+1} \ne 0$ and $P_n=\zeta^{\bar w_n}(Q_n^{\omega_n})$. 

By \lemref{lem-dyn-part} part (2),  $\diam(P_n)\to 0$.
Therefore, $\cap P_n$ is a unique point $y$. Finally,  set $x=\cap T_n$, and $\varphi(x) \equiv y$. 

By \lemref{lem-dyn-part} (1) and (2), the map $\varphi$ is a  topological conjugacy. By \lemref{lem-dyn-part} (3), it is quasi-symmetric.
 \end{proof}



\section{Holomorphic pairs}
\label{sec:holomorphic}

Let $\eta: \Omega_1 \mapsto \Sigma$ and $\xi: \Omega_2 \mapsto \Sigma$ be two univalent maps  between quasidisks in $\field{C}$, with $\Omega_i \in \Sigma$. Suppose $\eta$ and $\xi$ have homeomorphic extensions to the boundary of domains $\Omega_i$ and $\Sigma$. Following \cite{McM} we call such a pair $\zeta=(\eta,\xi)$ a holomorphic pair if
\begin{itemize}
\item[1)] $\Sigma \setminus \overline{\Omega_1 \cup \Omega_2}$ is a quasidisk;
\item[2)] $\overline{\Omega_i} \cap \partial \Sigma=I_i$ is an arc; 
\item[3)] $\eta(I_1) \subset I_1 \cup I_2$ and $\xi(I_2) \subset I_1 \cup I_2$; 
\item[4)] $\overline{\Omega_1} \cap \overline{\Omega_2}=\{ c\}$, a single point.
\end{itemize}

We refer the reader to \cite{McM} and \cite{Ya3} for a detailed discussion of McMullen holomorphic pairs.
 
Let  $\hat\zeta$ denote the McMullen renormalization fixed point constructed in \cite{McM}.  As before, let $$P_{\theta_*}(z)=z^2+e^{2\pi i{\theta_*}}z,\text{ where }\theta_*=(\sqrt{5}-1)/2,$$ and let $\zeta_1=(P^2_{\theta_*}|_{X},P_{\theta_*}|_Y)$ be the McMullen holomorphic pair corresponding to the first pre-renormalization of $P_{\theta_*}$. As was shown in \cite{McM}, $$\cR^n\zeta_1\to\hat\zeta$$ geometrically fast.

\begin{figure}[htb]
\centerline{\includegraphics[width=0.6\textwidth]{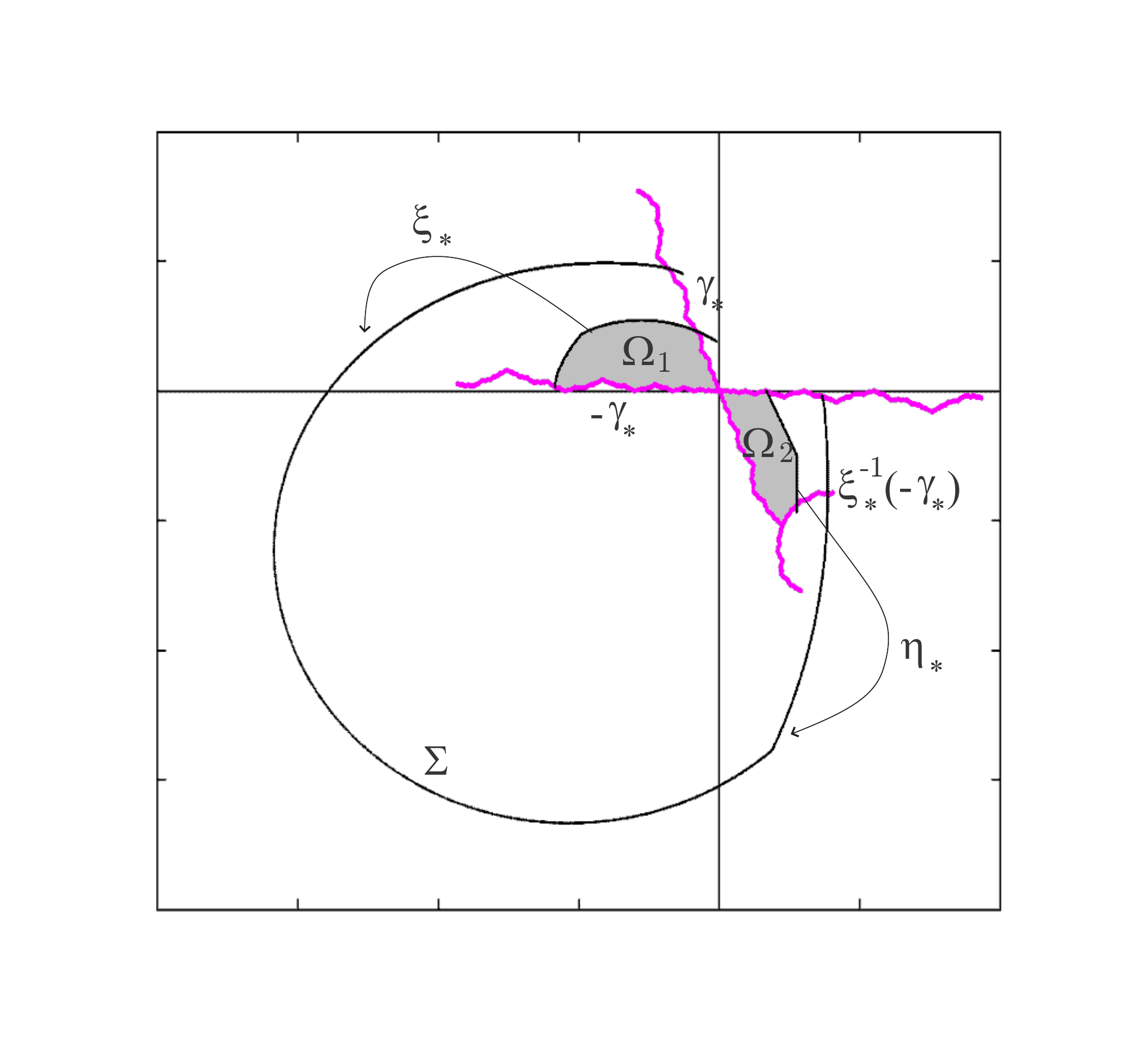}}
\caption{\label{fig-mcmpair}The domains of the McMullen holomorphic pair extension of $\zeta_*$.}
\end{figure}

We use rigorous computer-assisted estimates to prove the following:
\begin{prop}
\label{prop:mcm-extension}
There exists a neighborhood $W \subset C(Z_1,W_1)$, such that for all $\zeta \in W^s(\zeta_*) \cap W$, the pair $\cP^3 \zeta$ has  an extension to a McMullen holomorphic pair 
$$\tl \zeta=(\tl \xi:\Omega_1\to\Sigma, \tl \eta:\Omega_2\to\Sigma).$$
\end{prop}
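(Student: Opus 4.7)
The strategy is to first establish that the fixed point $\zeta_*$ itself admits an extension to a McMullen holomorphic pair on explicit quasidisks $\Omega_1^*,\Omega_2^*,\Sigma^*$, and then to show that this extension property is stable under small perturbations of the pair and under passage to iterates of $\cP$ within $W^s(\zeta_*)$. The key observation is that all four conditions defining a McMullen holomorphic pair are open conditions in a suitable topology: the quasidisk property of $\Sigma\setminus\overline{\Omega_1\cup\Omega_2}$ persists under small $C^0$ perturbations of the boundary, the arcs of intersection $I_i=\overline{\Omega_i}\cap\partial\Sigma$ vary continuously, the boundary inclusions $\eta(I_1)\subset I_1\cup I_2$ and $\xi(I_2)\subset I_1\cup I_2$ are strict inclusions at $\zeta_*$ (away from the common critical value corner), and the transverse intersection $\overline{\Omega_1}\cap\overline{\Omega_2}=\{c\}$ at a single point is stable by the implicit function theorem applied to the intersection locus.

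For the fixed point itself, the plan is to invoke rigorous computer-assisted estimates analogous to those used in the proof of \thmref{mainthm1}. By part~(iii) of that theorem, the factors $\phi_*$ and $\psi_*$ extend univalently to strictly larger disks $\tU\supset \fD_{\tr_\phi}(\tc_\phi)\Supset U$ and $\tV\supset \fD_{\tr_\psi}(\tc_\psi)\Supset V$, so that $\eta_*=\phi_*\circ q_2$ and $\xi_*=\psi_*\circ q_2$ extend univalently to preimages of these larger disks under $q_2$. One then selects explicit computable quasidisks $\Omega_1^*\subset q_2^{-1}(\tU)$ and $\Omega_2^*\subset q_2^{-1}(\tV)$, together with a surrounding quasidisk $\Sigma^*$, and verifies the four McMullen axioms by interval-arithmetic computations bounding the images $\eta_*(\Omega_1^*)$, $\xi_*(\Omega_2^*)$ and the boundary behavior of these maps. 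A convenient reference model for these domains is the McMullen pair of the first pre-renormalization $\zeta_1$ of $P_{\theta_*}$ shown in Figure~\ref{fig-mcmpair}; after rescaling, one expects the extension domains for $\zeta_*$ to be quasi-conformally close to McMullen's.

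For the perturbative step, let $\cW_*$ denote a sufficiently small neighborhood of $\zeta_*$ in $C(Z_1,W_1)$. By the $C^0$-openness of the McMullen conditions described above, together with the univalence of $\eta_*,\xi_*$ on closed neighborhoods of $\overline{\Omega_i^*}$ and Hurwitz's theorem, any $\zeta'\in\cW_*$ (for $\cW_*$ small enough) extends univalently to the same domains $\Omega_i^*,\Sigma^*$ and the four axioms continue to hold for the extension. This produces the desired neighborhood of $\zeta_*$ on which a McMullen holomorphic pair extension exists.

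Finally, to obtain the proposition we use that for any $\zeta\in W^s(\zeta_*)$ the iterates $\cRG^n\zeta\to\zeta_*$, with exponential contraction in the stable directions guaranteed by \thmref{mainthm3}. Choosing a neighborhood $W$ of $\zeta_*$ in $C(Z_1,W_1)$ small enough that $\cRG^3(W\cap W^s(\zeta_*))\subset\cW_*$, the pair $\cRG^3\zeta$ lies in $\cW_*$ and therefore admits the required extension. The number~$3$ of iterations is chosen so that the contraction factor, raised to the third power, compensates the diameter of $W$ and lands the iterate inside $\cW_*$; this is the step where the concrete numerical bounds from \thmref{mainthm1} are used. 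The principal obstacle, and the part requiring the rigorous computer-assisted estimates, is the verification of the McMullen axioms for $\zeta_*$ itself on a concrete choice of quasidisks $\Omega_i^*,\Sigma^*$, since the remaining steps are soft arguments relying on openness and contraction along the stable manifold.
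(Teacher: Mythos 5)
There is a genuine gap, and it is at the heart of your ``openness'' step. Condition 3) in the definition of a McMullen holomorphic pair, $\eta(I_1)\subset I_1\cup I_2$ and $\xi(I_2)\subset I_1\cup I_2$, is \emph{not} an open condition when the domains $\Omega_1,\Omega_2,\Sigma$ are held fixed: it requires the image of the boundary arc to land back on the one-dimensional set $I_1\cup I_2$, which has empty interior in $\CC$. A generic $C^0$-small perturbation of the pair moves $\eta(I_1)$ off the arc entirely, so ``strict inclusion at $\zeta_*$ away from the corner'' buys you nothing. The only way to preserve axiom 3) is to move the domains along with the map: $I_1\cup I_2$ must be forward-invariant under the pair, hence must lie on the dynamically defined invariant quasi-arc of that particular $\zeta$. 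This is exactly why the paper's argument leans on \thmref{mainthm4}: for each $\zeta\in W^s(\zeta_*)$ (commuting) there is a $\zeta$-invariant quasi-arc through the critical point, the domains $\Omega_1,\Omega_2,\Sigma$ are constructed with boundary arcs on this arc (their Jordan/quasidisk character comes from \thmref{mainthm4}), and the computer-assisted estimates verify the remaining geometric requirements (compact containment, univalence, the quasidisk collar $\Sigma\setminus\overline{\Omega_1\cup\Omega_2}$, the single intersection point). In particular even your first step is mis-stated: you cannot pick ``explicit computable quasidisks'' for $\zeta_*$ and check axiom 3) by interval arithmetic, because the boundary arcs $I_i$ are forced to be pieces of the (non-explicit, dynamically defined) invariant arc; the computer assistance enters elsewhere. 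Your proposal never invokes \thmref{mainthm4}, which is the ingredient that makes the construction possible for every $\zeta$ in the stable manifold rather than only at a single map.

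A secondary symptom of the same misreading is your explanation of the exponent $3$. Under your interpretation (renormalize until you land in a fixed small neighborhood $\cW_*$ and use the ``same domains'' there), the $3$ is vacuous: you could equally well shrink $W$ into $\cW_*$ and take zero iterations. In the paper the third pre-renormalization is taken because the maps $\cP^3\zeta$, built from longer compositions of $\eta$ and $\xi$, have domains positioned (relative to the invariant arc and to the range $\Sigma$) so that the McMullen configuration of \cite{McM} can actually be realized inside the region of analyticity guaranteed by \thmref{mainthm1}(iii); it is a statement about the combinatorial depth of the pair, not about contraction along $W^s(\zeta_*)$.
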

We use rigorous computer-assisted techniques to construct the domains $\Sigma$, $\Omega_1$, $\Omega_2$ (see Fig. \ref{fig-mcmpair}); the Jordanness of these domains follows from \thmref{mainthm4}.

Following \cite{McM}, define the filled Julia set of a McMullen holomorphic pair as 
$$K(\tl \zeta)=\cap_{n>0} \tl \zeta^{-n} (\Omega_1 \cup \Omega_2).$$

Given a subset $\Lambda \subset \field{C}$, we say that $z$ is a {\it measurable $\delta$-deep point} of $\Lambda$ if for some $\delta>0$, 
$${\rm area}(B_r(z)-\Lambda)=O(r^{2+\delta}).$$

Combining \propref{prop:mcm-extension} with a standard pull-back argument as well as the results of \cite{McM}, we have:
\begin{thm}
\label{mainthm5}
For every $\zeta \in W^s(\zeta_*) \cap W$  the McMullen holomorphic pair extension  $\tl \zeta$ of $\cP^3 \zeta$ is quasiconformally conjugate to $\hat \zeta$, this conjugacy is conformal on ${\rm int}(K(\tl \zeta))$.
\end{thm}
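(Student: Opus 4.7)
The plan is to combine the McMullen holomorphic pair extension provided by Proposition~\ref{prop:mcm-extension} with the renormalization convergence of Theorem~\ref{mainthm3} to build a quasiconformal conjugacy by the standard pull-back technique, and then to upgrade it to conformality on $\mathrm{int}(K(\tl\zeta))$ via McMullen's deep-point rigidity from \cite{McM}.

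\textbf{Step 1: identification of fixed points.} I would first show that the McMullen holomorphic pair extension $\tl\zeta_*$ of $\zeta_*$ coincides with McMullen's renormalization fixed point $\hat\zeta$. Since $\zeta_*$ is a genuine commuting pair by Proposition~\ref{fixed-pt-commute}, the projection $\cP$ acts as the identity on it, so $\cR\zeta_*=\zeta_*$ exactly. McMullen's cylinder-type renormalization, when restricted to extensions of commuting pairs, agrees with our $\cR$; thus $\tl\zeta_*$ is a fixed point of McMullen's operator in a neighborhood of $\hat\zeta$. By the uniqueness of that fixed point established in \cite{McM}, we conclude $\tl\zeta_*=\hat\zeta$.

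\textbf{Step 2: geometric convergence of extensions.} For $\zeta\in W^s(\zeta_*)\cap W$, \thmref{mainthm3} yields $\cRG^n\zeta\to\zeta_*$ geometrically in the $\cC(Z_1,W_1)$ topology. Because the McMullen extension of Proposition~\ref{prop:mcm-extension} depends continuously on the pair (the construction of the domains $\Sigma,\Omega_1,\Omega_2$ is stable under small perturbations in $\cC(Z_1,W_1)$), the McMullen extensions of the successive renormalizations converge geometrically to $\hat\zeta$, with domains converging in the Hausdorff sense and the maps converging uniformly on compact subsets.

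\textbf{Step 3: quasiconformal conjugacy by pull-back.} Given the exponential proximity of $\cR^n\tl\zeta$ to $\hat\zeta$, there exist quasiconformal homeomorphisms $h_n$ between the crescents $\Sigma\setminus\overline{\Omega_1\cup\Omega_2}$ of these two pairs, with dilatation $K_n\to 1$, matching the boundary identifications. Pulling $h_n$ back through the $n$ renormalization steps using the univalent inverse branches of $\tl\zeta$ and $\hat\zeta$ and then extending by successive dynamical pull-back over all preimages of the fundamental crescent produces a quasiconformal conjugacy on $\Sigma(\tl\zeta)\setminus K(\tl\zeta)$, with uniformly bounded dilatation controlled by a product of Koebe distortions (cf.\ \lemref{lem-dyn-part}). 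A normal-families argument, together with \thmref{mainthm4} (which gives that $\partial\mathrm{int}(K(\tl\zeta))$ is a quasi-arc), extends $h$ to a global quasiconformal conjugacy $h:\Sigma(\tl\zeta)\to\Sigma(\hat\zeta)$.

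\textbf{Step 4: conformality on the Fatou set.} This is the main obstacle. The interior of $K(\tl\zeta)$ is the Siegel disk together with its iterated preimages, and $h$ restricted to each component conjugates dynamics that is (a branch of) an irrational rotation. By \thmref{mainthm4}, the boundary of the Siegel disk is a quasi-arc coinciding with the closure of the postcritical set, and the asymptotic self-similarity furnished by the geometric convergence of Step~2 implies that the critical point, and hence each of its iterated preimages, is a measurable $\delta$-deep point of $\mathrm{int}(K(\tl\zeta))$. McMullen's deep-points rigidity theorem from \cite{McM}, applied to the quasiconformal conjugacy $h$ at these points, forces $h$ to be $C^{1+\alpha}$-conformal on the dense set of iterated preimages of the critical point. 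Since $h$ conjugates rotational dynamics on each Fatou component and the Beltrami coefficient $\bar\partial h/\partial h$ is invariant under this rotational dynamics, the pointwise conformality on a dense deep subset forces $\bar\partial h/\partial h$ to vanish almost everywhere on $\mathrm{int}(K(\tl\zeta))$, giving the desired conformality.
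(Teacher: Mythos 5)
Your overall strategy (extension via Proposition~\ref{prop:mcm-extension}, a pull-back construction, and McMullen's results) is the one the paper invokes, but Step~4 contains a genuine gap. $C^{1+\alpha}$-conformality of $h$ at a point is a pointwise differentiability statement; having it at the countable set of iterated preimages of the critical point gives no information about the Beltrami coefficient of $h$ almost everywhere, and invariance of the Beltrami coefficient under an irrational rotation does not force it to vanish either: in the linearizing coordinate of the Siegel disk every measurable coefficient depending only on $|w|$ is rotation-invariant, so nonzero invariant coefficients abound. Hence ``conformal at a dense set of deep points plus invariant Beltrami coefficient'' does not yield conformality on $\mathrm{int}(K(\tl\zeta))$. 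The standard pull-back argument obtains conformality on the interior the other way around: on the Siegel disk both $\tl\zeta$ and $\hat\zeta$ are conformally linearizable with the same golden-mean rotation number (the combinatorics being identified via \thmref{mainthm4}), so the conjugacy is taken there to be the composition of the two linearizing maps, hence conformal, and this conformality is transported to every preimage component by the dynamical pull-back, the quasiconformal matching across the boundary being handled by the quasidisk/quasi-arc structure; equivalently, one quotes McMullen's theorem that two holomorphic pairs with the same bounded-type rotation number are quasiconformally conjugate by a map conformal on the interior of the filled Julia set. Note also that using the deep-point property of $0$ inside the proof inverts the paper's logic: Corollary~\ref{cor1} deduces that property from \thmref{mainthm5} (it is known for $K(\hat\zeta)$ by \cite{McM} and transported by the quasiconformal conjugacy), so you would have to establish it for $\tl\zeta$ independently, which is itself a nontrivial McMullen-type statement and is not supplied by the phrase ``asymptotic self-similarity''.

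Step~1 is also shaky and unnecessary. McMullen constructs $\hat\zeta$ as a limit of renormalizations of the quadratic pair; a uniqueness statement for that fixed point strong enough to identify the extension of $\zeta_*$ with $\hat\zeta$ a priori is not available at this stage --- indeed, in the paper the identification of $\zeta_*$ with $\hat\zeta$ (as analytic continuations of one another) is the final corollary of \secref{sec:holomorphic}, deduced from \thmref{mainthm5} together with the $C^{1+\alpha}$ rigidity at the critical point, not an input to it. The pull-back construction does not need this identification: it compares $\tl\zeta$ with $\hat\zeta$ directly, using only that both are McMullen holomorphic pairs with golden-mean rotation number and a priori bounds, so your Steps~2--3 should be rerouted accordingly.
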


As a consequence of the above theorem, we have the following.
\begin{cor}
\label{cor1}
The critical point $0$ is a measurable deep point if $K(\tl \zeta)$.
\end{cor}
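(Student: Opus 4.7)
The plan is to deduce the corollary by transporting the deep-point estimate from McMullen's renormalization fixed point $\hat\zeta$ to $\tl\zeta$ via the quasiconformal conjugacy furnished by \thmref{mainthm5}. First, I would invoke McMullen's theorem in \cite{McM}, which asserts that the critical point $0$ is a measurable deep point of $K(\hat\zeta)$: there exist $\hat\delta,C>0$ such that
\begin{equation*}
\Area\bigl(B_r(0)\setminus K(\hat\zeta)\bigr)\le C\,r^{2+\hat\delta}
\end{equation*}
for all sufficiently small $r>0$. This is a direct consequence of the deep self-similar structure of the postcritical set at $\hat\zeta$ established in McMullen's work.

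Next, by \thmref{mainthm5}, there exists a $K$-quasiconformal homeomorphism $h:\CC\to\CC$, normalized so that $h(0)=0$, conjugating $\tl\zeta$ to $\hat\zeta$ and conformal on $\mathrm{int}(K(\tl\zeta))$. In particular, $h\bigl(K(\tl\zeta)\bigr)=K(\hat\zeta)$, so it suffices to transfer the deep-point property through $h$.

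The third step is to pull back the area estimate. The conformality of $h$ on $\mathrm{int}(K(\tl\zeta))$ is the crucial point: the set $B_r(0)\setminus K(\tl\zeta)$ lies outside the filled Julia set, but because $0$ is (to be shown) a deep point, the bulk of a small ball about $0$ is contained in $\mathrm{int}(K(\tl\zeta))$, where $h$ is genuinely conformal. Using Mori's H\"older estimate for $h$ and $h^{-1}$, together with the absolute continuity and $L^p$-integrability of the Jacobian of a $K$-qc map, one translates the deep-point bound for $K(\hat\zeta)$ at $0$ into an analogous bound
\begin{equation*}
\Area\bigl(B_r(0)\setminus K(\tl\zeta)\bigr) = O(r^{2+\delta})
\end{equation*}
for some $\delta>0$ depending on $\hat\delta$ and $K$.

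The main subtlety is the bookkeeping of the exponent: a naive qc distortion argument might degrade $\hat\delta$ to some $\delta<\hat\delta$, but since the corollary only requires the existence of \emph{some} positive $\delta$, this loss is harmless. I expect no serious obstacle beyond this standard preservation-of-deep-points principle for qc maps that are conformal on one side, which is precisely the type of argument systematically developed in \cite{McM}; the substantive content has already been accomplished in \thmref{mainthm5} and in McMullen's original deep-point theorem.
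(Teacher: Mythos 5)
Your route is the paper's route: the proof in the text is exactly the two steps you give --- quote McMullen's theorem that $0$ is a measurable deep point of $K(\hat\zeta)$, and transport this through the quasiconformal conjugacy $h$ of \thmref{mainthm5}, which fixes $0$ and sends $K(\tl\zeta)$ to $K(\hat\zeta)$.

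One caveat about your justification of the transfer step. The combination you propose (Mori's H\"older estimate plus $L^p$-integrability of the Jacobian) gives, after mapping $B_r(0)\setminus K(\tl\zeta)$ forward by $h$ and pulling back the area bound with the Jacobian of $h^{-1}$, an estimate of the form ${\rm area}(B_r(0)\setminus K(\tl\zeta))\le C\,r^{(2+\hat\delta)(1-1/p)/K}$, and the exponent $(2+\hat\delta)(1-1/p)/K$ need not exceed $2$; so the ``harmless loss of $\delta$'' you invoke is not harmless --- a naive absolute bound can degrade the exponent below $2$ and destroy deepness altogether. The correct (and standard) argument is scale-invariant: quasiconformal Jacobians are $A_\infty$ weights (reverse H\"older/Astala area distortion), so for $E\subset B$ one has $|h^{-1}(E)|/|h^{-1}(B)|\le C\left(|E|/|B|\right)^{a}$ with $C,a$ depending only on $K$ and uniform over scales; combined with the circular distortion (quasisymmetry) of $h^{-1}$, which says $h^{-1}(B_s(0))$ is squeezed between concentric balls of comparable radii, and Mori's H\"older bound relating the scales, this yields ${\rm area}(B_r(0)\setminus K(\tl\zeta))=O(r^{2+a\hat\delta/K})$, i.e.\ deepness with a positive exponent. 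This is precisely the quasiconformal invariance of measurable deep points used implicitly in the paper and developed in \cite{McM,McMBook}. Finally, note that the conformality of $h$ on ${\rm int}(K(\tl\zeta))$, which you call the crucial point, is not needed for this corollary --- plain $K$-quasiconformality suffices; conformality on the interior enters only in the next corollary, where Theorem 2.25 of \cite{McMBook} upgrades $h$ to being $C^{1+\alpha}$-conformal at the deep point.
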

\begin{proof}
By the results of \cite{McM}, $0$ is a measurable deep point of $K(\hat \zeta)$, and this property is preserved under a quasiconformal conjugacy. 
\end{proof}

We say that a quasiconformal map $\phi$ of subset of $\field{C}$ is $C^{1+\alpha}$-conformal at point $z$ if $\phi'(z)$ exists and 
$$\phi(z+t)=\phi(z)+\phi'(z) t+O(|t|^{1+\alpha}).$$

\begin{cor}
Let $h$ be the conjugacy from \thmref{mainthm5}. Then $h$ is $C^{1+\alpha}$-conformal at $0$.
\end{cor}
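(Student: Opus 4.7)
The plan is to deduce this directly from McMullen's rigidity theorem for deep points (see \cite{McM}), combining the two ingredients that have already been assembled in the preceding results. By \thmref{mainthm5}, the Beltrami coefficient of the quasiconformal conjugacy $h$ vanishes on $\operatorname{int}(K(\tilde\zeta))$, since $h$ is conformal there. By \corref{cor1}, the critical point $0$ is a measurable deep point of $K(\tilde\zeta)$, i.e.\ there exists $\delta>0$ with
\begin{equation*}
\operatorname{area}(B_r(0)\setminus K(\tilde\zeta))=O(r^{2+\delta}).
\end{equation*}

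First, I would invoke McMullen's theorem (Theorem~2.25 in \cite{McM}, or the analogous statement in his monograph on renormalization): a quasiconformal map $\phi$ defined on a neighborhood of a point $z_0$, whose complex dilatation is supported in a set $\Lambda^c$ such that $z_0$ is a measurable deep point of $\Lambda$, is $C^{1+\alpha}$-conformal at $z_0$, with $\alpha$ depending only on $\delta$ and the maximal dilatation $\|\mu_\phi\|_\infty$. Applied to $h$, with $\Lambda=\operatorname{int}(K(\tilde\zeta))\cup(\operatorname{int}(K(\tilde\zeta)))^c\cap(\text{interior filling})$ — more precisely, with $\Lambda=K(\tilde\zeta)$ up to a set of measure zero on its boundary, which is sufficient because the deep-point condition is measure-theoretic — we conclude that $h$ is $C^{1+\alpha}$-conformal at $0$.

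Second, to make the application legitimate, I would verify that $\mu_h$ indeed vanishes almost everywhere on $K(\tilde\zeta)$. On $\operatorname{int}(K(\tilde\zeta))$ this is the conformality clause in \thmref{mainthm5}. The boundary $\partial K(\tilde\zeta)$ has zero area (it is locally contained in the invariant quasi-arc produced by \thmref{mainthm4}, together with its preimages under the pair dynamics), so $\mu_h=0$ a.e.\ on $K(\tilde\zeta)$. Thus the hypotheses of McMullen's deep-point rigidity are satisfied at $z_0=0$.

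The only delicate step is matching the precise form of McMullen's deep-point theorem to our setting: we apply it in a small neighborhood of $0$ to a quasiconformal $h$ whose dilatation is supported off $K(\tilde\zeta)$. There is no obstacle beyond checking the hypotheses, as the heavy lifting has been done by \corref{cor1} (deep-point property transported from $\hat\zeta$) and \thmref{mainthm5} (conformality on the interior of the filled Julia set). The conclusion is then exactly the asymptotic expansion
\begin{equation*}
h(t)=h(0)+h'(0)\,t+O(|t|^{1+\alpha})\qquad\text{as }t\to 0,
\end{equation*}
which is the desired $C^{1+\alpha}$-conformality at $0$.
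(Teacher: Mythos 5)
Your proposal is correct and follows essentially the same route as the paper: the paper's proof likewise combines Corollary~\ref{cor1} (the measurable deep-point property at $0$) with \thmref{mainthm5} ($h$ is $K$-quasiconformal and conformal on ${\rm int}(K(\tl\zeta))$) and then applies McMullen's deep-point/Boundary Conformality Theorem (Theorem~2.25 in \cite{McMBook}) to get $C^{1+\alpha}$-conformality at $0$ with $\alpha=\alpha(K,\delta)$. Your extra remark that $\partial K(\tl\zeta)$ is a null set, so the dilatation vanishes a.e.\ on $K(\tl\zeta)$ and the measure-theoretic hypothesis is unaffected, is a harmless refinement the paper leaves implicit.
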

\begin{proof}
By Corollary $\ref{cor1}$ point $0$ is the measurable $\delta$-deep point of the measurable set ${\rm int}(K(\tl \zeta)$, while by \thmref{mainthm5} $h$ is a $K$-quasiconformal map, conformal on ${\rm int}(K(\tl \zeta)$.

By the Boundary Conformality Theorem (see Theorem 2.25 in \cite{McMBook}), there exists $\alpha=\alpha(K,\delta)$ such that $h$ is $C^{1+\alpha}$-conformal.
\end{proof}

\begin{cor}
The maps of the holomorphic pairs $\zeta_*$ and $\hat\zeta$ are analytic continuations of each other.
$$\cR^n\zeta_1\to\zeta_*,$$ geometrically fast in the uniform metric, where defined.
\end{cor}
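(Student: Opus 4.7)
The plan is to combine \thmref{mainthm5} with the $C^{1+\alpha}$-conformality of the conjugacy at the critical point (established in the preceding corollary) and with renormalization invariance to force the conjugacy between $\tl\zeta_*$ and $\hat\zeta$ to be the identity at $0$.

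First, apply \thmref{mainthm5} with $\zeta=\zeta_*$, which lies trivially in $W^s(\zeta_*)\cap W$ after possibly shrinking $W$. This produces the McMullen holomorphic pair extension $\tl\zeta_*$ of $\cP^3\zeta_*=\zeta_*$ together with a quasiconformal conjugacy $h$ between $\tl\zeta_*$ and $\hat\zeta$ with $h(0)=0$, which is conformal on $\mathrm{int}(K(\tl\zeta_*))$. By the corollary just above, $h$ is $C^{1+\alpha}$-conformal at the origin, so
$$h(z)=h'(0)z+O(|z|^{1+\alpha}).$$

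Second, both $\tl\zeta_*$ and $\hat\zeta$ are renormalization fixed points with common scaling factor $\lambda_*$, and $|\lambda_*|<1$ by part $(vi)$ of \thmref{mainthm1}. Renormalizing the conjugacy relation yields the sequence of quasiconformal conjugacies
$$h_n(z)=\lambda_*^{-n}h(\lambda_*^n z)=h'(0)z+O(|\lambda_*|^{n\alpha}|z|^{1+\alpha}),$$
each still conjugating $\tl\zeta_*$ to $\hat\zeta$ on a suitable neighborhood of $0$. As $n\to\infty$ the $h_n$ converge uniformly on compact subsets of a neighborhood of $0$ to the complex linear map $z\mapsto h'(0)z$, which is therefore itself a conjugacy between the two pairs in a neighborhood of $0$. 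In particular $h'(0)\xi_*(z)=\hat\xi(h'(0)z)$; evaluating at $z=0$ and invoking the normalization $\xi_*(0)=\hat\xi(0)=1$ forces $h'(0)=1$. Hence $\xi_*\equiv\hat\xi$ and $\eta_*\equiv\hat\eta$ near $0$, and by the identity principle the maps of $\zeta_*$ and $\hat\zeta$ agree on the connected components of the intersection of their domains that contain $0$, establishing the first claim.

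For the convergence assertion, by McMullen's theorem $\cR^n\zeta_1\to\hat\zeta$ geometrically fast in the Carath\'eodory topology on holomorphic pairs. Since $\hat\zeta$ coincides with $\zeta_*$ as an analytic object on the common domain, on any compact set $K$ contained in the domain of $\zeta_*$ the iterates $\cR^n\zeta_1$ are eventually defined on a neighborhood of $K$ and converge uniformly there to $\zeta_*$, with the same geometric rate.

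The main obstacle is the second step: one must verify carefully that the almost-commuting-pair renormalization $\cR$ of this paper and the McMullen holomorphic-pair renormalization agree at the fixed point (so that the same scaling constant $\lambda_*$ governs both and the renormalized conjugacies $h_n$ are genuine conjugacies on a nested sequence of neighborhoods of $0$), and that the $h_n$ stay inside the domain of definition of $h$ so the expansion can be iterated. Once this compatibility of the two renormalization frameworks on holomorphic pairs is in hand, the remaining steps are routine applications of the identity principle and McMullen's convergence theorem.
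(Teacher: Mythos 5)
The paper states this corollary without any written proof, so there is no official argument to compare against; what you propose is the natural argument implicit in the text (combine \thmref{mainthm5} and the two preceding corollaries with the exact self-similarity of the two fixed points, blow up the conjugacy at the deep point $0$, and finish with the identity principle and McMullen's convergence theorem), and in outline it is sound.

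Two points, however, need repair before the proof is complete. First, the blow-up formula $h_n(z)=\lambda_*^{-n}h(\lambda_*^n z)$ is not literally correct: the rescaling in this paper's renormalization is the anti-linear map $\lambda_*\circ c$ (and McMullen's operator carries its own rescaling), so you should either pass to $\cR^2$, whose rescaling is multiplication by the positive number $|\lambda_*|^2$, or conjugate $h$ by the respective rescalings of the two fixed points on the two sides; as written, your formula already presupposes that both fixed points are rescaled by the same $\lambda_*$. Second --- and this is exactly the issue you flag as ``the main obstacle'' and then leave open --- the agreement of the scaling constants and normalizations of $\zeta_*$ and $\hat\zeta$ should not be taken as an external compatibility fact to be verified separately: it is a consequence of the conjugacy itself. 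Since $h(0)=0$, $h$ conjugates the two pairs near $0$ and is $C^{1+\alpha}$-conformal at $0$ with $h'(0)\neq 0$ (this nonvanishing, which you use implicitly when dividing by $h'(0)$, is part of McMullen's boundary conformality statement and should be made explicit), differentiating the conjugated self-similarity relations at $0$ forces McMullen's scaling constant to equal $\lambda_*$ up to the complex-conjugation convention. With that in hand, the correctly rescaled $h_n$ are genuine conjugacies on a fixed neighborhood of $0$ for all large $n$ --- your worry about staying inside the domain of $h$ is vacuous, since the rescalings contract toward $0$ --- and converge to $z\mapsto h'(0)z$; the normalization $\xi_*(0)=1$ then gives $h'(0)=\hat\xi(0)$, i.e.\ $h'(0)=1$ once $\hat\zeta$ is taken in the same normalization (a convention you assume silently), and the identity principle together with McMullen's geometric convergence completes both claims as you indicate.
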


\section{Renormalization for  pairs of two-dimensional dissipative maps}\label{sec:RenACM}
Let $\Omega, \Gamma$ be  domains in $\CC^2$. We denote $O(\Omega,\Gamma)$ the Banach space of pairs of bounded analytic functions 
$F=(F_1(x,y),F_2(x,y))$  from $\Omega$ and $\Gamma$ respectively to $\CC^2$ equipped with the norm
\begin{equation}
\label{eq:unormm}\| F\|= \frac{1}{2} \left(   \sup_{(x,y) \in \Omega}|F_1(x,y)|+  \sup_{(x,y)\in \Gamma} |F_2(x,y) | \right).
\end{equation}
We let $O(\Omega,\Gamma,\delta)$ stand for the $\delta$-ball around the origin in this Banach space. 

In what follows, we fix $W_1$, $Z_1$, $\tl Z$, and $\tl W$ as in \thmref{mainthm3}, and $R>0$ such that $\DD_R\subset Z_1\cap W_1$, and let 
$\Omega=Z_1 \times \DD_R$, $\Gamma=W_1 \times \DD_R$.
We select $\hat Z_1$ and $\hat W_1$ so that
$$Z_1 \Subset \hat Z_1\Subset \tl Z,\; W_1\Subset \hat W_1\Subset \tl W.$$
We define an isometric embedding $\iota$ of the space $\cH(Z_1,W_1)$ into $O(\Omega,\Gamma)$ which send the pair $\zeta=(\eta,\xi)$ to
the  pair of functions $\iota(\zeta)$:
\begin{equation}
\label{eq:embed}
\left(\left(x \atop  y\right)\mapsto \left(\eta(x) \atop \eta(x) \right), 
\left(x \atop  y\right)\mapsto \left(\xi(x) \atop \xi(x) \right)   \right).
\end{equation}

Let $\cU$ be an open neighborhood of $\zeta_*$ in $\cC(Z_1,W_1)$, and let $Q$ be a neighborhood of $0$ in $\CC$.
We will consider an open subset of  $O(\Omega,\Gamma)$ of pairs of maps of the form
\begin{eqnarray}
\label{eq:A} A(x,y)&=&(a(x,y),h(x,y))=(a_y(x),h_y(x)),\\
\label{eq:B} B(x,y)&=&(b(x,y),g(x,y))=(b_y(x),g_y(x)), 
\end{eqnarray}
such that
\begin{itemize}
\item[1)] the pair $(a(x,y),b(x,y))$ is in a $\delta$-neighborhood  of  $\cU$ in $\cH(Z_1,W_1)$,
\item[2)]  $(h,g) \in O(\Omega,\Gamma)$ are such that  $|\partial_x h(x,0)|>0$ and  $|\partial_x g(x,0)|>0$ whenever $x \notin \bar Q$,  and 
$$(h(x,y)-h(x,0),g(x,y)-g(x,0)) \in \cO(\Omega,\Gamma,\delta).$$ 
\end{itemize}
This open subset of $O(\Omega,\Gamma)$ will be denoted $\cB(\cU,Q,\delta)$ for brevity.

Given a pair $\Sigma=(A,B)$ as in $(\ref{eq:A})-(\ref{eq:B})$ we set 
$$\cL \Sigma \equiv (a(x,0),b(x,0)).$$

\subsection{The first step in defining renormalization: appropriate coordinate transformations}\label{transformations}
We will now define a renormalization operator on pairs of 2D maps in several steps. Similarly to \cite{CLM}, the first 
step will be to define  a sufficiently high pre-renormalization in a neighborhood of $(\eta^{-1}(0),0)$, and then pull such a 
pair back to neighborhood of $(0,0)$ by a coordinate change which will reduce the order of magnitude of the
 dependence of our pair on the $y$-coordinate.

Let $(\eta,\xi) \in \cB(Z_1,W_1)$ be $n\geq 2$ times renormalizable,  and consider its $n$-th renormalization written as
$$\cR^n\zeta=\lambda_n^{-1}\circ \left(\eta \circ \xi \circ \zeta^{\bar l_n}, \eta \circ \xi \circ \zeta^{\bar m_n } \right) \circ \lambda_n,$$
where $\lambda_n$ is the appropriate (anti-)linear transformation. 

 For a  sufficiently large $n$, the function $\eta^{-1}$ is a diffeomorphism of the neighborhood $\lambda_n (Z_1 \cup W_1)$, and one can define the $n$-th pre-renormalization of $\zeta$ on $\eta^{-1}(\lambda_n (Z_1 \cup W_1))$ as 
$$\hat p \cR^n\zeta=\left(\xi \circ \zeta^{\bar l_n} \circ \eta, \xi \circ \zeta^{\bar m_n } \circ \eta \right).$$

Next, suppose that $\Sigma=(A,B)$ lies in $\cB(\cU,Q,\delta)$ with $\cU$ and $\delta$ sufficiently small, so that 
the following pre-renormalization  is defined in a neighborhood of $\eta^{-1}(\lambda_n (Z_1 \cup W_1))\times \{ 0\}$:
$$\hat p \cR^n \Sigma = \left(B \circ \Sigma^{\bar l_n} \circ A, B \circ \Sigma^{\bar m_n} \circ A \right).$$

We will denote $$\pi_1(x,y)=x\text{ and }\pi_2(x,y)=y.$$
Set
$$\phi_y(x) := \pi_1 \circ A \circ B(x,y) \equiv a(b(x,y),g(x,y)).$$
For sufficiently small $Q$ and $\delta$ and for all  $z \in \DD_r$ for some large $r$, the map $\phi_z$ is close to $\eta \circ \xi$ and is a diffeomorphism of a neighborhood of $\pi_1 \Sigma^{\bar l_n}(\lambda_n (Z_1),0) \approx \zeta^{\bar l_n}(\lambda_n (Z_1))$.  Similarly,  for all $z \in \DD_r$ for some large $r$, the map $g_z$ is a diffeomorphism of  a neighborhood of $\pi_1  \Sigma^{\bar l_n}(\lambda_n (Z_1),0)$. We can chose the constants to guarantee that $r>2R$.

Furthermore, let
$$w_z(x)\equiv w(x,z) := g_{z}\left(\phi_{z}^{-1}( x) \right).$$
This is a diffeomorphism of a neighborhood of   $\pi_1 A \circ B \circ \Sigma^{\bar l_n}(\lambda_n (Z_1),0)$ in $\CC^2$ onto its image for all $z \in \DD_R$ for some large $R>0$.  Notice, that $\partial_z  w_z(x)$ and $\partial_z w^{-1}_z(x)$ are functions whose uniform norms are $O(\delta)$.

We are now ready to define our first change of coordinates:
$$H(x,y) =  (a_y(x),w_{g_0^{-1}(y)}^{-1}(y)),$$
where the function $g^{-1}_z$ is defined by 
\begin{equation}\label{g_inv}
g_z^{-1}(g(x,z))=x
\end{equation}
outside a neighborhood of $g(Q,0)$ in  $\CC$. The transformation $H$ is close to $\left(\eta(x),\eta(\xi(g_0^{-1}(y))) \right)$, and therefore, for small $\delta$,  is a diffeomorphism of a neighborhood of $$\pi_1  B \circ \Sigma^{\bar l_n}(\lambda_n (Z_1),0) \approx \xi(\zeta^{\bar l_n}(\lambda_n (Z_1)))$$ onto its image. Notice that
\begin{eqnarray}
\label{correction} w_{g_0^{-1}(y)}^{-1}(y)-w_{g_z^{-1}(y)}^{-1}(y) \sim \left( \partial_z w_{g_z^{-1}(y)}^{-1}(y)  \right) \left(\partial_z g_z^{-1}(y)  \right) z&=&O(\delta^2) O(z),\\
\label{comp} w_{z_1}^{-1} \circ w_{z_2} -id \sim \partial_z w_z (z_2-z_1)&=&O(\delta) O(z_2-z_1).
\end{eqnarray}

 We use $H(x,y)$ to pull back $\hat p \cR^n\Sigma$ to a neighborhood of definition of the $n$-th pre-renormalization of an almost commuting pair $(\eta,\xi)$ - that is, a neighborhood of  $\lambda_n (Z_1 \cup W_1)$ in $\CC^2$:
$$p \cR^n\Sigma\equiv(\bar A, \bar B) = H \circ \left( B \circ \Sigma^{\bar l_n} \circ A, B \circ \Sigma^{\bar m_n} \circ  A \right) \circ H^{-1}(x,y).$$

The first map in this pair becomes
\begin{eqnarray}
\nonumber  \bar A(x,y)&=&H \circ B \circ \Sigma^{\bar l_n} \circ A \circ H^{-1}(x,y) \\
\nonumber &=&\left( { \pi_1 A \circ B \circ \Sigma^{\bar l_n}  \atop    w_{g_0^{-1}( \pi_2 B \circ \Sigma^{\bar l_n}  )}^{-1}  \circ \pi_2 B \circ \Sigma^{\bar l_n} } \right) \circ A \circ H^{-1}(x,y) \\
\nonumber &=&\left( { \pi_1 A \circ B \circ \Sigma^{\bar l_n}  \atop    w_{(g_0^{-1} \circ g) \circ \Sigma^{\bar l_n}  }^{-1}  \circ \pi_2 B \circ \Sigma^{\bar l_n} } \right) \circ A \circ H^{-1}(x,y) \\
\nonumber &{(\ref{correction}) \atop =}&\left( { \pi_1 A \circ B \circ \Sigma^{\bar l_n}  \atop    w_{ {(g_{ \pi_2 \Sigma^{\bar l_n}  }^{-1} \circ g) \circ \Sigma^{\bar l_n}  }  }^{-1}  \circ \pi_2 B \circ \Sigma^{\bar l_n} +O(\delta^2)} \right) \circ A \circ H^{-1}(x,y) \\
\nonumber & {(\ref{g_inv}) \atop =}& \left( { \pi_1 A \circ B \circ \Sigma^{\bar l_n}  \atop   w_{\pi_1 \Sigma^{\bar l_n} }^{-1}   \circ g_{\pi_2 \Sigma^{\bar l_n} } \circ \pi_1 \Sigma^{\bar l_n} +O(\delta^2)} \right) \circ A \circ H^{-1}(x,y) \\
\nonumber  &=&\left( { \pi_1 A \circ B \circ \Sigma^{\bar l_n}   \atop     w_{\pi_1 \Sigma^{\bar l_n} }^{-1}  \circ w_{\pi_2 \Sigma^{\bar l_n} }\circ \phi_{\pi_2 \Sigma^{\bar l_n}} \circ \pi_1 \Sigma^{\bar l_n} +O(\delta^2)} \right) \circ A \circ H^{-1}(x,y) \\
\nonumber  &{(\ref{comp}) \atop =}&\left( { \pi_1 A \circ B \circ \Sigma^{\bar l_n}   \atop  \pi_1 A \circ B \circ \Sigma^{\bar l_n} +O(\delta) O\left({\pi_1 \Sigma^{\bar l_n}} -{\pi_2 \Sigma^{\bar l_n} }\right)+O(\delta^2)} \right) \circ A \circ H^{-1}(x,y).
\end{eqnarray}

\noindent
 We remark that the inverse $H^{-1}(x,y)=H^{-1}_{appr}(x,y)+O(\delta^2)$, where
$$H^{-1}_{appr}(x,y):=\left(a^{-1}_{w_{\phi_0^{-1}(y)}(y)}(x), w_{\phi_0^{-1}(y)}(y) \right).$$
Indeed, set  $\chi(y):=w_{\phi_0^{-1}\left(w^{-1}_{g_0^{-1}(y)}(y) \right)} \left(w^{-1}_{g_0^{-1}(y)}(y)\right)$, then we can use the fact that $\phi_0^{-1}=g_0^{-1} \circ w_0$, together with (\ref{comp}),  in the following computation:
\begin{eqnarray}
\nonumber \chi(y)&=&w_{g_0^{-1} \circ \left(w_0 \circ w^{-1}_{g_0^{-1}(y)}\right)(y)} \left(w^{-1}_{g_0^{-1}(y)}(y)\right)=w_{g_0^{-1}\left(y+O(\delta)\right)} \left(w^{-1}_{g_0^{-1}(y)}(y)\right) \\
\nonumber &=&w_{g_0^{-1}(y)+O(\delta)} \left(w^{-1}_{g_0^{-1}(y)}(y)\right)=y+O(\delta^2),
\end{eqnarray}
therefore,
\begin{eqnarray}
\nonumber H_{appr}^{-1} \circ H(x,y)&=& \left(a^{-1}_{w_{\phi_0^{-1} \circ \pi_2} \circ \pi_2 } \circ \pi_1, w_{\phi_0^{-1} \circ \pi_2} \circ \pi_2 \right) \circ (a_y(x),w_{g_0^{-1}(y)}^{-1}(y)) \\
\nonumber &=&\left(a^{-1}_{\chi(y)}(a_y(x)), \chi(y)\right) \\
\nonumber &=&(x,y)+O(\delta^2).
\end{eqnarray}
We now have
$$A \circ H^{-1}(x,y)=(a,h) \circ \left(a^{-1}_{w_{\phi_0^{-1}(y)}(y)}(x), w_{\phi_0^{-1}(y)}(y) \right) +O(\delta^2)=(x,h(H^{-1}_{appr}(x,y)))+O(\delta^2),$$
\noindent
and
$$ \bar{A}(x,y)= \left( { \pi_1 A \circ B \circ \Sigma^{\bar l_n} \atop  \pi_1 A \circ B \circ \Sigma^{\bar l_n} + O(\delta) \ O\!\left({\pi_1 \Sigma^{\bar l_n}} -{\pi_2 \Sigma^{\bar l_n} }\right) } \right)(x,h(H^{-1}_{appr}(x,y)))+O(\delta^2).$$



Similarly,
\begin{eqnarray}
\nonumber \bar{B}(x,y)&=& H \circ B \circ \Sigma^{\bar m_n}\circ A \circ H^{-1}(x,y) \\
\nonumber &=&\left( { \pi_1 A \circ B \circ \Sigma^{\bar m_n} \atop  \pi_1 A \circ B \circ \Sigma^{\bar m_n} + O(\delta) \ O\!\left({\pi_1 \Sigma^{\bar m_n}} -{\pi_2 \Sigma^{\bar m_n} }\right)} \right)(x,h(H^{-1}_{appr}(x,y)))+O(\delta^2).
\end{eqnarray}

\noindent
We have thus proved the following:
\begin{lem}
\label{lem-preren}
There exists an $n \in \NN$, and a choice of $\cU$, $Q$, and $\delta_0$ such that there exists $C>0$ such that the following holds. For every $\delta<\delta_0$ and every $\Sigma\in\cB(\cU,Q,\delta)$ the pair $p\cR^n\Sigma$ is defined, lies in $O(\hat\Omega,\hat\Gamma)$, and 
$$\dist(p \cR^n \Sigma, \iota(\cH(\lambda_n(\hat Z_1),\lambda_n(\hat W_1))))<C \delta \left( \|\pi_1 \Sigma -\pi_2 \Sigma \|  +\delta \right).$$
\end{lem}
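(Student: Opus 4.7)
The plan is to exploit the explicit form of $\bar A$ and $\bar B$ derived in the preceding calculation and bound their distance from a well-chosen element of $\iota(\cH(\lambda_n\hat Z_1,\lambda_n\hat W_1))$. The argument will proceed in three stages.

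First, I select $n$ and establish that $p\cR^n\Sigma$ is well-defined and takes values in $O(\hat\Omega,\hat\Gamma)$. By \thmref{mainthm3} the operator $\cRG$ is anti-analytic on an open neighborhood of $\zeta_*$ in $\cC(Z_1,W_1)$ and sends it into $\cC(\tl Z,\tl W)$, so I may pick $n$ large enough that the 1D pre-renormalization $\hat p\cR^n\zeta$ is defined on a neighborhood of $\eta^{-1}(\lambda_n(Z_1\cup W_1))$ for every $\zeta$ in a small neighborhood $\cU$ of $\zeta_*$. Shrinking $\cU$, $Q$, and $\delta_0$ further, the auxiliary maps $\phi_z$, $g_z$, $w_z$, and hence the coordinate change $H$, remain biholomorphisms of the required neighborhoods for every $\Sigma\in\cB(\cU,Q,\delta_0)$; this guarantees that $p\cR^n\Sigma=(\bar A,\bar B)\in O(\hat\Omega,\hat\Gamma)$.

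Second, I identify the nearest element of $\iota(\cH(\lambda_n\hat Z_1,\lambda_n\hat W_1))$ as the image under $\iota$ of the 1D pair $(\bar\eta,\bar\xi)$ defined by $\bar\eta(x):=\pi_1\bar A(x,0)$ and $\bar\xi(x):=\pi_1\bar B(x,0)$. The distance from $\bar A$ to the corresponding component of $\iota(\bar\eta,\bar\xi)$, and analogously for $\bar B$, splits into two contributions: the off-diagonal gap $\pi_2\bar A-\pi_1\bar A$, which by the displayed formula preceding the lemma equals $O(\delta)\cdot O(\pi_1\Sigma^{\bar l_n}-\pi_2\Sigma^{\bar l_n})+O(\delta^2)$; and the $y$-dependence $\pi_1\bar A(x,y)-\pi_1\bar A(x,0)$, which is driven entirely by the $y$-dependence of $A$, $B$, and the intermediate iterates of $\Sigma$ appearing inside $H^{-1}_{\mathrm{appr}}$ (each of which is $O(\delta)$ by the definition of $\cB(\cU,Q,\delta)$), and is therefore itself $O(\delta^2)$.

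Third, I establish the propagation estimate
$$\bigl\|\pi_1\Sigma^{\bar l_n}-\pi_2\Sigma^{\bar l_n}\bigr\|\;\le\; C'\bigl(\|\pi_1\Sigma-\pi_2\Sigma\|+\delta\bigr)$$
by a finite induction on the length of the fixed multi-index $\bar l_n$. At each compositional step the identity $\pi_i(\Sigma\circ\Psi)(x,y)=\pi_i\Sigma(\pi_1\Psi(x,y),\pi_2\Psi(x,y))$ lets me write the off-diagonal difference as $(\pi_1\Sigma-\pi_2\Sigma)(\Psi)$ plus a correction $O(\delta)\|\pi_1\Psi-\pi_2\Psi\|$ arising from the $O(\delta)$ $y$-derivative of $\pi_2\Sigma$, and iterating yields the uniform constant $C'$. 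Combining the three stages produces the claimed bound $C\delta(\|\pi_1\Sigma-\pi_2\Sigma\|+\delta)$. The main technical difficulty is ensuring that every intermediate composition $\Sigma^{\bar l_k}$, $k\le n$, keeps its image inside $\Omega\sqcup\Gamma$ so that the recursion is legitimate; this is what forces $\cU$, $Q$, and $\delta_0$ to be taken sufficiently small, and ultimately relies on the hyperbolic a priori control of the 1D orbit of $\zeta\in\cU$ furnished by \thmref{mainthm3}.
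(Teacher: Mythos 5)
Your proposal is correct and follows essentially the same route as the paper: there the lemma is simply the summary of the preceding computation of $\bar A$ and $\bar B$ through the coordinate change $H$ and its approximate inverse $H^{-1}_{\mathrm{appr}}$, with the comparison one-dimensional pair taken to be the $y=0$ slice of the first coordinates and the same $O(\delta)\cdot O(\delta)$ bookkeeping producing the $O(\delta^{2})$ bound on the $y$-dependence and the $O(\delta)\,O(\pi_1\Sigma^{\bar l_n}-\pi_2\Sigma^{\bar l_n})$ off-diagonal term. Your third stage can be streamlined: since $\pi_1(X\circ\Psi)-\pi_2(X\circ\Psi)=(\pi_1X-\pi_2X)\circ\Psi$ exactly, where $X\in\{A,B\}$ is the outermost letter of the word $\Sigma^{\bar l_n}$, one gets $\|\pi_1\Sigma^{\bar l_n}-\pi_2\Sigma^{\bar l_n}\|\le\|\pi_1\Sigma-\pi_2\Sigma\|$ directly, with no induction on the multi-index and no extra $\delta$.
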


Let us write
\begin{equation}
\bar{A}(x,y)=\left( { \bar \eta_1(x)+\bar \tau_1(x,y) \atop   \bar \eta_2(x)+\bar \tau_2(x,y) } \right),
\end{equation}
where 
$$\bar \eta_1(x) \equiv \pi_1 \bar{A}(x,0), \quad  \bar \eta_2(x) \equiv \pi_2 \bar{A}(x,0)$$
are $O\left( \delta \left( \|\pi_1 \Sigma -\pi_2 \Sigma \|  +\delta \right) \right)$-close to each other, and both are $\delta$-close to $\pi_\eta p \cR^n \zeta=\eta \circ \xi \circ \zeta^{\bar l_n}$, and 
$$\bar \tau_1(x,y) \equiv  \pi_1 \bar{A}(x,y)- \pi_1 \bar{A}(x,0), \quad \bar \tau_2(x,y)= \pi_2 \bar{A}(x,y)- \pi_2 \bar{A}(x,0),$$
are functions whose norms are $O(\delta^2)$. Similarly,
$$
\nonumber \bar{B}(x,y)=\left( { \bar \xi_1(x)+\bar \pi_1x,y) \atop   \bar \xi_2(x)+\bar \pi_2(x,y) } \right),
$$
where
$$\bar \xi_1(x) \equiv \pi_1 \bar{B}(x,0), \quad  \bar \xi_2(x) \equiv \pi_2 \bar{B}(x,0)$$
are $O\left( \delta \left( \|\pi_1 \Sigma -\pi_2 \Sigma \|  +\delta \right) \right)$-close to each other, and both are $\delta$-close to $\pi_\xi p \cR^n \zeta=\eta \circ \xi \circ \zeta^{\bar m_n}$, and 
$$\bar \pi_1(x,y) \equiv  \pi_1 \bar{B}(x,y)- \pi_1 \bar{B}(x,0), \quad \bar \pi_2(x,y)= \pi_2 \bar{B}(x,y)- \pi_2 \bar{B}(x,0),$$
are functions whose norms are $O(\delta^2)$.

\subsection{Critical projection} By the Argument Principle, 
if $\delta$ is sufficiently small, then 
the function $\pi_1 \bar B \circ \bar A (x,0)$ has a unique critical point $c_1$ in a neighborhood of $0$. Set $T_1(x,y)=(x+c_1,y)$, then
$$\partial_x \left(\pi_1 T_1^{-1} \circ \bar B \circ \bar A \circ T_1  \right)(0,0)=0.$$ Similarly, if $\delta$ is sufficiently small, the function $\pi_1 T_1^{-1} \circ \bar A \circ \bar B \circ T_1(x,0)$ has a unique critical point $c_2$ in a neighborhood of $0$.  Set $T_2(x,y)=(x+c_2,y)$, then
$$\partial_x \left(\pi_1 T_2^{-1} \circ T_1^{-1} \circ \bar A \circ \bar B \circ T_1  \circ T_2 \right)(0,0)=0.$$

We now set
\begin{eqnarray}
\nonumber \Pi_1(\bar A, \bar B)&=&(\tilde A, \tilde B):=(T_2^{-1} \circ T_1^{-1} \circ \bar A \circ T_1, T^{-1}_1 \circ \bar B \circ T_1 \circ T_2)\\
\nonumber &=& \left( \left( \tilde \eta_1(x) + \tilde \tau_1(x,y) \atop   \tilde \eta_2(x) + \tilde \tau_2(x,y)  \right), \left( \tilde \xi_1(x) + \tilde \pi_1(x,y) \atop   \tilde \xi_2(x) + \tilde \pi_2(x,y)  \right)   \right),
\end{eqnarray}
where the norms of the functions $\tilde \tau_k$, $\tilde \pi_k$, $k=1,2$, are $O(\delta^2)$. 

According to the discussion at the end of Section \ref{transformations}, the critical points  of the functions $\pi_1 (\bar A \circ \bar B)(x,0)$ and $\pi_1 (\bar B \circ \bar A)(x,0)$ are $O\left( \delta \left( \|\pi_1 \Sigma -\pi_2 \Sigma \|  +\delta \right) \right)$-close to each other, and therefore, 
\begin{equation}
\label{T2small} T_2=\text{Id}+O\left( \delta \left( \|\pi_1 \Sigma -\pi_2 \Sigma \|  +\delta \right) \right).
\end{equation}
At the same time, since $\pi_1 (\bar A \circ \bar B)(x,0)$  is $\delta$-close to the map $(\pi_\xi p \cR^n \zeta) \circ (\pi_\eta p \cR^n \zeta)$, $T_1=\text{Id}+O\left( \delta \right)$. Furthermore,  
\begin{eqnarray}
\nonumber 0&=&\partial_x (\pi_1 \tilde A \circ \tilde B)(0,0)\\
\nonumber &=&\left( \partial_x (\pi_1 \tilde A) \circ \tilde B(0,0)\right)  \partial_x (\pi_1 \tilde B)(0,0)+\left( \partial_y (\pi_1 \tilde A) \circ \tilde B(0,0) \right) \partial_x (\pi_2 \tilde B)(0,0) \\
\nonumber &=&O(1) \hspace{0.5mm} \partial_x (\pi_1 \tilde B)(0,0) + O(\delta^2) \implies  \partial_x (\pi_1 \tilde B)(0,0)=O(\delta^2),
\end{eqnarray}
and similarly for $ \partial_x (\pi_1 \tilde A)(0,0)$. We, therefore, have the following
\begin{lem}
\label{lem-proj1}
There exists an $n \in \NN$, a neighborhood $\cB(\cU,Q,\delta)$ of $\zeta_*$, and $C>0$, such that:
\begin{itemize}
\item for every $\Sigma \in \cB(\cU,Q,\delta)$ there exists $p\cR^n\Sigma\in O(\hat\Omega,\hat\Gamma)$;
\item the projection $\Pi_1 p\cR^n$ is a  well-defined and analytic operator  $$\cB(\cU,Q,\delta)\to O(\hat\Omega,\hat\Gamma);$$
\item for every $\Sigma \in \cB(\cU,Q,\delta)$,
$$\dist(\Pi_1 p \cR^n \Sigma, \iota(\cC(\lambda_n(\hat Z_1),\lambda_n(\hat W_1))))<C \delta \left( \|\pi_1 \Sigma -\pi_2 \Sigma \|  +\delta \right)$$;
\item for every $\Sigma \in \iota(\cC(\lambda_n(\hat Z_1),\lambda_n(\hat W_1)))$, the composition
$$\Pi_1 p \cR^n \Sigma =p \cR^n \Sigma.$$
\item for every  pair $\Sigma=(A,B)$ such that $A\circ B=B\circ A$ we have
$$\Pi_1 p \cR^n \Sigma =T_1^{-1} \circ  p \cR^n \Sigma \circ T_1.$$
\end{itemize}
\end{lem}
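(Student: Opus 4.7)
The plan is to deduce all five assertions from Lemma~\ref{lem-preren} (which already gives the first bullet, namely existence of $p\cR^n\Sigma\in O(\hat\Omega,\hat\Gamma)$ together with the distance bound for the pre-renormalization), combined with an Implicit Function Theorem analysis of the selections $c_1(\Sigma), c_2(\Sigma)$, and the perturbation estimate (\ref{T2small}) already displayed in the text.

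\emph{Analyticity of $\Pi_1\,p\cR^n$.} The translations $T_1, T_2$ are determined by the critical-point equations
$$\partial_x\pi_1\bigl(\bar B\circ\bar A\bigr)(c_1,0)=0,\qquad \partial_x\pi_1\bigl(T_1^{-1}\!\circ\bar A\circ\bar B\circ T_1\bigr)(c_2,0)=0.$$
At $\Sigma=\iota(\zeta_*)$, these reduce to the one-variable critical-point equations for $\xi_*\circ\eta_*$ and $\eta_*\circ\xi_*$ at~$0$. The factored form $\eta_*=\phi_*\circ q_2$, $\xi_*=\psi_*\circ q_2$ from Section~\ref{sec:ACSM} forces quadratic critical points at $0$ with $\eta_*''(0),\xi_*''(0)\neq 0$, so both compositions have \emph{simple} critical points at~$0$. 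The Implicit Function Theorem in Banach spaces then produces analytic selections $c_i=c_i(\Sigma)$ on a sufficiently small neighborhood of $\iota(\zeta_*)$. Combined with the analyticity of $p\cR^n$ itself (all intermediate ingredients $\phi_z^{-1}, g_z^{-1}, w_z$ from Section~\ref{transformations} depend analytically on $\Sigma$ via standard holomorphic inverses away from critical loci), this yields analyticity of $\Pi_1\,p\cR^n$ on $\cB(\cU,Q,\delta)$.

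\emph{Distance bound and special cases.} The submanifold $\iota\bigl(\cC(\lambda_n(\hat Z_1),\lambda_n(\hat W_1))\bigr)$ is invariant under conjugation by any pure $x$-translation, so the $T_1$-conjugation in $\Pi_1$ does not affect the distance to it. The remaining pre/post-compositions by $T_2$ contribute at most $O\bigl(\delta\,(\|\pi_1\Sigma-\pi_2\Sigma\|+\delta)\bigr)$ in the uniform norm by (\ref{T2small}), and combining with Lemma~\ref{lem-preren} and enlarging $C$ gives the third bullet. For the fourth bullet, if $\Sigma=\iota(\zeta)$ with $\zeta\in\cC(\lambda_n(\hat Z_1),\lambda_n(\hat W_1))$, the embedding (\ref{eq:embed}) gives coordinates that agree and are independent of $y$, while the defining condition $\eta'(0)=\xi'(0)=0$ of $\cC$ places the critical points of $\pi_1(\bar B\circ\bar A)(\cdot,0)$ and $\pi_1(\bar A\circ\bar B)(\cdot,0)$ at $0$, so $c_1=c_2=0$ and $\Pi_1\,p\cR^n\Sigma=p\cR^n\Sigma$. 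For the fifth bullet, if $A\circ B=B\circ A$ then every pair of elements of the monoid generated by $A, B$ commutes; since $\bar A,\bar B$ are obtained by conjugating such elements by the common coordinate change $H$, they commute as well, so $\bar A\circ\bar B=\bar B\circ\bar A$, their critical-point equations coincide, and hence $c_2=0$, $T_2=\text{Id}$, giving $\Pi_1\,p\cR^n\Sigma=T_1^{-1}\circ p\cR^n\Sigma\circ T_1$.

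The main technical point is the Implicit Function Theorem step: one needs uniqueness of $c_1(\Sigma), c_2(\Sigma)$ inside a single fixed neighborhood of $0$ valid for all $\Sigma\in\cB(\cU,Q,\delta)$, not just local existence. This uses the non-degeneracy of the critical points of $\eta_*\circ\xi_*$ and $\xi_*\circ\eta_*$ at $0$ together with the Argument Principle argument already sketched just above the statement of the lemma, which rules out spurious critical points in a definite punctured neighborhood of $0$ for all sufficiently small $\delta$.
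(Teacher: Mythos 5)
Most of your proposal tracks the paper: the first bullet is indeed quoted from Lemma~\ref{lem-preren}, the uniqueness and analytic dependence of $c_1,c_2$ via the Argument Principle/non-degeneracy of the critical points is exactly the paper's route (your IFT phrasing is equivalent, modulo the small imprecision that simplicity of the critical point of the compositions needs $\xi_*'(\eta_*(0))\neq 0$ as well, not just $\eta_*''(0)\neq 0$), and your treatment of the fourth and fifth bullets is what the paper intends.

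The genuine gap is in your justification of the third bullet. The claim that $\iota\bigl(\cC(\lambda_n(\hat Z_1),\lambda_n(\hat W_1))\bigr)$ is invariant under conjugation by a pure $x$-translation, so that ``the $T_1$-conjugation does not affect the distance,'' is false. With the embedding (\ref{eq:embed}), $\iota(\eta)(x,y)=(\eta(x),\eta(x))$, and for $T(x,y)=(x+c,y)$ one gets
$$T^{-1}\circ\iota(\eta)\circ T\,(x,y)=\bigl(\eta(x+c)-c,\ \eta(x+c)\bigr),$$
whose two output coordinates differ by $c$, so the conjugate is not in the image of $\iota$ at all once $c\neq 0$; moreover the underlying one-dimensional map $x\mapsto\eta(x+c)-c$ has its critical point at $-c$ rather than $0$, so it also leaves $\cC$. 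Since $c_1$ is only $O(\delta)$ (the paper itself only gets $T_1=\mathrm{Id}+O(\delta)$), conjugation by $T_1$ can move the pair by order $\delta$ relative to $\iota(\cC)$, which is larger than the target bound $C\,\delta\,(\|\pi_1\Sigma-\pi_2\Sigma\|+\delta)$; so the bound cannot be obtained by declaring the $T_1$-conjugation harmless and only estimating $T_2$ via (\ref{T2small}). The paper's argument for this bullet is different and more quantitative: it uses the explicit representation of $\bar A,\bar B$ with $y$-dependent corrections $\bar\tau_k,\bar\pi_k=O(\delta^2)$, the estimates $T_1=\mathrm{Id}+O(\delta)$ and $T_2=\mathrm{Id}+O\bigl(\delta(\|\pi_1\Sigma-\pi_2\Sigma\|+\delta)\bigr)$, and then, crucially, the computation that after the projection $\partial_x(\pi_1\tilde A)(0,0)$ and $\partial_x(\pi_1\tilde B)(0,0)$ are $O(\delta^2)$; it is this normalization estimate that upgrades the Lemma~\ref{lem-preren} bound from distance to $\iota(\cH)$ to distance to $\iota(\cC)$ (pairs with critical points at $0$). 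Your proposal contains no step playing that role, so the third bullet remains unproved as written; to repair it you would need to track how the translation constants $c_1,c_2$ enter both output coordinates and combine that with the $O(\delta^2)$ bound on the derivatives at $0$ after projection.
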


\subsection{An almost commutation condition, and the second projection}
Let us set 
$$\tl\Sigma=(\tl A,\tl B)=\Pi_1 p \cRG^n\Sigma.$$
At the next step we will project the pair $(\tilde A, \tilde B)$ onto the subset of pairs satisfying the following almost commutation conditions:
\begin{eqnarray} \label{commutation}
\partial_x^i \pi_1(\tilde A \circ \tilde B(x,0) - \tilde B \circ \tilde A(x,0))\arrowvert_{x=0}&=&0, \quad i=0,2 \\
\label{normalization} \pi_1 \tilde B(0,0)&=&1.
\end{eqnarray}
To that end we set 
$$\Pi_2 (\tilde A,\tilde B)(x,y)=\left( \left( \tilde \eta_1(x) +a x^4+b x^6 + \tilde \tau_1(x,y) \atop   \tilde \eta_2(x) +a x^4+b x^6+ \tilde \tau_2(x,y)  \right), \left( \tilde \xi_1(x)+c  + \tilde \pi_1(x,y) \atop   \tilde \xi_2(x) +c +\tilde \pi_2(x,y)  \right)   \right),$$
and require that $(\ref{commutation})$ and $(\ref{normalization})$ are satisfied for maps in the pair $\Pi_2 (\tilde A,\tilde B)(x,y)$. The following Proposition is proved below.
\begin{prop}\label{prop:2Dprojection}
There exists $\rho>0$ such that for all $\tl\Sigma$ in the $\rho$-neighborhood of $$\iota(\cC(\lambda_n(\hat Z_1),\lambda_n(\hat W_1)))$$
there is a unique tuple $(a,b,c,d)$ such that the pair $ \Pi_2 (\tilde A,\tilde B)$ satisfies the equations $(\ref{commutation})$ and $(\ref{normalization})$. Moreover, in this neighborhood, the dependence of $\Pi_2$ on $\Sigma$ is analytic.  
Furthermore, if $A\circ B=B\circ A$, then 
$\Pi_2=\text{Id}.$

Finally, if $$\Sigma=\left( \zeta \atop \zeta \right) \equiv \left(\left(\eta \atop \eta  \right), \left(\xi \atop \xi  \right)  \right) \in \iota(\cC(Z,W)), $$ then 
$$\Pi_2 (\tl A,\tl B)=\left(\cP \hspace{0.5mm} p \cR^n\zeta \atop \cP \hspace{0.5mm} p \cR^n \zeta  \right),$$
where $\cP$ has been defined in (\ref{Pproj}). 
\end{prop}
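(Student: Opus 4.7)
The approach is a direct two-dimensional adaptation of the proof of Proposition~\ref{prop-proj2}. The unknowns are the three complex scalars $a$, $b$, $c$ (the $d$ in the statement appears to be a typographical slip, as the definition of $\Pi_2$ introduces only the three constants $a$, $b$, $c$), and there are exactly three scalar equations: the two commutator conditions at $i=0,2$ together with the normalization. First I would solve the normalization explicitly by setting
\begin{equation*}
c=1-\tilde\xi_1(0)-\tilde\pi_1(0,0),
\end{equation*}
which depends analytically on $\tilde\Sigma$. Substituting this value reduces the problem to a nonlinear $2\times 2$ system in $(a,b)$ coming from the two almost-commutation conditions.

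Next I would analyze that $2\times 2$ system via the implicit function theorem. Because both commutation conditions involve only $\pi_1$ of the composition evaluated at $(x,y)=(0,0)$ and a single $x$-derivative there, the $y$-dependent tails $\tilde\tau_j,\tilde\pi_j$ enter only through their values and $x$-derivatives at $(0,0)$, which are $O(\delta^2)$ by construction. The linearization of the two equations in $(a,b)$ therefore differs from the $2\times 2$ matrix appearing in \eqref{eq:newFs2} (with $(\phi,\psi)$ replaced by $(\tilde\eta_1,\tilde\xi_1)$) by an $O(\delta^2)$ perturbation. By the determinant computation at the end of the proof of Proposition~\ref{prop-proj2}, the unperturbed Jacobian equals $(1+c)^{9}\tilde\xi_1'(0)$; for $\tilde\Sigma$ close to $\iota(\cC(\lambda_n(\hat Z_1),\lambda_n(\hat W_1)))$ this is close to $(\eta_*\circ\xi_*\circ\zeta_*^{\bar m_n})'(0)$, which is non-zero by conformality of the one-dimensional fixed point at the relevant preimage of the origin. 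Choosing $\rho>0$ small enough, the perturbed Jacobian remains invertible and the implicit function theorem provides a unique analytic solution $(a,b)$.

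For the last two assertions I would argue as follows. If $A\circ B=B\circ A$, then the pre-renormalization $p\cR^n\Sigma$ is also commuting (as a composition of iterates of a commuting pair), and the critical-projection conjugations $T_1,T_2$ preserve commutativity; combined with the already-guaranteed normalization $\pi_1\tilde B(0,0)=1$, the triple $(a,b,c)=(0,0,0)$ satisfies all three equations, and by the uniqueness just established $\Pi_2=\mathrm{Id}$. For $\Sigma=\iota(\zeta)$, both components of $\tilde A$ and of $\tilde B$ coincide and the $y$-tails $\tilde\tau_j,\tilde\pi_j$ vanish identically, so the system reduces term-for-term to the one-dimensional system \eqref{eq:newFs1a}--\eqref{eq:newFs2} whose unique solution defines $\cP\,p\cR^n\zeta$; hence $\Pi_2$ applied to an embedded pair equals $\iota(\cP\,p\cR^n\zeta)$.

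The main technical obstacle is the uniform non-degeneracy of the Jacobian on the full neighborhood, which reduces to a lower bound on $|\tilde\xi_1'(0)|$. This bound follows from the hypothesis $|\partial_x g(x,0)|>0$ outside $\bar Q$ that is built into the definition of $\cB(\cU,Q,\delta)$, together with the chain-rule expansion of $\tilde\xi_1$ as a composition of iterates whose derivatives at the relevant intermediate points are controlled by the closeness of $\pi_1\Sigma,\pi_2\Sigma$ to $\zeta_*$; shrinking $\cU$, $Q$, and $\delta$ if necessary makes this bound quantitative and uniform.
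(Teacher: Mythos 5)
Your overall strategy (solve for $c$ from the normalization, then run the implicit function theorem on the remaining $2\times 2$ system in $(a,b)$, treating the $y$-tails as $O(\delta^2)$ perturbations) is the same as the paper's, which writes the full $3\times3$ system $\bF(a,b,c)=0$ and checks its Jacobian at $(0,0,0)$. But your non-degeneracy argument is based on the wrong quantity, and as written it fails. You import the determinant $(1+c)^{9}\psi'(0)$ from the \emph{symmetric factor} projection (\ref{eq:newFs2}), where the corrections $az^{2}+bz^{3}$ are added to the factor $\phi$ and $\psi'(0)$ is the derivative of the factor $\psi$, which is genuinely non-zero (it equals $\tfrac12\xi''(0)$). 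Here, as in Proposition \ref{prop-proj2}, the corrections $ax^{4}+bx^{6}$ are added to the map itself, and the correct unperturbed Jacobian of the two commutation equations in $(a,b)$ is governed by the matrix with rows $(1,1)$ and $(12\,\tilde\xi_1'(0)^2+4\,\tilde\xi_1''(0),\,30\,\tilde\xi_1'(0)^2+6\,\tilde\xi_1''(0))$, whose determinant is $\rho$-close to $2\tilde\xi_1''(0)$. The non-degeneracy therefore rests on the \emph{second} derivative $\tilde\xi_1''(0)\approx\xi_*''(0)>0$, not on $\tilde\xi_1'(0)$.

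This matters because your claimed lower bound on $|\tilde\xi_1'(0)|$ is not just unproved, it is false: the spaces $\cC(Z,W)$ consist of pairs with $\eta'(0)=\xi'(0)=0$, pre-renormalizations preserve this (so $(\eta_*\circ\xi_*\circ\zeta_*^{\bar m_n})'(0)=0$, contrary to your ``non-zero by conformality'' assertion — that composition is a rescaled copy of $\xi_*$), and the whole purpose of the critical projection $\Pi_1$ is to force $\partial_x(\pi_1\tilde B)(0,0)=O(\delta^2)$, as the paper computes just before the proposition. So the quantity you propose to bound from below (via $|\partial_x g(x,0)|>0$ outside $\bar Q$, which in any case does not control the point $0\in Q$) is deliberately made nearly zero by the construction, and the Jacobian you wrote down would be nearly singular. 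Replacing it by the $2\tilde\xi_1''(0)$ computation repairs the argument and is exactly what the paper does. A lesser point: in your treatment of the commuting case you invoke an ``already-guaranteed'' normalization $\pi_1\tilde B(0,0)=1$; this is not guaranteed by $\Pi_1 p\cR^n$ and would need justification (the paper's proof does not address the last two claims of the proposition at all, so this part is not a comparison failure, but your justification as stated is incomplete).
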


\begin{proof}
Consider the following system of $3$ equations $\bF(a,b,c)=0$:
\begin{eqnarray}
\nonumber a&+&b-c-\\
\nonumber &-&\left(\tilde \eta _1(\tilde \xi_1(0))-\tilde \eta_1(\tilde \xi_1(0)+c)+\tilde \tau_1(\tilde \xi_1(0), \tilde \xi_2(0))-\tilde \tau_1(\tilde \xi_1(0)+c, \tilde \xi_2(0)+c) \right)\\
\nonumber &=& \pi_1( \tilde B \circ \tilde A(0,0)- \tilde A \circ \tilde B(0,0))\\
\nonumber \tilde \xi_1'(0)^2 (12 a &+&30 b)+\tilde \xi_1''(0) (4 a +6 b)+\\
\nonumber &+&\tilde \eta_k''(\tilde \xi_1(0)+c) \tilde \xi_1'(0)^2+\tilde \eta_k'(\tilde \xi_1(0)+c) \tilde \xi_1''(0)-\tilde \eta_1''(\tilde \xi_1(0)) \tilde \xi_1'(0)^2-\tilde \eta_1'(\tilde \xi_1(0)) \tilde \xi_1''(0)-\\
\nonumber&+&\sum_{i,j=1,2}\partial_{i,j}\tilde \tau_1(\tilde \xi_1(0)+c,\tilde \xi_2(0)+c) \tilde \xi_i'(0) \tilde \xi_j'(0)+\\
\nonumber &+& \nabla \tilde \tau_1(\tilde \xi_1(0)+c,\tilde \xi_2(0)+c) \cdot (\tilde \xi_1''(0), \tilde \xi_2''(0))-\\
\nonumber&-&\sum_{i,j=1,2}\partial_{i,j}\tilde \tau_1(\tilde \xi_1(0),\tilde \xi_2(0)) \tilde (\xi_i'(0)) (\tilde \xi_j'(0))+\\
\nonumber &-& \nabla \tilde \tau_1(\tilde \xi_1(0),\tilde \xi_2(0)) \cdot (\tilde \xi_1''(0), \tilde \xi_2''(0))\\
\nonumber & =& \pi_1( \tilde B \circ \tilde A(x,0)- \tilde A \circ \tilde B(x,0))''\arrowvert_{x=0} \\
\nonumber c&=&1-\tilde \xi_1(0).
\end{eqnarray}
Notice, that when the  equations $(\ref{commutation})$ and $(\ref{normalization})$ are satisfied this system of equations is solved by $a=b=c=0$. Furthermore, denote $\mathbf{p}=(a,b,c)$, then the derivative $D_{\mathbf p} \bF(\mathbf 0)$ is given by
$$
\left[
\begin{array}{c c c }
\vspace{4mm}
1 & 1 & -1+\tilde n _1'(\tilde \xi_1(0)) +\rho_1 \\
\vspace{4mm}
12 \vareps_1^2 + 4 \tilde \xi_1''(0) & 30 \vareps_1^2 +6 \tilde \xi_1''(0) & {\tilde \eta_1'''( \tilde \xi_1(0) ) \tilde \xi_1'(0)+  \tilde \eta_1''( \tilde \xi_1(0) ) \tilde \xi_1''(0)\atop +\sum_{i,j=1,2}\rho_5^{i,j} \vareps_i \vareps_j +\rho_6}\\ 
\vspace{4mm}
0 & 0  & 1
\end{array}
\right],
$$
where $\vareps_i=\tilde \eta_i'(0)$, $i=1,2$, and the following are the functions with the $O(\rho^2$) norm:
\begin{eqnarray}
\nonumber  \rho_1&=&\partial_1 \tilde \tau_1(\tilde \xi_1(0), \tilde \xi_2(0))+\partial_2 \tilde \tau_1(\tilde \xi_1(0), \tilde \xi_2(0)),\\
\nonumber \rho_5^{i,j}&=&\partial_{1,i,j}\tilde \tau_1(\tilde \xi_1(0),\tilde \xi_2(0))+\partial_{2,i,j}\tilde \tau_1(\tilde \xi_1(0),\tilde \xi_2(0))\\
\nonumber \rho_6&=&\nabla \partial_1 \tilde \tau_1(\tilde \xi_1(0),\tilde \xi_2(0)) \cdot (\tilde \xi_1''(0), \tilde \xi_2''(0))+\nabla \partial_2 \tilde \tau_1(\tilde \xi_1(0),\tilde \xi_2(0)) \cdot (\tilde \xi_1''(0), \tilde \xi_2''(0))
\end{eqnarray}

If $(A,B)$ is in a $\rho$-neighborhood of  $\iota(\cC(\lambda_n(\hat Z_1),\lambda_n(\hat W_1)))$, then the determinant of the above matrix is $\rho$-close to $2 \tilde \xi_1''(0)$ and is nonzero. By the Regular Value Theorem, there exists $\rho>0$ and for each such $(A,B)$, a unique triple $(a,b,c)$ such that  the  equations $(\ref{commutation})$ and $(\ref{normalization})$ are satisfied.
\end{proof}

\noindent
Let us fix $n\in 2\NN$, $\cU$, $Q$, $\delta$ so that Lemma~\ref{lem-preren} and Lemma~\ref{lem-proj1} hold, and furthermore, the image 
$\Pi_1p\cR^n\cB(\cU,Q,\delta)$ lies in the $\rho$-neighborhood of $\iota(\cC(\lambda_n(\hat Z_1),\lambda_n(\hat W_1)))$ as in Proposition~\ref{prop:2Dprojection}. We then have:

\begin{prop}
\label{small-pert2}
For every $\Sigma \in \cB(\cU,Q,\delta)$,
$${\rm dist}(\Pi_2\Pi_1 p \cR^n \Sigma, \iota(\cB(\lambda_n(\hat Z_1),\lambda_n(\hat W_1)))) < C \delta \left( \|\pi_1 \Sigma -\pi_2 \Sigma \|  +\delta \right).$$
\end{prop}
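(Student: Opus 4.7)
The plan is to combine the proximity estimate for $\Pi_1 p\cR^n\Sigma$ from Lemma~\ref{lem-proj1} with two facts about $\Pi_2$: its local Lipschitz continuity inherited from analyticity (Proposition~\ref{prop:2Dprojection}), and the invariance $\Pi_2(\iota(\cC(\lambda_n(\hat Z_1),\lambda_n(\hat W_1))))\subset\iota(\cB(\lambda_n(\hat Z_1),\lambda_n(\hat W_1)))$.

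First, choose $\zeta_0\in\cC(\lambda_n(\hat Z_1),\lambda_n(\hat W_1))$ realizing (up to a factor of $2$) the distance estimated in Lemma~\ref{lem-proj1}:
$$\|\Pi_1 p\cR^n\Sigma-\iota(\zeta_0)\|\le 2C_1\,\delta\bigl(\|\pi_1\Sigma-\pi_2\Sigma\|+\delta\bigr).$$
After shrinking $\cU$, $Q$, and $\delta$ so that the right-hand side is dominated by the constant $\rho$ from Proposition~\ref{prop:2Dprojection}, both $\Pi_1 p\cR^n\Sigma$ and $\iota(\zeta_0)$ lie in the open set on which $\Pi_2$ is defined and analytic.

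Next I verify $\Pi_2\iota(\zeta_0)\in\iota(\cB(\lambda_n(\hat Z_1),\lambda_n(\hat W_1)))$. Writing $\zeta_0=(\tilde\eta,\tilde\xi)$, the embedded pair $\iota(\zeta_0)$ has identical first and second coordinates and no $y$-dependence, so the error terms $\tilde\tau_i,\tilde\pi_i$ appearing in the formula for $\Pi_2$ vanish identically. The two-dimensional conditions (\ref{commutation})--(\ref{normalization}) for the triple $(a,b,c)$ then collapse to the normalization $\tilde\xi(0)+c=1$ together with $[\tilde\eta+ax^4+bx^6,\tilde\xi+c](0)=0$ and $[\tilde\eta+ax^4+bx^6,\tilde\xi+c]''(0)=0$, i.e.\ precisely the equations solved by the one-dimensional projection $\cP$ of Proposition~\ref{prop-proj2}. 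Hence $\Pi_2\iota(\zeta_0)=\iota(\cP\zeta_0)$, and $\cP\zeta_0\in\cB(\lambda_n(\hat Z_1),\lambda_n(\hat W_1))$ by definition of $\cP$.

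Finally, the analytic dependence of $(a,b,c)$ on the pair asserted in Proposition~\ref{prop:2Dprojection} yields, via Cauchy's estimate, a uniform Lipschitz bound $L$ for $\Pi_2$ on a slightly smaller subneighborhood. Combining,
$${\rm dist}\bigl(\Pi_2\Pi_1 p\cR^n\Sigma,\,\iota(\cB(\lambda_n(\hat Z_1),\lambda_n(\hat W_1)))\bigr)\le\|\Pi_2\Pi_1 p\cR^n\Sigma-\Pi_2\iota(\zeta_0)\|\le L\,\|\Pi_1 p\cR^n\Sigma-\iota(\zeta_0)\|,$$
and the initial estimate yields the claim with $C=2LC_1$. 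The principal obstacle lies in verifying the reduction $\Pi_2(\iota(\cC))\subset\iota(\cB)$ cleanly, which amounts to showing that the 2D commutation equations collapse exactly to their 1D counterparts on embedded pairs; once this is in place, the remainder is a routine concatenation of Lemma~\ref{lem-proj1} with the analytic dependence statement in Proposition~\ref{prop:2Dprojection}.
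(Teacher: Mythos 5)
Your argument is correct and follows essentially the same route as the paper, whose proof is a one-line observation that the proximity estimate of Lemma~\ref{lem-preren}/Lemma~\ref{lem-proj1} combines with analyticity of $\Pi_2\Pi_1$ and the fact that it acts as the identity (equivalently, as the one-dimensional projection $\cP$) on embedded one-dimensional pairs. Your write-up merely makes explicit the Lipschitz bound and the collapse of the 2D commutation conditions on $\iota$-embedded pairs, which is exactly the content the paper leaves implicit.
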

\begin{proof}
This follows immediately from Lemma~\ref{lem-preren} and the fact that $\Pi_2\Pi_1$ is an analytic operator, which is equal to the identity on $\iota(\cB(\lambda_n(\hat Z_1),\lambda_n(\hat W_1)))$.
\end{proof}

 Let 
$\ell_n=\pi_1\bar B(0,0)$ and  $L_n(x,y)=(\ell_n x, \ell_n y)$. 
\begin{defn}
 We define {\it the $n$-th renormalization } of a pair $\Sigma \in \cB(\cU,Q,\delta)$ as
\begin{equation}\label{2Drenorm}
\cRG^n \Sigma = L_n^{-1} \circ\Pi_2\circ\Pi_1\circ p\cR^n \Sigma \circ L_n.
\end{equation}
\end{defn}

\subsection{Hyperbolicity of renormalization of 2D dissipative maps}
Let $n$ be an even number as above. 
We conclude this section by formulating the following theorem:
\begin{thm}
\label{mainthm6}
The point $\iota(\zeta_*)$ is a fixed point of $\cRG^n$ in $O(\Omega,\Gamma)$. The linear operator
$N=D \cRG^n \arrowvert_{\iota(\zeta_*)}$ is compact. The spectrum of $N$ coincides with the spectrum of $M^n$, where $M$ is as in Main Theorem 3. More
specifically, $\lambda\neq 0$ is an eigenvalue of $M^n$, and $\bar v$ is a corresponding eigenvector if and only if $\lambda$ is an eigenvalue of $N$, and $D\iota(\bar v)$ is a corresponding eigenvector.

\end{thm}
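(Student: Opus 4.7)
The proof splits into three steps, one for each assertion.

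For the fixed-point claim, the plan is to exploit the commutativity of $\zeta_*=(\eta_*,\xi_*)$ supplied by Proposition~\ref{fixed-pt-commute}. The 2D pair $\iota(\zeta_*)=((\eta_*,\eta_*),(\xi_*,\xi_*))$ then satisfies $A\circ B=B\circ A$, so the last bullet of Lemma~\ref{lem-proj1} makes $\Pi_1\circ p\cR^n$ reduce to conjugation by a translation $T_1$ on $\iota(\zeta_*)$; since the critical point of $\zeta_*$ sits at the origin, $T_1=\mathrm{Id}$. The last statement of Proposition~\ref{prop:2Dprojection} identifies $\Pi_2$ on $\iota(\cC(Z_1,W_1))$ with $\iota\circ\cP$, and the rescaling $L_n$ in (\ref{2Drenorm}) mirrors the 1D scaling factor. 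Chaining these identifications yields
$$\cRG^n\iota(\zeta_*)=\iota(\cRG^n\zeta_*)=\iota(\zeta_*),$$
where the last equality is \thmref{mainthm1}.

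For compactness, I would invoke a two-variable analogue of Lemma~\ref{lem-cpt1}. By construction $\cRG^n$ sends the neighborhood $\cB(\cU,Q,\delta)\subset O(\Omega,\Gamma)$ into $O(\hat\Omega,\hat\Gamma)$ where $\hat\Omega\Supset\Omega$ and $\hat\Gamma\Supset\Gamma$. Montel's theorem for holomorphic functions in two variables then makes the inclusion $O(\hat\Omega,\hat\Gamma)\hookrightarrow O(\Omega,\Gamma)$ compact, so $N=D\cRG^n|_{\iota(\zeta_*)}$ is compact.

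The spectral identification is the heart of the argument, and the main obstacle is to show that every eigenvector of $N$ with non-zero eigenvalue is tangent to the embedded 1D space $\iota(\cC(Z_1,W_1))$. The plan is to fix any topological complement
$$T_{\iota(\zeta_*)}O(\Omega,\Gamma)=D\iota(T_{\zeta_*}\cC(Z_1,W_1))\oplus X,$$
pick an arbitrary tangent vector $v$, and apply Proposition~\ref{small-pert2} to $\Sigma_t=\iota(\zeta_*)+tv$ with $\delta$ of order $t$. In both the case $v\in D\iota(T_{\zeta_*}\cC)$, where $\pi_1\Sigma_t-\pi_2\Sigma_t\equiv 0$, and the general case, where $\|\pi_1\Sigma_t-\pi_2\Sigma_t\|=O(t)$, the bound coming from Proposition~\ref{small-pert2} is $O(t^2)$. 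Dividing by $t$ and letting $t\to 0$ forces the $X$-component of $Nv$ to vanish, so $\mathrm{Image}(N)\subset D\iota(T_{\zeta_*}\cC)$ and every eigenvector with non-zero eigenvalue must lie in this subspace. On $D\iota(T_{\zeta_*}\cC)$, the compatibility statements in Lemma~\ref{lem-proj1} and Proposition~\ref{prop:2Dprojection} make $N$ conjugate via $D\iota$ to $D\cRG^n|_{\zeta_*}$; and since $n$ is even, the anti-linear factor $c$ in $M=D\cRG|_{\zeta_*}\circ c$ cancels upon taking the $n$-th power, so that $M^n=D\cRG^n|_{\zeta_*}$ as a $\mathbb{C}$-linear operator. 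This yields the claimed bijection between nonzero eigenvalues of $N$ and of $M^n$, with eigenvectors related by $D\iota$.
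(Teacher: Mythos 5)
Your argument is correct and follows essentially the same route as the paper: the paper's (very terse) proof consists precisely of the two ingredients you use, namely the quadratic estimate of Proposition~\ref{small-pert2} (which the paper states as ${\rm dist}((\cRG^n)^2\Sigma,\iota(\cB(\cdot,\cdot)))<C\delta^2$, while you apply the same proposition once to $\Sigma_t=\iota(\zeta_*)+tv$, where both $\delta$ and $\|\pi_1\Sigma_t-\pi_2\Sigma_t\|$ are $O(t)$) to kill all eigendirections transverse to the embedded slice, together with the intertwining relation $\iota\circ\cRG^n=\cRG^n\circ\iota$ coming from the last bullets of Lemma~\ref{lem-proj1} and Proposition~\ref{prop:2Dprojection} to identify the restriction of $N$ to $D\iota(T_{\zeta_*}\cC)$ with the $n$-th power of the one-dimensional differential. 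Your fixed-point and compactness steps likewise match what the paper leaves implicit, so this is a faithful (and more detailed) version of the paper's own proof.
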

\begin{proof}
Notice, that according to  Proposition~\ref{small-pert2}
$${\rm dist}(\left( \cRG^n\right)^2 \Sigma, \iota(\cB(l_n^{-2} \lambda_n^2(\hat Z_1),l_n^{-2} \lambda_n^2(\hat W_1)))) < C \delta^2.$$
The claim now follows follows since
$$\iota\circ \cRG^n=\cRG^n\circ\iota. $$
\end{proof}

\section{A computer-assisted proof of renormalization hyperbolicity}
\label{sec:pfmain1}

\subsection{Contraction Mapping Principle}

Our proof of the existence of a fixed point $\zeta^*=(\eta_*,\xi_*)$ is a computer implementation of the Contraction Mapping Principle for the Newton map for the operator $\cRG$ in the space  $\cA_1(U,V)$.

Let $\cF$ be an operator analytic in some neighborhood $\cN$ in a Banach space $\cZ$, which has a fixed point $\zeta_*$ in this space,
 and whose differential $D|_{\zeta_*} \cF$ is a hyperbolic linear map in $\cN$.  Suppose, that one knows an approximation
of the fixed point  $\zeta_0 \approx \zeta_*$.
 Set
$$M \equiv \left[ \field{I}-L_0 \right]^{-1},$$
where $L_0$ is an approximation of $D \cF[\zeta_0]$. For all $\zeta$, such that $\zeta_0+M \zeta \in \cN$, set
$$N_{\cF,L_0}[\zeta]=\zeta+\cF[\zeta_0+M \zeta]-(\zeta_0+M \zeta).$$

Notice, that if $\hat\zeta$ is a fixed point of $N_{\cF,L_0}$, then $\zeta^*=\zeta_0+M \hat\zeta$ is a fixed point of $\cF$.

The operator $\field{I}-L_0$ is indeed invertible if $L_0$ is close to $D \cF[\zeta_0]$, since $D \cF$ is hyperbolic at $\zeta_0$. If $\zeta_0$ is a reasonably good approximation of the true fixed point of $\cF$, then the operator $N_{\cF,L_0}$ is expected to be a strong contraction in a neighborhood of $0$:
\begin{eqnarray}
\nonumber D N_{\cF,L_0}[\zeta] &=& \field{I} +D \cF[\zeta_0+M \zeta] \cdot M -M= \left[ M^{-1} +D \cF[\zeta_0+M \zeta]  -\field{I} \right] \cdot M \\
\nonumber &=& \left[\field{I}-L_0  +D \cF [\zeta_0+M \zeta]  -\field{I} \right] \cdot M = \left[ D \cF [\zeta_0+M \zeta] - L_0 \right] \cdot M.
\end{eqnarray}

The last expression is typically small in a small neighborhood of $0$, if the norm of $M$ is not too large (if $M$ is large, one might have to find a better approximation $\zeta_0$ and take a smaller neighborhood of $0$).  The following well-known version of the contraction mapping theorem specifies a sufficient condition for the existence of a fixed point $\hat\zeta$:

\begin{cmp}
Let $\cN$ be an open subset of a Banach space $\cZ$.
Suppose that  $N_{\cF,L_0}$ is a well-defined  analytic operator from  $\cN$ to $\cZ$. Let $\zeta_0\in \cN$ and $B_\delta(\zeta_0) \subset \cN$ be such that
$$\| D N_{\cF,L_0}[\zeta]\| \le  \cD <1, $$
for any $\zeta \in  B_\delta(\zeta_0)$, and
$$\|N_{\cF,L_0}[\zeta_0]-\zeta_0\|\le \epsilon.$$

If $\epsilon < (1-\cD) \delta$ then the operator $N_{\cF,L_0}$ has a fixed point $\hat\zeta$ in $B_\delta(\zeta_0)$, such that
$$\| \hat\zeta-\zeta_0\| \le {\epsilon \over 1-\cD }.$$
\end{cmp}

In practice, for a computer-assisted proof, the linear map $L_0$ is always chosen as a finite-dimensional approximation of $D \cF[\zeta_0]$.

\subsection{Differential of the renormalization operator}

We will now compute the action of the differential $D \cRG(\phi,\psi)$ on a vector $(u,v) \in T_{(\phi,\psi)} \cA_1(U,V)$. First, 
let $a$, $b$, and $c$ be the coefficients of the projection $\cP$ considered as functions of the pair $(\phi,\psi)$. Then the differential
\begin{equation}
D \cP(\phi,\psi) (u,v)=(u,v)+( Da (u,v) z^2+Db (u,v) z^3, Dc (u,v)). 
\end{equation}
The differential $D c (u,v)$ can be found easily from the equation $(\ref{eq:newFs1})$:
$$D c(u,v)=-v(0),$$
while   $D a (u,v)$ and  $D b (u,v)$ are found by differentiating the system $(\ref{eq:newFs2})$:
\begin{eqnarray} \label{eq:newFs3}
A \cdot \left[
\begin{array}{c}
a \\
b
\end{array}
\right]+B \cdot \left[
\begin{array}{c}
D a (u,v) \\
D b (u,v)
\end{array}
\right]=\left[
\begin{array}{c}
c_1 \\
c_2
\end{array}
\right],
\end{eqnarray}
where $$A=\left[ a_{11} \  a_{12}  \atop a_{21}  \  a_{22} \right]$$ with
\begin{eqnarray}
\nonumber a_{11}&=&4 \hat{\psi}(0)^{3}(v(0)+ Dc (u,v))=0, \\
\nonumber a_{12}&=&6 \hat{\psi}(0)^{5}(v(0)+ Dc (u,v))=0, \\
\nonumber a_{21}&=&6 \hat{\psi}(0)^{2}  \hat{\psi}'(0) (v(0)+ Dc (u,v))+ 2 \hat{\psi}(0)^{3} v'(0)=2 \hat{\psi}(0)^{3} v'(0), \\ 
\nonumber a_{22}&=&15 \hat{\psi}(0)^{4} \hat{\psi}'(0)(v(0)+ Dc (u,v))+3 \hat{\psi}(0)^{5} v'(0)= 3 \hat{\psi}(0)^{5} v'(0) \Rightarrow \\
\nonumber A &=&\left[
\begin{array}{c c}
0 \quad  & \quad 0   \\
2 \hat{\psi}(0)^{3} v'(0) \quad  & \quad 3 \hat{\psi}(0)^{5} v'(0)
\end{array}
\right], \\
\nonumber  B&=&  \left[
\begin{array}{c c}
\hat{\psi}(0)^{4} \quad  & \quad \hat{\psi}(0)^{6}   \\
 2 \hat{\psi}(0)^{3}  \hat{\psi}'(0) \quad  & \quad  3 \hat{\psi}(0)^{5} \hat{\psi}'(0)
\end{array}
\right],\\
\nonumber c_1&=&(v(\phi(0)^2)+ Dc (u,v))+2 \hat{\psi}'(\phi(0)^2) \phi(0) u(0) \\
\nonumber &\phantom{=}& \phantom{(v(\phi(0)^2)}- u(\hat{\psi}(0)^2)-2 \phi'(\hat{\psi}(0)^2)\hat{\psi}(0) (v(0)+ Dc (u,v)) \\
 \nonumber &=&(v(\phi(0)^2)+ Dc (u,v))+2 \hat{\psi}'(\phi(0)^2) \phi(0) u(0) - u(\hat{\psi}(0)^2)
\end{eqnarray}
\begin{eqnarray} \label{eq:matrices}
\nonumber c_2&=&v'(\phi(0)^2) \phi(0) \phi'(0)+2 \hat{\psi}''(\phi(0)^2) \phi(0)^2 \phi'(0) u(0)+\hat{\psi}'(\phi(0)^2) u(0) \phi'(0)+\\
\nonumber &\phantom{=}& \phantom{v'(\phi(0)^2) \phi(0) \phi'(0)}+ \hat{\psi}'(\phi(0)^2) \phi(0) u'(0)-u'(\hat{\psi}(0)^2) \hat{\psi}(0) \hat{\psi}'(0) -\\
\nonumber &\phantom{=}& \phantom{v'(\phi(0)^2) \phi(0) \phi'(0)}-2 \phi''(\hat{\psi}(0)^2) \hat{\psi}(0)^2 \hat{\psi}'(0) (v(0)+ Dc (u,v)) \\
\nonumber &\phantom{=}&\phantom{v'(\phi(0)^2) \phi(0) \phi'(0)}-\phi'(\hat{\psi}(0)^2) (v(0)+ Dc (u,v)) \hat{\psi}'(0)-\phi'(\hat{\psi}(0)^2) \hat{\psi}(0) v'(0) \\
\nonumber &=&v'(\phi(0)^2) \phi(0) \phi'(0)+2 \hat{\psi}''(\phi(0)^2) \phi(0)^2 \phi'(0) u(0)+\hat{\psi}'(\phi(0)^2) u(0) \phi'(0)+ \\
\nonumber &\phantom{=}& \phantom{v'(\phi(0)^2) \phi(0) \phi'(0)} +  \hat{\psi}'(\phi(0)^2) \phi(0) u'(0)-u'(\hat{\psi}(0)^2) \hat{\psi}(0) \hat{\psi}'(0)- \\
\nonumber &\phantom{=}& \phantom{v'(\phi(0)^2) \phi(0) \phi'(0)} - \phi'(\hat{\psi}(0)^2) \hat{\psi}(0) v'(0) \\
\nonumber &=&\left(v'(\phi(0)^2) +2 \hat{\psi}''(\phi(0)^2) \phi(0) u(0) \right)\phi(0) \phi'(0)  + \\
\nonumber  &\phantom{=}&\phantom{\left(v'(\phi(0)^2) \right.}   + \hat{\psi}'(\phi(0)^2) \left(u(0) \phi'(0)+\phi(0) u'(0) \right)-\\
\nonumber  &\phantom{=}&\phantom{\left(v'(\phi(0)^2) \right.}  - u'(\hat{\psi}(0)^2) \hat{\psi}(0) \hat{\psi}'(0)-\phi'(\hat{\psi}(0)^2) \hat{\psi}(0) v'(0).
\end{eqnarray}
The action of the differential of the operator $\cRG$ on a vector $(u,v)$, therefore, is given by
$$D \cRG (\phi,\psi) (u,v)=(\tilde{u},\tilde{v})+( Da (\tilde{u},\tilde{v}) z^2+Db (\tilde{u},\tilde{v}) z^3, -\tilde{v}(0)),$$
where  $Da (\tilde{u},\tilde{v})$ and  $Db (\tilde{u},\tilde{v})$ are the solutions of the system $(\ref{eq:newFs3})$ for the functions  $\tilde{u}$ and $\tilde{v}$ given by
\begin{eqnarray}
\nonumber \tilde{u}&:=&\Pi_\phi D \cR(\phi,\psi) (u,v)\\
\nonumber &=& c \circ \left\{\left( \left(\lambda^{-1} \circ \phi \circ q_2 \right)' \circ \psi \circ \lambda^2 \right) \cdot v \circ  \lambda^2 + \lambda^{-1} \circ u \circ q_2 \circ \psi \circ \lambda^2 -\right.\\
\nonumber  &\phantom{=}& \phantom{c \circ \left.\left( \left(\lambda^{-1} \circ \phi \circ q_2 \right)' \circ \psi \circ \lambda^2 \right) \cdot v \circ  \lambda^2  \ \right.} -  {v(0) \over \lambda^2} \circ \phi \circ q_2 \circ \psi \circ \lambda^2  + \\
\nonumber  &\phantom{=}& \phantom{c \circ \left.\left( \left(\lambda^{-1} \circ \phi \circ q_2 \right)' \circ \psi \circ \lambda^2 \right) \cdot v \circ  \lambda^2 \  \right. }  \left. + 2 \left(\left( \phi \circ q_2 \circ \psi \right)' \circ \lambda^2  \right) \cdot D \la v \right\} \circ c, \\
\nonumber \tilde{v}&:=&\Pi_\psi   D \cR(\phi,\psi) (u,v)= c \circ \left\{ \lambda^{-1} \circ u \circ \lambda^2+{v(0) \over \lambda^2} \circ \phi \circ \lambda^2 +\left( \phi' \circ \lambda^2\right) \cdot D \la v \right\}  \circ c,
\end{eqnarray}
here $\Pi_{\phi/\psi}$ denote the projections on the corresponding components of the pair, and  $D \la v(z):=v(0) z$.

We remark, that the action $D \cR(\phi,\psi)(u,v)$ is of the form $c \circ L (u,v) \circ c$, where $L$ is a linear map and $c$ is the complex conjugation. Therefore, the differential $ D \cR(\phi,\psi)$ is an {{\it anti-linear}} map.

\subsection{Existence of the renormalization fixed point}

A direct application of the Contraction Mapping Principle, in the form stated above, with the operator $\cRG$ acting on $\cA_1(U,V)$ is impossible: the map $\I - L_0$, being neither linear nor anti-linear on $T_{(\phi,\psi)}\cA_1(U,V)$, does not admit a matrix representation in $GL(N,\C)$ (here, $L_0$ is assumed to be an approximation of the action of $D \cRG(\phi,\psi)$ on some $N$-dimensional subspace of  $T_{(\phi,\psi)}\cA_1(U,V)$).

One way to circumvent this problem is to consider the linear operator $D \cRG \circ c$, where $c$ is complex conjugation. We, however, take a different approach, and think of $\cA_1(U,V)$ as a {\it real} Banach space, and expand its elements into
{\it real-valued} vectors over the basis
\begin{eqnarray}
\nonumber \cL:&=&\left\{ \{ (l_\phi^k,0) \}_{k=0}^\infty,\{(0,l_\psi^k)\}_{k=0}^\infty,\{(i l_\phi^k,0)\}_{k=0}^\infty,\{(0,i l_\psi^k)\}_{k=0}^\infty \right\}\\
\nonumber &=&:\left\{ \{ u_k \}_{k=0}^\infty,\{v_k\}_{k=0}^\infty,\{\upsilon_k\}_{k=0}^\infty,\{w_k\}_{k=0}^\infty \right\},
\end{eqnarray}
where 
$$l_\phi(z)={z-c_\phi \over r_\phi}, \quad l_\psi(z)={z-c_\psi \over r_\psi}.$$

This real Banach space will be referred to as  $\tilde{\cA}_1(U,V)$.

Of course, the space  $\tilde{\cA}_1(U,V)$ has a linear complex structure defined on it, specifically, 
$$J(u_k)= \upsilon_k, \  J(\upsilon_k)= -u_k, \ J(v_k)= w_k, \  J(w_k)= -\upsilon_k,$$
which, however, will play no role in our computation of the differential.

Notice, that by definition, balls $B_r((\phi,\psi))$ in both spaces   $\tilde{\cA}_1(U,V)$ and  $\cA_1(U,V)$ coincide.

In this setting, an approximation $L$  of the action of the  differential $D \cRG(\phi,\psi)$ on an $N$-dimensional subspace of  $T_{(\phi,\psi)} \tilde{\cA}_1(U,V)$ admits a representation in $GL(N,\R)$.

To construct an approximation of the differential of renormalization, we first find an  approximate renormalization fixed point by renormalizing several times the pair $(\id,p_\theta)$, where $p_\theta$ is the conjugate $ l_\psi \circ P_\theta \circ l_\psi^{-1} $ of the quadratic polynomial $P_\theta=e^{2 \pi i \theta} z  -0.5 e^{2 \pi i \theta} z^2$, $\theta=(\sqrt{5}-1)/2$, by the linear transformation $l_\psi$.  The resultant pair is referred to as $(\phi_a,\psi_a)$.

A linear approximation $L_a$ of the action of the renormalization differential  $D \cRG(\phi_a,\psi_a)$ is constructed on the span
$${\rm span} \left\{ \{ (l_\phi^k,0) \}_{k=0}^{N_1},\{(0,l_\psi^k)\}_{k=0}^{N_2},\{(i l_\phi^k,0)\}_{k=0}^{N_1},\{(0,i l_\psi^k)\}_{k=0}^{N_2} \right\},$$
where 
$$N_1=120, \quad N_2=160,$$
and is of the form 
$$L_a=\left[ \begin{array}{c c c c}
M_{1 1} & M_{1 2} & M_{1 3} & M_{1 4} \\
M_{2 1} & M_{2 2} & M_{2 3} & M_{2 4} \\
M_{3 1} & M_{3 2} & M_{3 3} & M_{3 4} \\
M_{4 1} & M_{4 2} & M_{4 3} & M_{4 4}
\end{array}
\right],
$$
where the matrices in the first column above are the following projections of the action of the differential:
\begin{itemize}
\item[$\bullet$] $M_{1 1}$ is the $(N_1+1) \times (N_1+1)$ matrix of columns $ \left[ \Re P_{N_1} \Pi_\phi  D \cRG(\phi_a,\psi_a) u_k \right]$,
\item[$\bullet$] $M_{3 1}$ is the $(N_1+1) \times (N_1+1)$ matrix of columns $ \left[ \Im P_{N_1} \Pi_\phi  D \cRG(\phi_a,\psi_a) u_k \right]$,
\item[$\bullet$] $M_{2 1}$ is the $(N_1+1) \times (N_2+1)$ matrix of columns
$ \left[ \Re P_{N_2} \Pi_\psi  D \cRG(\phi_a,\psi_a) u_k \right]$,
\item[$\bullet$] $M_{4 1}$ is the $(N_1+1) \times (N_2+1)$ matrix of columns
$ \left[ \Im P_{N_2} \Pi_\psi  D \cRG(\phi_a,\psi_a) u_k \right]$,
\end{itemize}
the matrices in the third column above are  the following projections of the action of the differential:
\begin{itemize}
\item[$\bullet$] $M_{1 3}$ is the $(N_1+1) \times (N_1+1)$ matrix of columns $ \left[ \Re P_{N_1} \Pi_\phi  D \cRG(\phi_a,\psi_a) \upsilon_k \right]$,
\item[$\bullet$] $M_{3 3}$ is the $(N_1+1) \times (N_1+1)$ matrix of columns $ \left[ \Im P_{N_1} \Pi_\phi  D \cRG(\phi_a,\psi_a) \upsilon_k \right]$,
\item[$\bullet$] $M_{2 3}$ is the $(N_1+1) \times (N_2+1)$ matrix of columns
$ \left[ \Re P_{N_2} \Pi_\psi  D \cRG(\phi_a,\psi_a) \upsilon_k \right]$,
\item[$\bullet$] $M_{4 3}$ is the $(N_1+1) \times (N_2+1)$ matrix of columns
$ \left[ \Im P_{N_2} \Pi_\psi  D \cRG(\phi_a,\psi_a) \upsilon_k \right]$,
\end{itemize}
the matrices in the second column above are  the following projections of the action of the differential:
\begin{itemize}
\item[$\bullet$] $M_{1 2}$ is the $(N_2+1) \times (N_1+1)$ matrix of columns $ \left[ \Re P_{N_1} \Pi_\phi  D \cRG(\phi_a,\psi_a) v_k \right]$,
\item[$\bullet$] $M_{3 2}$ is the $(N_2+1) \times (N_1+1)$ matrix of columns $ \left[ \Im P_{N_1} \Pi_\phi  D \cRG(\phi_a,\psi_a) v_k \right]$,
\item[$\bullet$] $M_{2 2}$ is the $(N_2+1) \times (N_2+1)$ matrix of columns
$ \left[ \Re P_{N_2} \Pi_\psi  D \cRG(\phi_a,\psi_a) v_k \right]$,
\item[$\bullet$] $M_{4 2}$ is the $(N_2+1) \times (N_2+1)$ matrix of columns
$ \left[ \Im P_{N_2} \Pi_\psi  D \cRG(\phi_a,\psi_a) v_k \right]$,
\end{itemize}
the matrices in the forth column above are  the following projections of the action of the differential:
\begin{itemize}
\item[$\bullet$] $M_{1 4}$ is the $(N_2+1) \times (N_1+1)$ matrix of columns $ \left[ \Re P_{N_1} \Pi_\phi  D \cRG(\phi_a,\psi_a) w_k \right]$,
\item[$\bullet$] $M_{3 4}$ is the $(N_2+1) \times (N_1+1)$ matrix of columns $ \left[ \Im P_{N_1} \Pi_\phi  D \cRG(\phi_a,\psi_a) w_k \right]$,
\item[$\bullet$] $M_{2 4}$ is the $(N_2+1) \times (N_2+1)$ matrix of columns
$ \left[ \Re P_{N_2} \Pi_\psi  D \cRG(\phi_a,\psi_a) w_k \right]$,
\item[$\bullet$] $M_{4 4}$ is the $(N_2+1) \times (N_2+1)$ matrix of columns
$ \left[ \Im P_{N_2} \Pi_\psi  D \cRG(\phi_a,\psi_a) w_k \right]$.
\end{itemize}

The Newton map $N_{\cRG,L_a}$ is now iterated until a good approximation $(\phi_0,\psi_0)$ of the renormalization fixed point is found. After that, a $2(N_1+1)+2(N_2+1)$ by $2(N_1+1)+2(N_2+1)$ matrix $L_0$ approximating the action of the differential $D \cRG(\phi_0,\psi_0)$ on $T_{(\phi_0,\psi_0)} \tilde{\cA}_1(U,V)$, is constructed.

This linear map $L_0$ is used in the following Theorem:

\begin{thm}\label{mainthmA} $\phantom{aaa}$\\
There exists a pair of polynomials $(\phi_0,\psi_0) \in \tilde{\cA}_1(U,V)$, where  $U=\D_{r_\phi}(c_\phi)$ and   $V=\D_{r_\psi}(c_\psi)$  are as in the Main Theorem 1, such that the operator $\cRG$ is analytic and compact in $B_r((\phi_0,\psi_0))$ in $\tilde{\cA}_1(U,V)$, $r=6.6610992 \cdot 10^{-11}$.

Furthermore, the Newton map $N_{\cRG,L_0}$ admits the following bounds:
\begin{itemize}
\item[$i)$] $\|(\phi_0,\psi_0)-N_{\cRG,L_0}(\phi_0,\psi_0)  \|_1 < \eps=1.33221983054516078 \cdot 10^{-11}$;
\item[$ii)$] $\| D N_{\cRG,L_0}(\phi,\psi)\| < \cD=0.287850876998644962$ for all $(\phi,\psi) \in B_\delta(  (\phi_0,\psi_0)  )$, $\delta=2.90241954016095172 10^{-11}$.
\end{itemize}
Therefore,
$$\epsilon < (1-\cD) \delta < -7.34735699562457612 \cdot 10^{-12}<0,$$
and the operator $N_{\cRG,L_0}$ has a fixed point in $B_\delta( (\phi_0,\psi_0) ) \subset \tilde{\cA}_1(U,V)$, while $\cRG$ has a fixed point  $(\phi_*,\psi_*)$ in  $B_r((\phi_0,\psi_0))$ in $\cA_1(U,V)$.
\end{thm}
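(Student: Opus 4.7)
The plan is to verify the hypotheses of the Contraction Mapping Principle for the Newton map $N_{\cRG,L_0}$ by a rigorous computer-assisted computation. All estimates will be performed in ball (interval) arithmetic on Taylor expansions at the centers $c_\phi$ and $c_\psi$, truncated at degrees $N_1$ and $N_2$ respectively. Viewing $\tilde{\cA}_1(U,V)$ as a real Banach space turns the anti-linear differential of $\cRG$ into an $\RR$-linear operator, whose finite-rank truncation $L_0$ has a matrix representation of size $N=2(N_1+1)+2(N_2+1)$ over $\RR$.

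First I would construct the polynomial approximation $(\phi_0,\psi_0)$ by running the numerical Newton iteration $N_{\cRG,L_a}$ starting from $(\mathrm{id}, p_{\theta_*})$ until the floating-point residual $\|\cRG(\phi_0,\psi_0)-(\phi_0,\psi_0)\|_1$ is well below the target $\epsilon$, and then assemble $L_0$ column-by-column using the explicit formulas for $D\cR$ and $D\cP$ derived above, verifying numerically that $\I-L_0$ is invertible with $\|M\|=\|(\I-L_0)^{-1}\|$ of reasonable size. For bound $(i)$ I would rigorously enclose $\cRG(\phi_0,\psi_0)=\cP\circ\cR(\phi_0,\psi_0)$: each factor in \eqref{eq:renfactors} is a composition of analytic functions whose Taylor expansion can be computed to any fixed order with a verified tail remainder, using \propref{norm-equiv}(2) together with Cauchy estimates on slightly enlarged disks where each intermediate composition is analytic. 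The required domain inclusions \eqref{eq:inclusion1}--\eqref{eq:inclusion3} are certified from interval enclosures of $\lambda=\xi(0)$, and the projection $\cP$ is evaluated by solving the $3\times 3$ system \eqref{eq:newFs1}--\eqref{eq:newFs2} whose Jacobian \eqref{eq:newJac} is rigorously bounded away from zero. This yields an $\cL_1$ enclosure of the residual below $\epsilon=1.33\ldots\cdot 10^{-11}$.

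The main obstacle is bound $(ii)$, a uniform estimate of $\|DN_{\cRG,L_0}(\zeta)\|=\|(D\cRG((\phi_0,\psi_0)+M\zeta)-L_0)\circ M\|$ over $\zeta$ in the $\delta$-ball, because $L_0$ is only finite-rank while $D\cRG$ acts on an infinite-dimensional Taylor tail. I would split the tangent space as $E_N\oplus E_N^\perp$, where $E_N$ is the span of the $N$ basis elements that $L_0$ encodes and $E_N^\perp$ is the tail. On $E_N$ the estimate reduces to a rigorous matrix norm bound on the block $P_N(D\cRG((\phi_0,\psi_0)+M\zeta)-L_0)P_N M$, obtained by computing $P_N D\cRG(\phi_0,\psi_0) P_N - L_0$ in interval arithmetic and adding a Lipschitz correction for the perturbation $\zeta\in B_\delta(0)$ controlled by a Cauchy-type second-derivative bound on $\cRG$. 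On $E_N^\perp$ the key point is that the image of $\cRG$ and of each $D\cRG(\zeta)h$ extends analytically to the strictly larger disks $\tU$, $\tV$ from \thmref{mainthm1}(iii); Cauchy estimates then give geometric decay of Taylor coefficients by factors $(r_\phi/\tr_\phi)^k$ and $(r_\psi/\tr_\psi)^k$ for $k>N_1$, $k>N_2$, which simultaneously yields the claimed compactness and real-analyticity of $\cRG$ on $B_r((\phi_0,\psi_0))$ and forces the tail contribution to $\cD$ to be arbitrarily small at the chosen truncation orders.

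Combined with the verified $\|M\|$, this delivers $\cD=0.2878\ldots<1$, and the computer certifies the strict inequality $\epsilon<(1-\cD)\delta$; the Contraction Mapping Principle then produces the fixed point $(\phi_*,\psi_*)\in B_r((\phi_0,\psi_0))$. The delicate part throughout is engineering the interval arithmetic tightly enough that the certified values of $\epsilon$, $\cD$, $\delta$ satisfy the strict inequality with only a $\sim 7.3\cdot 10^{-12}$ margin; this forces the choice $N_1=120$, $N_2=160$ (needed for the tail bound on $E_N^\perp$) and demands careful control of wrapping effects in the composition bounds for $\cR$.
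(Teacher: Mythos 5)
Your plan follows essentially the same route as the paper: the Contraction Mapping Principle for the Newton map $N_{\cRG,L_0}$, the passage to the real Banach space $\tilde{\cA}_1(U,V)$ to handle the anti-linearity of $D\cRG$, construction of $(\phi_0,\psi_0)$ and $L_0$ by Newton iteration from $(\id,p_{\theta_*})$ with $N_1=120$, $N_2=160$, rigorous interval enclosures for the residual and derivative bounds $(i)$--$(ii)$, and compactness/analyticity via the extension of the image to the larger disks $\tU,\tV$ as in Proposition~\ref{prop:renorm_compactness}. The finite-dimensional-plus-tail splitting with Cauchy estimates that you spell out is the standard implementation of exactly the verification the paper delegates to its ADA code, so the proposal is correct and not genuinely different.
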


\noindent
The compactness of $\cRG$ and $D|_{\zeta_*}\cRG$ follow from:

\begin{prop}  \label{prop:renorm_compactness}
There exists $r'<r$ such that every  $(\phi, \psi) \in  B_{r'}((\phi_*,\psi_*))$  is in $\tilde{\cA}_1(\tU,\tV)$, where $\tU \supset \fD_{\tr_\phi} (\tc_\phi)$ and  $\tV \supset \fD_{\tr_\psi}(\tc_\psi)$, with $\tr_\phi=0.937$, $\tr_\psi=0.874$,  $\tc_\phi=0.6+i 0.09$ and $\tc_\psi=c_\psi$. 
\end{prop}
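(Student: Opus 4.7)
This proposition is the final computer-assisted ingredient that secures the compactness of both $\cRG$ and $D\cRG|_{\zeta_*}$ stated in parts (iv)--(v) of Main Theorem 1: the point is that the enlarged space $\tilde{\cA}_1(\tU,\tV)$ embeds compactly into $\tilde{\cA}_1(U,V)$ by Montel's theorem, and so it suffices to exhibit a neighborhood of $\zeta_*$ lying inside the enlarged space. The mechanism making this possible is that the renormalization formula $\cR(\phi,\psi)=(c\circ\lambda^{-1}\circ\phi\circ q_2\circ\psi\circ\lambda^2\circ c,\ c\circ\lambda^{-1}\circ\phi\circ\lambda^2\circ c)$ involves pre-composition with $\lambda^2\circ c$ where $|\lambda_*|^2\approx 0.55<1$, so $\cR$ produces outputs analytic on domains inflated by the factor $|\lambda|^{-1}$.

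Step one is to extend the fixed point itself. Starting from the identity $\cRG(\phi_*,\psi_*)=(\phi_*,\psi_*)$, which gives
\[
\phi_*(z)=c\circ\lambda_*^{-1}\bigl(\phi_*\circ q_2\circ\psi_*\circ\lambda_*^2\circ c\bigr)(z)+a_* z^4+b_* z^6,
\]
and its analogue for $\psi_*$, I would use rigorous interval arithmetic based on the polynomial approximation $(\phi_0,\psi_0)$ from Theorem A together with the $\|\cdot\|_1$-enclosure of $(\phi_*,\psi_*)$ in $B_r((\phi_0,\psi_0))$, to verify that the right-hand side above is analytic on the shifted inflated disk $\fD_{\tr_\phi}(\tc_\phi)$. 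Concretely: check the containments $\lambda_*^2\circ c(\fD_{\tr_\phi}(\tc_\phi))\subset V$ and $q_2\circ\psi_*\circ\lambda_*^2\circ c(\fD_{\tr_\phi}(\tc_\phi))\subset U$ using interval enclosures of $\lambda_*,\psi_*,\phi_*$, and the analogous containments for the second component over $\fD_{\tr_\psi}(\tc_\psi)$. This recovers part (iii) of Main Theorem 1 and at the same time provides an explicit numerical upper bound for the weighted-$l_1$ norm of $(\phi_*,\psi_*)$ at the shifted centers $\tc_\phi,\tc_\psi$ with weights $\tr_\phi^j,\tr_\psi^j$.

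Step two is to propagate the extension to perturbations. For $(\phi,\psi)\in B_{r'}((\phi_*,\psi_*))$ the same compositional formula, with $(\phi,\psi)$ in place of $(\phi_*,\psi_*)$ and $\lambda=\psi(0)$ in place of $\lambda_*$, furnishes the required analytic continuation to $\fD_{\tr_\phi}(\tc_\phi)$ and $\fD_{\tr_\psi}(\tc_\psi)$, provided the domain inclusions of step one survive the perturbation. By continuity of the composition and the estimate $|\lambda-\lambda_*|,\|\phi-\phi_*\|,\|\psi-\psi_*\|=O(r')$ these containments remain valid once $r'$ is taken sufficiently small, and the $\|\cdot\|_1$-norm at $(\tc_\phi,\tc_\psi)$ of the extended pair is bounded by the step-one bound for $(\phi_*,\psi_*)$ plus a $C\cdot r'$ term; the finite-dimensional projection correction $\cP$ (polynomial of degrees $4,6,0$) contributes only a uniformly bounded perturbation by Proposition~\ref{prop-proj}.

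\textbf{Main obstacle.} The delicate part is the interval-arithmetic bookkeeping: one must simultaneously track the two different expansion centers $c_\phi,c_\psi$ and $\tc_\phi,\tc_\psi$, the two radii $r_\phi,r_\psi$ and $\tr_\phi,\tr_\psi$, and propagate tight enclosures through the nested compositions $\phi\circ q_2\circ\psi\circ\lambda^2\circ c$ on the enlarged disks. Producing an explicit numerical value of $r'$ small enough to validate both the domain-inclusion check and the norm bound --- while being compatible with the $r=6.66\cdot 10^{-11}$ from Theorem A --- is the essential computer-assisted content, and proceeds by the same machinery that underlies the proof of Theorem A but now run with centers and radii shifted to $(\tc_\phi,\tr_\phi),(\tc_\psi,\tr_\psi)$.
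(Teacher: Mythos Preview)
The paper gives no written proof of this proposition; it is stated immediately after Theorem~A as a computer-verified fact, with no accompanying argument. So there is no detailed paper proof to compare against.

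That said, your step two contains a genuine slip. The fixed-point equation
\[
\phi_*(z)=c\circ\lambda_*^{-1}\bigl(\phi_*\circ q_2\circ\psi_*\circ\lambda_*^2\circ c\bigr)(z)+a_* z^2+b_* z^3
\]
lets you bootstrap the domain of $\phi_*$ precisely because the \emph{same} function appears on both sides. When you replace $(\phi_*,\psi_*)$ by a nearby non-fixed pair $(\phi,\psi)$, the right-hand side of your ``same compositional formula'' is the first component of $\cR(\phi,\psi)$, which is \emph{not} equal to $\phi$. So the formula does not furnish an analytic continuation of $\phi$ and $\psi$ to the enlarged disks; it furnishes an analytic continuation of $\cRG(\phi,\psi)$. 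A generic element of a ball in $\tilde\cA_1(U,V)$ has no reason to extend to $\tU\supsetneq U$.

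What your argument actually proves --- and what is needed for the sentence ``The compactness of $\cRG$ and $D|_{\zeta_*}\cRG$ follow from'' to make sense --- is that $\cRG\bigl(B_{r'}((\phi_*,\psi_*))\bigr)\subset\tilde\cA_1(\tU,\tV)$: compactness comes from factoring $\cRG$ through the compact inclusion $\tilde\cA_1(\tU,\tV)\hookrightarrow\tilde\cA_1(U,V)$. Read this way, the proposition (which is ambiguously phrased in the paper) is exactly part~(iii) of Main Theorem~1 together with its stability under perturbation. With that correction your outline is the right one and matches the implicit computer-assisted verification: check, via interval arithmetic uniformly over the ball, the containments $\lambda^2\circ c(\fD_{\tr_\phi}(\tc_\phi))\subset V$, $q_2\circ\psi\circ\lambda^2\circ c(\fD_{\tr_\phi}(\tc_\phi))\subset U$, and $\lambda^2\circ c(\fD_{\tr_\psi}(\tc_\psi))\subset U$, then bound the $\|\cdot\|_1$-norm of the output at the shifted centers.
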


\subsection{Existence of an unstable subspace}
Let $\lambda_*$ be as in the Main Theorem 1. 
We introduce an analytic operator
\begin{equation}
\cR_*(\eta,\xi)=\left(c \circ \lambda_*^{-1} \circ \eta \circ \xi \circ \lambda_* \circ c, c \circ \lambda_*^{-1} \circ \eta  \circ \lambda_*   \circ c \right),
\end{equation}
whose difference from $\cR$ is that the scaling factor $\lambda_*$ is fixed, rather than dependent on the pair $(\eta,\xi)$.
Of course, $\zeta_*$ is a fixed point of $\cR_*$. 

The following is easy to show:
\begin{lemma}
\label{difference-spectrum}
The spectra of the linear operators $D\cR_*^2|_{\zeta_*}$ and $D\cR^2|_{\zeta_*}$ satisfy
$$\text{sp}(D\cR_*^2|_{\zeta_*})=\text{sp}(D\cR^2|_{\zeta_*})\cup\{ 1\}.$$
\end{lemma}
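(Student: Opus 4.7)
The strategy is to identify the single direction $h_0$ in $T_{\zeta_*}$ responsible for the eigenvalue $1$ of $D\cR_*^2|_{\zeta_*}$ and to show it is precisely the direction collapsed by $\cR$ due to its automatic choice of scaling $\lambda(\zeta)=\eta(0)$. For $\mu$ near $1$ in $\CC$ consider the rescaling family $\zeta_\mu(z):=\mu^{-1}\zeta_*(\mu z)$, and set
$$
h_0(z):=\frac{d}{d\mu}\bigg|_{\mu=1}\zeta_\mu(z)=-\zeta_*(z)+z\,\zeta_*'(z).
$$
An unwinding of the definitions parallel to the one in the proof of \propref{fixed-pt-commute}, using the fixed-point equation $\cR\zeta_*=\zeta_*$ and the antilinearity introduced by $c$, yields the two identities
$$
\cR_*\zeta_\mu=\zeta_{\bar\mu},\qquad \cR\zeta_\mu=\zeta_*.
$$
The first reflects that the scaling in $\cR_*$ is frozen at $\lambda_*$ while $c$ reverses $\mu\leftrightarrow\bar\mu$; the second reflects that $\lambda(\zeta_\mu)=\mu^{-1}\lambda_*$, so the factors of $\mu^{-1}$ in $\cR$ cancel the rescaling. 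Differentiating at $\mu=1$ yields $D\cR_*|_{\zeta_*}h_0=h_0$ and $D\cR|_{\zeta_*}h_0=0$; squaring gives $D\cR_*^2|_{\zeta_*}h_0=h_0$ and $D\cR^2|_{\zeta_*}h_0=0$.

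Next, decompose $T_{\zeta_*}=\CC h_0\oplus\cK$, where $\cK:=\{h:h_\xi(0)=0\}$ is the tangent to the normalization hyperplane $\{\xi(0)=1\}$; this is a genuine direct sum because $(h_0)_\xi(0)=-1$ and $\cK$ is the kernel of a bounded functional. Since $\cR$ has image in $\{\xi(0)=1\}$, $D\cR|_{\zeta_*}$ has image in $\cK$; combined with $D\cR|_{\zeta_*}h_0=0$, this gives the block form
$$
D\cR|_{\zeta_*}\;\longleftrightarrow\;\begin{pmatrix}0 & 0\\ 0 & L_\cK\end{pmatrix},\qquad L_\cK:=D\cR|_{\zeta_*}\big|_\cK,
$$
so that $D\cR^2|_{\zeta_*}$ is block diagonal with entries $0$ and $L_\cK^2$. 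The identity $\cR\zeta=M_{\overline{\lambda(\zeta)/\lambda_*}}(\cR_*\zeta)$, where $M_\nu f(z):=\nu^{-1}f(\nu z)$, shows that $D\cR-D\cR_*$ is a rank-one operator with range in $\CC h_0$. Together with $D\cR_*|_{\zeta_*}h_0=h_0$ this produces the block upper-triangular form
$$
D\cR_*|_{\zeta_*}\;\longleftrightarrow\;\begin{pmatrix}1 & \ast\\ 0 & L_\cK\end{pmatrix},\qquad D\cR_*^2|_{\zeta_*}\;\longleftrightarrow\;\begin{pmatrix}1 & \ast\\ 0 & L_\cK^2\end{pmatrix}.
$$

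Reading the spectra off the diagonal blocks of these block-triangular operators yields
$$
\text{sp}(D\cR^2|_{\zeta_*})=\{0\}\cup\text{sp}(L_\cK^2),\qquad \text{sp}(D\cR_*^2|_{\zeta_*})=\{1\}\cup\text{sp}(L_\cK^2).
$$
By \thmref{mainthm1}, $D\cR|_{\zeta_*}$ is compact on an infinite-dimensional Banach space, so $0\in\text{sp}(L_\cK^2)$ automatically, the first spectrum reduces to $\text{sp}(L_\cK^2)$, and the lemma follows. The only real calculation is the pair of identities $\cR_*\zeta_\mu=\zeta_{\bar\mu}$ and $\cR\zeta_\mu=\zeta_*$; once these are in hand, the spectral comparison is a formal consequence of the block-triangular structure, and the residual obstacle is essentially bookkeeping: checking that $\cK$ is a closed complement of $\CC h_0$ on which the block formalism literally makes sense, which is immediate from the boundedness of the functional $h\mapsto h_\xi(0)$.
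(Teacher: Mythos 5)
Your proof is correct and rests on exactly the idea the paper leaves implicit (the lemma is stated as ``easy to show''; the underlying computation with coordinate-change generators, in which the rescaling generator $\sigma_1(z)=z$ produces the vector $h_0=-\zeta_*+z\,\zeta_*'$ as a neutral direction of $D\cR_*$ and a null direction of $D\cR$, is precisely the intended argument), and your block-triangular bookkeeping correctly turns that into the spectral identity for the squares. Two harmless remarks: the paper's Definition of renormalizability literally sets $\lambda(z)=\xi(0)z$, but your reading $\lambda(\zeta)=\eta(0)$ is the intended one (it is forced by $\lambda_*=\eta_*(0)$ in Main Theorem 1 and by $\cR\zeta_*=\zeta_*$), and in your block form for $D\cR_*$ itself the upper-left entry is complex conjugation on $\CC h_0$ rather than the identity, since $D\cR_*$ is anti-linear --- squaring produces the identity block you actually use, so the conclusion is unaffected.
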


Trivially, the map $\lambda_* \circ c$ where $\lambda_*=\lambda(\zeta^*)$ admits two invariant lines $l_+$ and $l_-$, which are perpendicular to each other.
 Specifically,
from  the invariance condition  $\lambda_* \circ c (x+a i x)=t (x+a i x)$ we have
\begin{eqnarray}
\nonumber \lambda_* \circ c (x+a i x)&=&\left( \Re(\lambda_*) x+ a \Im(\lambda_*) x \right)+i \left( \Im(\lambda_*) x- a \Re(\lambda_*) x \right)=t (x +a i x) \\
\nonumber &\implies& a \Re(\lambda_*) + a^2 \Im(\lambda_*)= \Im(\lambda_*) - a \Re(\lambda_*) \\
\nonumber &\implies&  a^2 \Im(\lambda_*) +2 a \Re(\lambda_*) - \Im(\lambda_*)=0 \\
\label{eq:apm} &\implies& a_\pm={ -\Re(\lambda_*) \pm |\lambda_*| \over \Im(\lambda_*)}.  
\end{eqnarray}

\begin{figure}[t]
\centering
\vspace{2mm}       
\includegraphics[height=7.0cm,width=6cm,angle=-90]{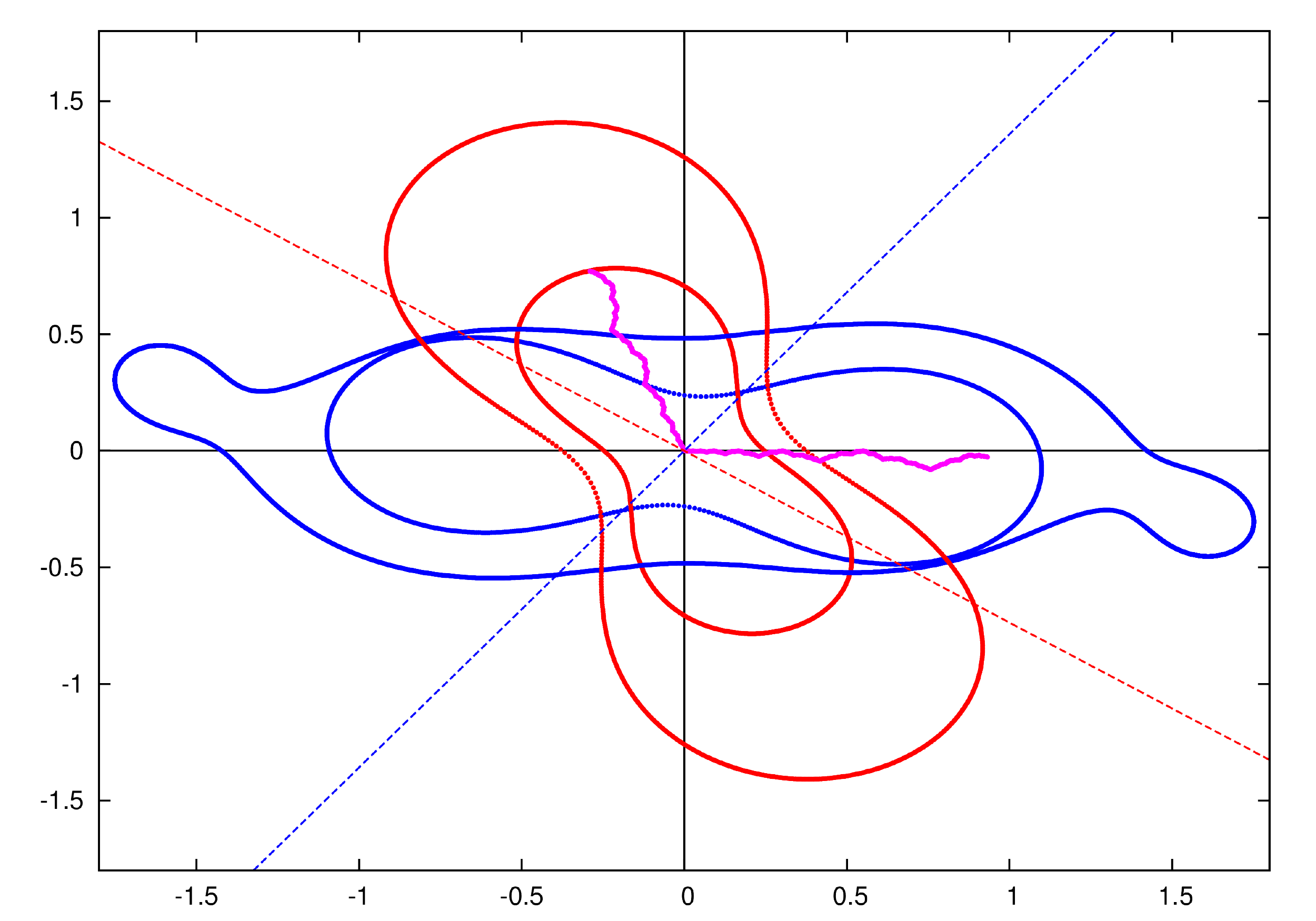}
\caption{Invariant lines: $l_+$ in blue and $l_-$ in red.}
\label{fig:lines} 
\end{figure}

Let $u(z)=\phi_u(z^2)$ and $v(z)=\psi_v(z^2)$ be analytic vector fields, such that $(\phi_u,\psi_v)$ is in $T_{(\phi_*,\psi_*)} \cA_1(U,V)$. 
Consider the action of the differential $D \cR_*(\zeta_*)(u,v)$
on a pair of such  fields   $(u_0,v_0)$, contained in some cone fields: $u_0(z) \in \cC_{u_0}(z)$ and $v_0(x) \in \cC_{v_0}(z)$:
\begin{equation}\label{eq:diraction}
(u_1,v_1):=(D \cR_*(\zeta_*)(u_0,v_0)= c \circ \lambda_*^{-1} \circ \left(u_0 \circ \xi_* + (\eta'_* \circ \xi_* ) \cdot v_0, u_0   \right) \circ \lambda_* \circ c.
\end{equation}

Clearly, $v_1 \in C_{v_0}(z)$, whenever $u_0(z) \in \cC_{u_0}(z)$,  if we set 
\begin{eqnarray} \label{eq:Cbetaw}
C_{v_0}(z):=c\left(\lambda_*^{-1}\left(C_{u_0}(\lambda_*(c(z)))\right)\right).
\end{eqnarray}
To find a pair of invariant cone fields, it is, therefore, sufficient to construct a cone field $C_{u_0}(z)$ invariant under the operator
\begin{equation}\label{eq:uoperator}
\cT(u):=c \circ \lambda_*^{-1} \circ \left(u \circ \xi_* \circ \lambda_* \circ c  +  (\eta'_* \circ \xi_* \circ \lambda_* \circ c ) \cdot (c \circ \lambda_*^{-1} \circ  u \circ |\lambda_*|^2) \right),
\end{equation}
and define $C_{v_0}(z)$ as in $(\ref{eq:Cbetaw})$. Notice that
\begin{eqnarray}
\label{eq:uoperator0} \cT(u)(0)&=&c \circ \lambda_*^{-1} \circ \left(u(\xi_*(0))  +  \eta'_*(\xi_*(0)) \cdot (c \circ \lambda_*^{-1} \circ  u(0) \right), \\
\label{eq:uoperator2n}\cT(u)(|\lambda|^{2n}) &=&c \circ \lambda_*^{-1} \circ(u(\xi_*(\lambda_*(c (|\lambda|^{2n}))))  +  \\
\nonumber &\phantom{=}& \phantom{c \circ \lambda_*^{-1}} + \eta'_*(\xi_*(\lambda_*(c(|\lambda|^{2n}) ))) \cdot (c \circ \lambda_*^{-1} \circ  u (|\lambda|^{2(n+1)}))),\\
\label{eq:uoperatorxi0} \cT(u)\circ \xi_*(0)&=&c \circ \lambda_*^{-1} \circ \left(u(|\lambda_*|^2) + \eta'_*(|\lambda_*|^2) \cdot (c \circ \lambda_*^{-1} \circ  u(|\lambda_*|^2) \right).
\end{eqnarray}

We will now proceed to construct a cone field $C_{u}$ which is invariant under the operator $\cT$ at points $|\lambda|^{2n}$, $n=0, \ldots, \infty$, and $0$. To that end, as formulas $(\ref{eq:uoperator0})-(\ref{eq:uoperatorxi0})$ suggest,  we will require bounds on derivatives of $\eta_*'$ at a sequence of points, as well as bounds on the images of the set $\lambda_*(c(|\lambda_*|^{2n}(Z)))$ under $\xi_*$.

\begin{lemma}\label{lem:conebounds}
The following bounds hold for the fixed point pair $\zeta_*$.
\begin{itemize}
\item[$1)$] $\eta_*'(\xi_*(0))=|\lambda_*|^{-2}$,
\item[$2)$] $|\arg(\eta_*'(|\lambda_*|^2)| <1.062$
\item[$3)$] $\sum_{n=1}^\infty |\arg(\eta_*'(\xi_*(\lambda_*(c(|\lambda_*|^{2n} )))))|<0.075=:\epsilon$,
\item[$4)$]  $| \arg(u(\xi_*(\lambda_*(|\lambda_*|^2))))-\arg(u(\xi_*(0)))   |<1.066$ and \\
$| \arg(u(\xi_*(\lambda_*(\bar{z}))))-\arg(u(\xi_*(0)))   |<0.398=\varepsilon$ for all $z \in [0,|\lambda_*|^4]$.
\end{itemize}
\end{lemma}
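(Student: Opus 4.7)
\medskip

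The plan is to separate statement (1), which is an exact algebraic identity, from statements (2)--(4), which are numerical bounds to be verified by rigorous interval arithmetic on the enclosure of $(\phi_*,\psi_*)$ produced in Theorem~\ref{mainthmA}.

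For statement (1), I would first record that $\lambda_* = \eta_*(0)$: evaluating the fixed-point equation $\xi_* = c\circ\lambda_*^{-1}\circ\eta_*\circ\lambda_*\circ c$ at $0$ together with the normalization $\xi_*(0) = 1$ forces $\overline{\lambda_*^{-1}\eta_*(0)} = 1$. Differentiating this same equation twice at $0$ and using that $c$-conjugation commutes with derivatives up to a bar yields $\xi_*''(0) = \overline{\lambda_*\,\eta_*''(0)}$. Then I would differentiate the companion fixed-point equation $\eta_*(z) = \overline{\lambda_*^{-1}\eta_*(\xi_*(\lambda_*\bar z))}$ twice at $z = 0$; because $\xi_*'(0) = 0$, only the term with $\xi_*''(0)$ survives, producing
\[
\eta_*''(0) = \overline{\lambda_*\,\eta_*'(\xi_*(0))\,\xi_*''(0)}.
\]
Substituting the previous relation and invoking $\eta_*''(0) > 0$ (hence real) collapses the expression to $\eta_*'(\xi_*(0)) = |\lambda_*|^{-2}$ exactly.

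The common framework for (2)--(4) is the following. By Theorem~\ref{mainthmA} the pair $(\phi_*,\psi_*)$ lies in an explicit ball $B_r((\phi_0,\psi_0))$ with $r \approx 6.66 \cdot 10^{-11}$ in the weighted-$\ell_1$ norm, and by Proposition~\ref{prop:renorm_compactness} it extends univalently to the strictly larger disks $\tilde U,\tilde V$. Together these supply rigorous interval enclosures for $\eta_*$, $\xi_*$ and all their derivatives at any prescribed point of $Z$, $W$ (of width $O(r)$), as well as $L^\infty$-type Cauchy bounds needed when evaluation is on a small arc rather than a single point. Statement (2) is then a one-shot enclosure: evaluate $\lambda_* = \eta_*(0)$, then $|\lambda_*|^2\in\RR_{>0}$, then $\eta_*'(|\lambda_*|^2)$, and check that the resulting argument interval sits strictly inside $(-1.062,1.062)$. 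Statement (4) is similar but ranged: its first inequality is again a pointwise enclosure, while its second requires enclosing the image $\xi_*(\lambda_*\cdot[0,|\lambda_*|^4])$ in a small disk around $1$ and then bounding the oscillation of $\arg u$ on that disk by $\|(\log u)'\|_\infty\cdot\mathrm{diam}$, again via interval arithmetic using the Cauchy bounds from Proposition~\ref{prop:renorm_compactness}.

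For statement (3) I would truncate the sum at some $N$, enclose the $N$ leading terms directly, and estimate the tail using statement (1). Namely, since $\arg(\eta_*'(1)) = 0$ by (1) and $\xi_*'(0) = 0$, a Taylor expansion gives
\[
\xi_*(\lambda_*|\lambda_*|^{2n}) = 1 + O(|\lambda_*|^{4n+2}),
\]
with an explicit constant from the interval enclosure of $\xi_*''$, so combining with a Lipschitz-type estimate for $\arg$ near a positive real value yields $|\arg(\eta_*'(\xi_*(\lambda_*|\lambda_*|^{2n})))| \le C\,|\lambda_*|^{4n+2}$, which bounds the tail by a geometric series of ratio $|\lambda_*|^4$. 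The sum of the finite enclosure plus the rigorous tail is then checked to fall below $0.075$. The main obstacle throughout will be the standard dependency blow-up in interval arithmetic when compositions such as $\eta_*'\circ\xi_*\circ\lambda_*$ are evaluated: the enclosure must be tight enough that the already-thin margins in (2)--(4) survive, which in practice dictates careful choices of the polynomial truncation level, of $N$ in (3), and of the disk in which the image $\xi_*(\lambda_*\cdot[0,|\lambda_*|^4])$ is captured for (4); these are routine but delicate in the same computer-assisted framework developed in Section~\ref{sec:pfmain1}.
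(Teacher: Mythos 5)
Your part (1) is correct and is essentially the paper's identity obtained by a slightly different route: the paper differentiates the fixed-point equation once and uses the factorization $\eta_*'(z)=2z\,\phi_*'(z^2)$ to get $\overline{\eta_*'(\xi_*(\lambda_*\bar z))}=|\lambda_*|^{-2}\,\phi_*'(z^2)/\phi_*'(|\lambda_*|^4z^2)$ and then sets $z=0$, whereas you extract the same relation from second derivatives at $0$; both are fine. Part (2) is indeed verified in the paper ``directly on the computer,'' as you propose. For part (3) your scheme (rigorous enclosure of finitely many terms plus a Taylor/Cauchy tail estimate of order $|\lambda_*|^{4n}$, anchored at $\eta_*'(1)=|\lambda_*|^{-2}>0$ and $\xi_*'(0)=0$) is workable, since (3) only involves the computable functions $\eta_*,\xi_*,\lambda_*$; the paper instead feeds the identity above into the classical rotation bound $|\arg f'(z)|<4\arcsin|z|$ for normalized univalent $f$, which produces the explicit bound (\ref{eq:univbound1}) and an analytically controlled remainder after ten computed terms. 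These are genuinely different but both legitimate for (3).

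The genuine gap is in your treatment of part (4). That statement is not about the fixed point alone: $u$ there is an arbitrary vector field of the form $u=\varphi\circ q_2$ with $\varphi$ univalent on $\fD_{r_\phi}(c_\phi)$ (it is exactly the test field in the invariant cone construction, and the bound is consumed in Step 2 of the proof of Proposition~\ref{prop:unstable_dir} for every admissible direction field), so the constants $1.066$ and $0.398$ must be uniform over an infinite-dimensional class. Your proposal bounds the oscillation of $\arg u$ by $\|(\log u)'\|_\infty$ times a diameter, ``via interval arithmetic using the Cauchy bounds from Proposition~\ref{prop:renorm_compactness}''; but those enclosures control $\phi_*,\psi_*$ only and give no information about $u$, and there is no uniform bound on $u'/u$ over all analytic perturbation directions (a factor $\varphi$ may vanish arbitrarily close to $\xi_*(0)=1$), so this step cannot be carried out as described. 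The paper obtains the required uniformity from univalent function theory: after a M\"obius renormalization pinning the zero at the right point, it applies the classical rotation theorem $|\arg(f(w)/w)|<\log\frac{1+|w|}{1-|w|}$ in the class $\mathcal{S}$, so only the computable geometry of the set $\xi_*(\lambda_*\,[0,|\lambda_*|^4])$ enters (through $|w|$), while the bound holds for every univalent factor $\varphi$ simultaneously. Without an argument of this kind, your verification of (4) proves a statement about a single function rather than the class-wide bound the cone-field argument needs, and the hyperbolicity proof downstream would not go through.
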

\begin{proof}

Notice that for the fixed point pair $\zeta_*=(\eta_*,\xi_*)$,
\begin{eqnarray}
\nonumber \eta_*' &=& c \circ \left( \eta'_* \circ \xi_* \cdot \xi_*'\right) \circ \lambda_* \circ c= c \circ \left( \eta'_* \circ \xi_* \cdot (c \circ \lambda_* \circ \eta_* \circ \lambda_* \circ c)'  \right) \circ \lambda_* \circ c\\
\label{eq:univbound} &=&\overline{\eta_*' \xi_* \circ \lambda_* \circ c} \cdot \left(\eta_*' \circ |\lambda_*|^2 \right)=\eta_*'  \implies  \overline{\eta_*' \xi_* \circ \lambda_* \circ c} = |\lambda_*|^{-2} { \phi_*'(z^2) \over \phi'_*(|\lambda_*|^4 z^2)}.
\end{eqnarray}
In particular, Claim $1)$ follows.
 
Consider the conformal transformation of $D_{\tr_\phi}(\tc_\phi)$, where $\tr_\phi$ and $\tc_\phi$ are as in Proposition $\ref{prop:renorm_compactness}$, onto the unit disk which maps the point $|\lambda_*|^4 z^2$ to zero:
$$ t^{-1}:=m \circ l, \quad l(x)={x  - \tc_\phi \over \tr_\phi}, \quad m(x):={x-l(|\lambda|^4 z^2) \over 1 - \overline{l(|\lambda|^4 z^2)} x}.$$
The function 
$$f(x)={ \phi_*(t(x)) -\phi_*(t(0))  \over \partial_x \phi_*(t(x))\arrowvert_{x=0} }$$
is univalent on a unit disk, normalized so that $f(0)=0$ and $f'(0)=1$, and
$$f'(t^{-1}(z^2))= { \phi_*'(z^2)   \over  \phi_*'(|\lambda|^4 z^2)  } {t'( (t^{-1}(z^2) ) \over t'(0)}.$$
We can now use  a classical distortion bound:
$$|\arg(f'(z))|<4 \arcsin{|z|},\ { \rm whenever} \ |z| <{1 \over \sqrt{2}},$$
in the class $\cS$ of functions $f$ univalent on $\fD$ and normalized so that $f(0)=0$, $f'(0)=1$, to get
\begin{eqnarray}
\nonumber && \left| \arg \left( { \phi_*'(z^2)   \over  \phi_*'(|\lambda|^4 z^2)  } {t'( (t^{-1}(z^2) )) \over t'(0) } \right) \right| = \\
\nonumber &&\phantom{aaaa}= \left| \arg \left( { \phi_*'(z^2)   \over  \phi_*'(|\lambda|^4 z^2)  } \right) +  \arg \left( \left( \tr_\phi^2-\overline{(|\lambda_*|^4 |z|^2 -\tc_\phi)}  (z^2-\tc_\phi) \right)^2 \right) \right| \\
\nonumber  &&\phantom{aaaa}= \left|\arg \left(t^{-1} (z^2) \right) \right| < 4 \arcsin\left( \left|\tr_\phi {1-|\lambda_*|^4 \over \tr^2_\phi-\overline{( |\lambda_*|^4 z^2-\tc_\phi  )}(z^2-\tc_\phi) }  |z|^2  \right|  \right).
\end{eqnarray}
Therefore,
\begin{eqnarray}
\nonumber | \arg{ \left( \eta_*'(\xi_*(\lambda_* c(z))) \right) }| &<&  4 \arcsin\left( \left|\tr_\phi{1-|\lambda_*|^4 \over \tr^2_\phi-\overline{( |\lambda_*|^4 z^2-\tc_\phi  )}(z^2-\tc_\phi) }  |z|^2  \right|  \right) \\
\label{eq:univbound1} &+& \left| \arg \left( \left( \tr_\phi^2-\overline{(|\lambda_*|^4 |z|^2 -\tc_\phi)}  (z^2-\tc_\phi) \right)^2 \right) \right|.
\end{eqnarray}
The bound in part $3)$ of the Lemma is obtain by evaluating first ten terms in the sum on a computer, and then estimating the remainder using simpler bounds on $\arcsin$ and $\arg$ functions in formula $(\ref{eq:univbound1})$. 
  
To demonstrate  the fourth claim, we first recall  the classical rotation bound
$$ \left|\arg{\left( { f(w) \over w      }   \right)}  \right|<\ln {1+|w| \over 1-|w|}$$
in the class $\cS$. In our case, $u=\varphi \circ q_2$  where  $\varphi$ is univalent on $\fD_{r_\phi}(c_\phi)$. Let $l(z)$ be the affine transformation that maps $\fD$ onto $\fD_{r_\phi}(c_\phi)$, set $a=l^{-1}(1)$, and let $M_\gamma(w)=(\gamma w +a )/(1+\bar{a} \gamma w)$, $|\gamma|=1$,   be the fractional linear transformation that preserves the unit disk and  maps $0$ into $a$. Consider the family 
$$f_\gamma(w)={ \varphi(l(M_\gamma(w)))-\varphi(1)  \over \varphi'(1) r_{\phi} \gamma (1-|a|^2)},$$
whose members are in $\cS$. Let point $w$ be the preimage in $\fD$  of some $\xi_*(\lambda_*(\bar{z}))$ under $l \circ M$, 
$$  \left| \arg{ \left( { f_\gamma(w) \over w }  \right)}  \right|=\left|\arg{\left( { \varphi(l(M_\gamma(w)))-\varphi(1)   \over  \varphi'(1) \gamma w} \right) } \right|<\ln {1+|w| \over 1-|w|}.$$ 
Now, for every $w \in \fD$ there is a choice of $\gamma=\gamma(w)$, such that  $ \varphi'(1) \gamma w \in \fR$, then
$$\left|\arg{ \left( \varphi(l(M_\gamma(w)))-\varphi(1) \right)  } \right|<\ln {1+|w| \over 1-|w|}.$$
A computer implementation of this bound results in the bounds in part $4)$.

Part $2)$ has been found directly on the computer.

\flushright $\square$
\end{proof}

We will now demonstrate existence of an invariant cone field $C$.

\begin{prop}\label{prop:unstable_dir} 
The spectral radii of the linear operators $D \cR_*^2(\zeta_*)$ and $D \cRG^2(\zeta_*)$ are larger than $1$. 
\end{prop}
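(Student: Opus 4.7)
The plan is to exhibit an invariant cone field in $T_{\zeta_*}\cA_1(U,V)$ on which the squared differential $D\cR_*^2|_{\zeta_*}$ strictly expands, producing an eigenvalue of modulus greater than one. Since $\cT$ from (\ref{eq:uoperator}) models the dynamics of $D\cR_*|_{\zeta_*}$ on the $u$-coordinate (with $v$ determined by (\ref{eq:Cbetaw}), as indicated in the paragraph preceding (\ref{eq:uoperator})), it suffices to build the cone in the $u$-variable and verify that $\cT^2$ (which is complex-linear since $\cT$ is anti-linear) expands it strictly.

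First, using the invariant-line analysis of $\lambda_* \circ c$ from (\ref{eq:apm}) and \figref{fig:lines}, I would pick the expanding direction $a_+$ and define a pointwise angular cone $C_u(z) \subset \CC$ of aperture $\theta_0$ centered on $a_+$ (with a possibly $z$-dependent rotation absorbing the accumulated argument shifts) at each relevant evaluation point $z \in \{|\lambda_*|^{2n}\}_{n \ge 0} \cup \{\xi_*(0)\}$. Invariance under $\cT$, and hence under $\cT^2$, would be checked pointwise via (\ref{eq:uoperator0})--(\ref{eq:uoperatorxi0}) using the four quantitative bounds of \lemref{lem:conebounds}: part (1) supplies the real-positive leading factor $|\lambda_*|^{-2}$ at $z = 0$; part (2) bounds the single rotation at $z = \xi_*(0)$ by $1.062$; part (3), whose summability $\sum_n |\arg \eta_*'(\xi_*(\lambda_* c(|\lambda_*|^{2n})))| < 0.075$ is crucial, controls the cumulative argument drift along the orbit $\{|\lambda_*|^{2n}\}_{n\ge 1}$; and part (4) bounds the variation of $\arg u$ after composition with $\xi_* \circ \lambda_* \circ c$.

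Next I would extract the expansion rate. The pointwise identity (\ref{eq:uoperator0}), combined with $\eta_*'(\xi_*(0)) = |\lambda_*|^{-2}$, gives
$$\cT(u)(0) = \bar\lambda_*^{-1}\,\overline{u(\xi_*(0))} + |\lambda_*|^{-2}\bar\lambda_*^{-2}\,u(0),$$
in which the second term has modulus $|\lambda_*|^{-4}|u(0)|$ and dominates the first (of modulus $|\lambda_*|^{-1}|u(\xi_*(0))|$) by a factor of $|\lambda_*|^{-3}$. Since $|\lambda_*|^{-2} \approx 1.8$ by part (vi) of Main Theorem 1, iterating yields strict expansion of $\cT^2$ on the cone at rate $\ge |\lambda_*|^{-8}(1 - O(\theta_0)) \gg 1$. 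Compactness of $\cT^2$, inherited from \thmref{mainthm1}(iv) via \propref{prop:renorm_compactness}, together with cone-preservation and strict expansion, produces an eigenvector inside the cone with eigenvalue of modulus strictly greater than one; this eigenvalue belongs to the spectrum of $D\cR_*^2|_{\zeta_*}$, establishing the first claim. The passage to $D\cRG^2|_{\zeta_*}$ is routine: by \propref{fixed-pt-commute} the pair $\zeta_*$ is genuinely commuting, so the projection $\cP$ fixes $\zeta_*$ and $D\cP|_{\zeta_*}$ is the identity on $T_{\zeta_*}\cM(U,V)$; the unstable eigenvector lies in this subspace, and the same eigenvalue therefore appears in $\mathrm{sp}(D\cRG^2|_{\zeta_*})$. (Alternatively, \lemref{difference-spectrum} relates the spectra of $\cR_*^2$ and $\cR^2$ by only the extra benign eigenvalue $\{1\}$, and $\cRG$ coincides with $\cR$ on $\cM(U,V)$.)

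The main obstacle will be the balancing of the aperture $\theta_0$: it must be wide enough to absorb the cumulative angular drifts bounded by \lemref{lem:conebounds}(2)--(4) (whose worst-case sums approach $\pi/2$) and simultaneously narrow enough that the subdominant term $\bar\lambda_*^{-1}\overline{u(\xi_*(0))}$ of (\ref{eq:uoperator0}) cannot reduce the effective expansion below one. The explicit numerical constants $0.075$, $0.398$, $1.062$ of \lemref{lem:conebounds} are precisely tuned to make this balance feasible given the spectral gap $|\lambda_*|^{-2} \approx 1.8$.
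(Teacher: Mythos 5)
There is a genuine gap, and it sits precisely at the point where the paper's argument is most delicate: the passage from an expanding direction for $D\cR_*$ to one for $D\cRG=\cP\circ\cR$. Your expansion mechanism at the point $0$ uses (\ref{eq:uoperator0}) with the factor $\eta_*'(\xi_*(0))=|\lambda_*|^{-2}$ acting on $u(0)$, so it only produces growth for vectors with $u(0)\neq 0$. But by (\ref{eq:diraction}) the second component of such a vector satisfies $v(0)=\overline{\lambda_*^{-1}u(0)}\neq 0$, which violates the linearization of the normalization $\psi(0)=1$; hence any expanding eigenvector of $D\cR_*^2$ obtained this way does \emph{not} lie in $T_{\zeta_*}\cM(U,V)$. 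Your final step ("the unstable eigenvector lies in this subspace, so $D\cP$ is the identity on it and the eigenvalue survives for $D\cRG^2$") is therefore false for the vector you constructed: the projection differential $D\cP$ is a nontrivial projection along the directions $(az^4+bz^6,c)$ and can destroy exactly this direction. Likewise, \lemref{difference-spectrum} compares $\cR_*$ with $\cR$, not with $\cRG$, so it does not rescue the transfer. (A secondary, repairable, inaccuracy: the claim that the term $|\lambda_*|^{-4}u(0)$ "dominates" $\bar\lambda_*^{-1}\overline{u(\xi_*(0))}$ by $|\lambda_*|^{-3}$ is unjustified, since $|u(0)|$ and $|u(1)|$ are unrelated; what actually prevents cancellation is the cone condition, under which the two terms add constructively.)

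The paper avoids this trap by working from the start with perturbations relevant to $\cRG$: vector fields $u(z)=az^2+bz^4+\cdots$, so that $u(0)=0$, $v(0)=0$, $D\cR_*=D\cR$ on such vectors, and the projection contributes only $O(z^4)$ corrections. For these vectors the factor $|\lambda_*|^{-4}$ gives no gain at all (it acts on $|u(|\lambda_*|^{2(n+1)})|\sim|\lambda_*|^{4(n+1)}$, which after multiplication is comparable to $|u(|\lambda_*|^{2n})|$); the expansion instead comes from the second term in (\ref{eq:cT2n}), namely $u(\xi_*(\lambda_*|\lambda_*|^{2n}))\sim |\lambda_*|^{2n}$, whose decay rate $|\lambda_*|^{2n}$ beats the decay $|\lambda_*|^{4n}$ of $u$ itself near $0$, so that $\cT$ strictly increases the sup norm of $u$ on small neighborhoods of $0$ within the invariant cones of \lemref{lem:conebounds}. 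This mechanism, which is the heart of the paper's Step 3, is absent from your proposal, and without it (or some substitute argument showing that an expanding direction survives the projection $\cP$) the conclusion for $D\cRG^2(\zeta_*)$ does not follow.
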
 
\begin{proof}
First, rewrite the equation $(\ref{eq:uoperatorxi0})$ as 
$$\cT(u)\circ \xi_*(0)=c \circ \lambda_*^{-1} \circ \left(u(|\lambda_*|^2) + c \circ \tilde{\lambda}_*^{-1} \circ  u(|\lambda_*|^2) \right), \quad \tilde{\lambda}_*= {\lambda_* \over \overline{\eta'_*(|\lambda_*|^2)}}.$$
Similarly to $(\ref{eq:apm})$, set  
$$\tilde{a}_\pm={ -\Re(\tilde{\lambda}_*) \pm |\tilde{\lambda}_*| \over \Im(\tilde{\lambda}_*)},$$
and let $\tilde{v}_\pm$ be the unit vectors of the invariant lines $\tilde{l}_\pm=(t+i \tilde{a}_\pm t)$ under the map $\tilde{\lambda}_* \circ c$.

Given a vector $v \in \fC$ and a positive $\alpha< \pi/2$, we denote a cone
$$C(v,\alpha)=\{x \in \fC \setminus \{0\}: |\arg(v)-\arg(x)| \le \alpha \} \cup \{0\}.$$

\medskip

\noindent  {\it Step 1)} We assume that cone field $\cC_u$ has some particular values at points $0$, $|\lambda_*|^2$ and $1$, and demonstrate that $\cT(\cC_u)(0) \subset {\rm Int} \ \cC_u(0)$ and $\cT(\cC_u)(1) \subset {\rm Int} \ \cC_u(1)$.

We make the following assumptions:
\begin{eqnarray}
\label{eq:conel2} \cC_u(|\lambda_*|^2)&:=&C(\tilde{v}_+,\alpha(|\lambda_*|^2)), \quad \alpha(|\lambda_*|^2)=\arccos(0.5967), \\
\label{eq:conel1} \cC_u(1)&:=&c(\lambda_*(C(\tilde{v}_+,\alpha(|\lambda_*|^2))))=:C(c(\lambda_*(\tilde{v}_+)),\alpha(1)))), \\
\label{eq:cone0} \cC_u(0)&:=&C(\tilde{v}_+,\alpha(0)), \quad \alpha(0)=\arccos(0.71).
\end{eqnarray}

The cone $C(\tilde{v}_+,\alpha(|\lambda_*|^2))$ is invariant under the map $c \circ \tilde{\lambda}_*^{-1}$, therefore for any $u(z)$ such that $u(|\lambda_*|^2) \in C(\tilde{v}_+,\alpha(|\lambda_*|^2))$, we have that  
$$\cT(u)\circ \xi_*(0) \in c(\lambda_*^{-1}(C(\tilde{v}_+,\alpha(|\lambda_*|^2))))=: C(c(\lambda_*^{-1}(\tilde{v}_+)),\alpha(1)),$$  
and, according to $(\ref{eq:uoperatorxi0})$, $\cT(u)(1) \in  \cC_u(1)$ whenever $u(|\lambda_*|^2) \in \cC_u(\lambda_*|^2)$. Additionally, the boundary $\partial \cC_u(|\lambda_*|^2)$ is a union of two lines, call them $l$ and $r$ and , and  the map $c \circ \tilde{\lambda}_*^{-1}$ interchanges them: $c \circ \tilde{\lambda}_*^{-1}(l)=r$ and   $c \circ \tilde{\lambda}_*^{-1}(r)=l$. Therefore, if $u(|\lambda_*|^2) \ne 0$ is in $\partial \cC_u(|\lambda_*|^2)=$, say $u(|\lambda_*|^2)\parallel l$, then $c \circ \tilde{\lambda}_*^{-1}(u(|\lambda_*|^2)) \parallel r$ and  $u(|\lambda_*|^2)+ c \circ \tilde{\lambda}_*^{-1}(u(|\lambda_*|^2)) \in {\rm Int} \ \cC_u(|\lambda_*|^2)$, and therefore, $\cT(\cC_u)(1) \in  {\rm Int} \ \cC_u(1)$.

We verify on the computer that  the cone $c(\lambda_*^{-1}(C(c(\lambda_*^{-1}(\tilde{v}_+)),\alpha(1))))$ is contained in  ${\rm Int} \ C(\tilde{v}_+,\alpha(0))$ (see Fig. \ref{fig:fields}). According to $(\ref{eq:uoperator0})$, this, together with the invariance of $\cC_u(0)$ under the map $c \circ \lambda_*^{-1} \circ c \circ \lambda_*^{-1}$ implies that $\cT(\cC_u)(0) \subset {\rm Int} \ \cC_u(0)$.

\begin{figure}
\centering
\vspace{2mm}       
\begin{tabular}{c c} 
\includegraphics[height=6cm,width=5.2cm,angle=-90]{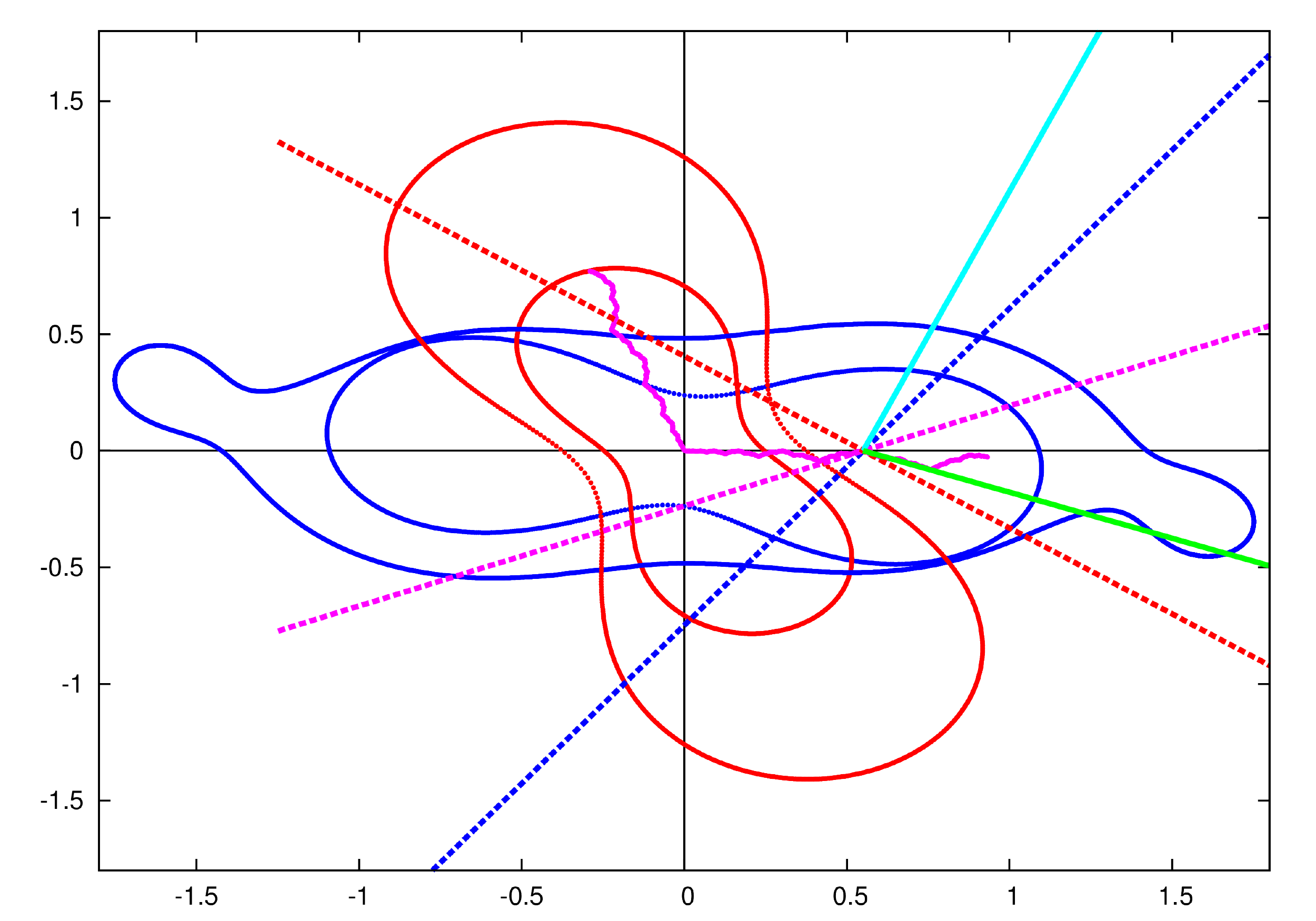}  &  \includegraphics[height=6cm,width=5.2cm,angle=-90]{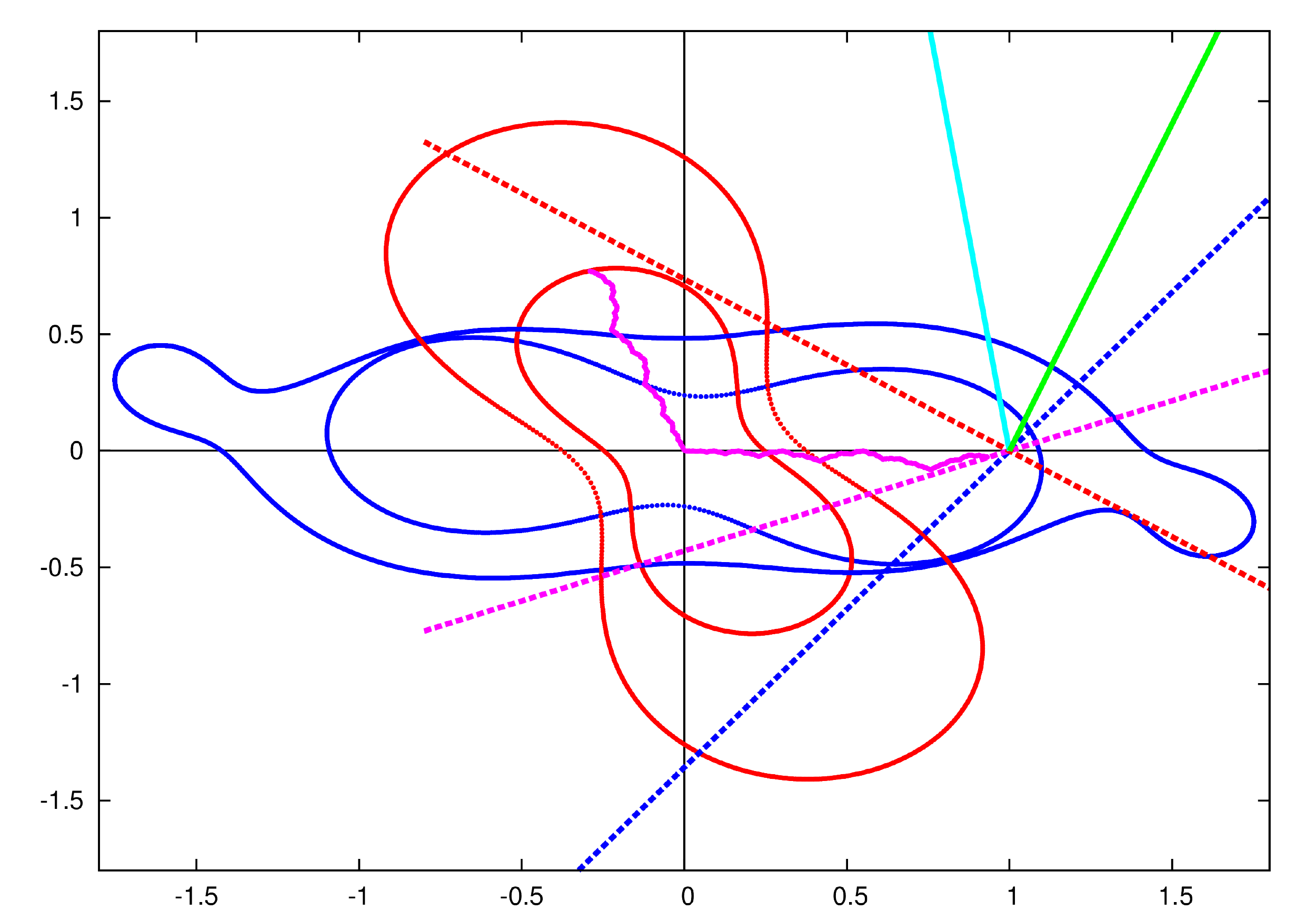}   \\
$a)$ & $b)$ \\
 \includegraphics[height=6cm,width=5.2cm,angle=-90]{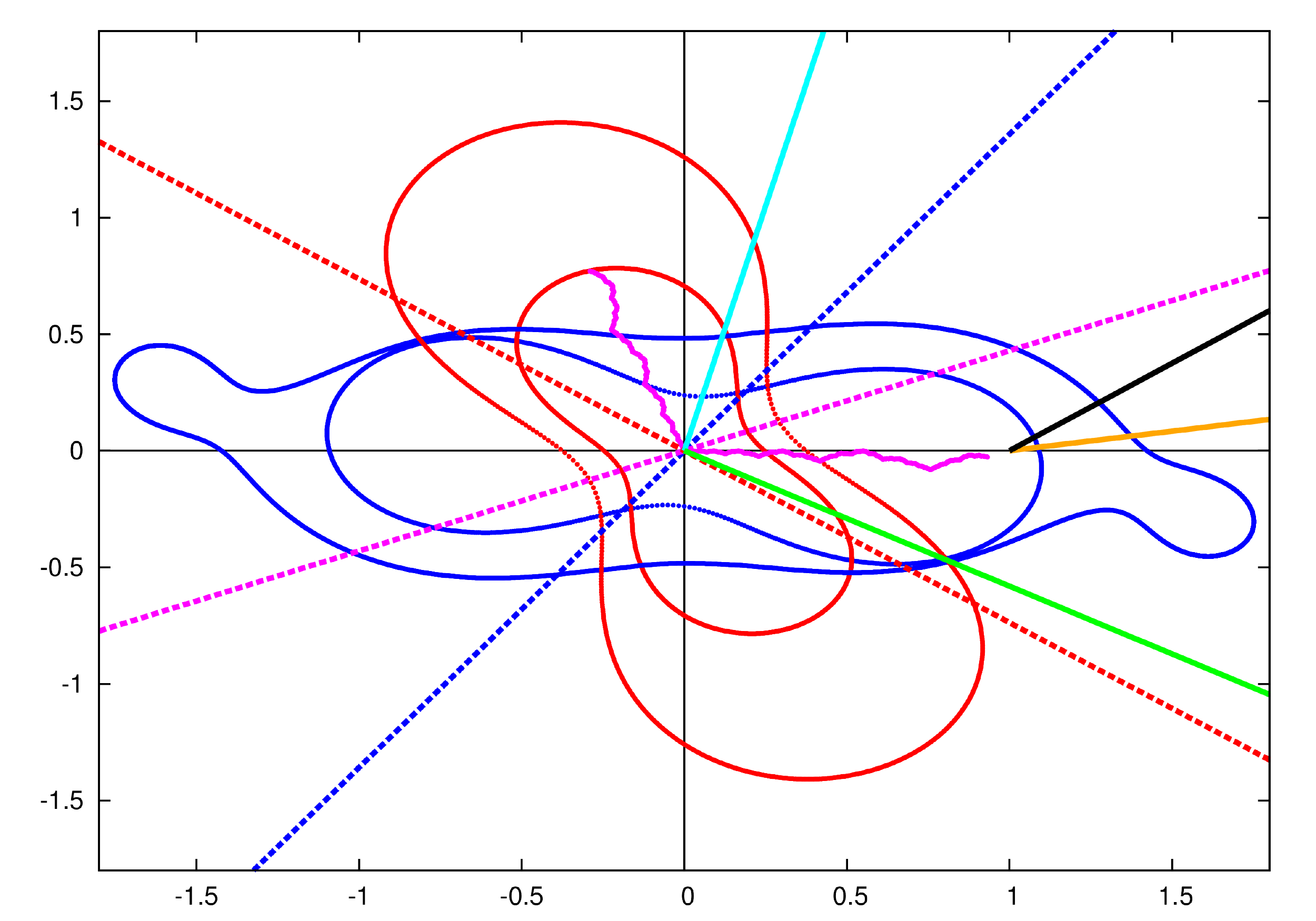}  &  \includegraphics[height=6cm,width=5.2cm,angle=-90]{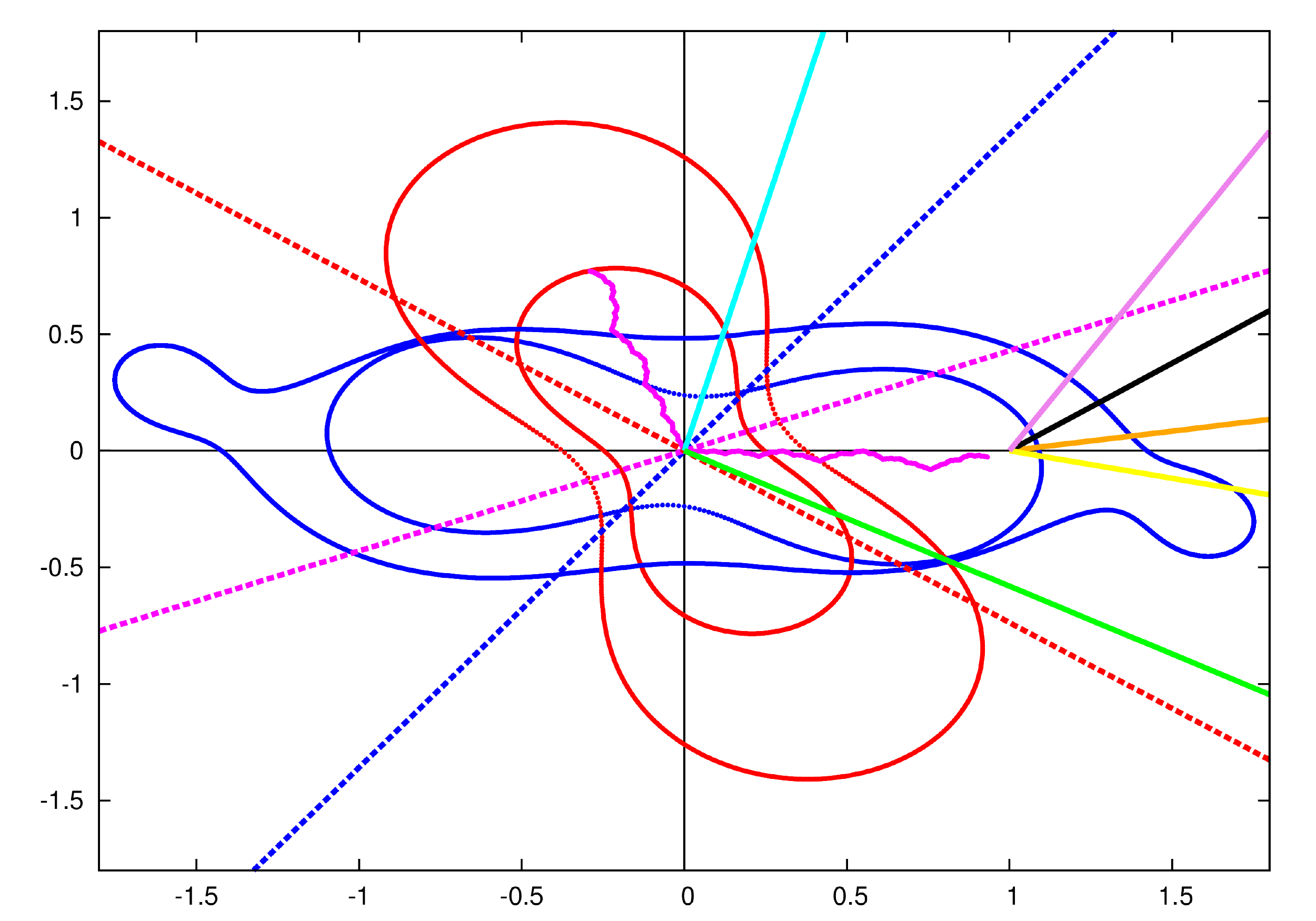}   \\
$c)$ & $d)$
\end{tabular}
\caption{The invariant lines $l_+$, $l_-$ and $\tilde{l}_+$ are drawn in blue, red and magenta, respectively.  $a)$ The cone $C(\tilde{v}_+,\alpha(|\lambda_*|^2))$ is between cyan and green lines.  b)  The cone  $C(c(\lambda_*(\tilde{v}_+)),\alpha(1)):=c(\lambda_*(C(\tilde{v}_+,\alpha(|\lambda_*|^2))))$ is between cyan and green lines. c) The image of the cone $C(c(\lambda_*(\tilde{v}_+)),\alpha(1))$  under $c \circ \lambda_*$ (orange to black) is contained in the cone $\cC_u(0)$ (green to cyan). d) $\varepsilon$-thickening (yellow and violet) of the  image of the cone $C(c(\lambda_*(\tilde{v}_+)),\alpha(1))$  under $c \circ \lambda_*$ (orange to black) is, again,  contained in the cone $\cC_u(0)$ (green to cyan)}
\label{fig:fields}       
\end{figure}

\medskip

\noindent  {\it Step 2)} We continue to make assumptions on the field $\cC(z)$:
\begin{equation}
\label{eq:conesdef} \cC_u(|\lambda_*|^{2 n})=C(\tilde{v}_+,\alpha(0)+  \sum_{i=n}^\infty |\arg(\eta_*'(\xi_*(\lambda_*(c(|\lambda_*|^{2i} )))))|), \quad n=2,3, \ldots .
\end{equation}

By the bound in part 4) in Lemma \ref{lem:conebounds},  $u(\xi_*(\lambda_*|\lambda_*|^{2n}))$, $n=2,3, \ldots$ is in $C(c(\lambda_*(\tilde{v}_+)) , \alpha(1)+\varepsilon)$, $\varepsilon=0.398$ as in part 4) of Lemma \ref{lem:conebounds}, whenever $u(\xi_*(0)) \in C(c(\lambda_*(\tilde{v}_+)),\alpha(1))$. We verify on the computer that 
\begin{equation}\label{eq:cone1}
c(\lambda_*^{-1}(C(c(\lambda_*(\tilde{v}_+)),\alpha(1)+\varepsilon))) \subset {\rm Int} \ \cC_u(0):= {\rm Int} \ C(\tilde{v}_+,\alpha(0))
\end{equation}
(see Fig. \ref{fig:fields}). Furthermore,
\begin{eqnarray}
\nonumber  c \circ \lambda_*^{-1} \! \! & \! \! \circ  \!  \! & \! \! \left( \eta'_*(\xi_*(\lambda_*(c(|\lambda|^{2n}) )) \cdot (c \circ \lambda_*^{-1} \circ  u (|\lambda|^{2(n+1)})) \right) \in \\ 
\label{eq:cone2}  \! \! &\! \! \in \! \! & \! \! C(\tilde{v}_+,\alpha(0)+\sum_{i=n}^\infty |\arg(\eta_*'(\xi_*(\lambda_*(c(|\lambda_*|^{2i} )))))|)
\end{eqnarray}
whenever  
$$u (|\lambda|^{2(n+1)}) \in C(\tilde{v}_+,\alpha(0)+\sum_{i=n+1}^\infty |\arg(\eta_*'(\xi_*(\lambda_*(c(|\lambda_*|^{2i} )))))|).$$
Conditions $(\ref{eq:cone1})$ and $(\ref{eq:cone2})$ imply that $\cT(u)(|\lambda_*|^{2 n})$, given by $(\ref{eq:uoperator2n})$,  is in ${\rm Int} \ \cC_u(|\lambda_*|^{2 n})$, as defined in $(\ref{eq:conesdef})$ whenever $u(|\lambda_*|^{2 (n+1)}) \in \cC_u(|\lambda_*|^{2 (n+1)})$, $n=1,2, \ldots$. Since $\arccos(0.5967) - \arccos(0.71)>0.075$, the cone  $\cT(\cC_u)(|\lambda_*|^{2})$, according to $(\ref{eq:cone2})$, is contained in ${\rm Int} \ \cC_u(|\lambda_*|^2)$ as defined in $(\ref{eq:conel2})$. This demonstrates invariance of cones $\cC(|\lambda_*|^{2 n})$ for all $n=1,2, \ldots$.

\medskip

\ignore{
\noindent  {\it Step 3)}  We will now demonstrate that the operator $D \cR_*(\zeta_*)$ has an eigenvalue outside the unit circle which corresponds to an eigenvector which is not tangent to the manifold of almost commutative pairs, that is, not an eigenvector in $T_{\zeta_*} \cM(U,V)$.

The equation $(\ref{eq:uoperator0})$ together with part $1)$  of Lemma $\ref{lem:conebounds}$, and the fact that $\alpha(0)<\pi/4$,  implies that 
$$|\cT(u)(0)|  \ge  {1 \over |\lambda_*|^4} |u(0)|.$$
In particular, this implies that $\cT$ maps a vector field $u$ which are non-zero in a neighborhood of point $0$ into a field which is non-zero on some neighborhood of $0$, and which, furthrmore, there exists a neighborhood $\tilde{Z}$ of $0$ and $1 < \mu_u  \le {|\lambda_*|^{-4}}$, such that
\begin{equation}
\label{eq:expansion}  |\cT(u)(z)| \ge \mu_u |u(z)|
\end{equation}
for all $z \in \tilde{Z}$.

By a standard result (see, e.g. Proposition 6.2.12 and Corollary 6.2.13 in \cite{KH}), expansion of non-zero vectors in $\cC_u(z)$ for $z \in \tilde{Z}$   and   the   invariance condition $\cT(\cC_u) \subset {\rm Int} \ \cC_u$ on the set $\tilde{Z} \cap E$, where $E=\{|\lambda_*|^{2n}\}_{n=0}^\infty \cup 0$, implies that any  analytic direction field in $\cC_u(z)$ converges under the operator $\cT$ in $\tilde{Z} \cap E$ to a  analytic invariant direction field for $\cT$.  Since the set $\tilde{Z} \cap E$ contains a limit point $0$,  any  analytic direction field in $\cC_u(z)$ in $Z$ converges under the operator $\cT$ in all of $Z$.


The equation $(\ref{eq:expansion})$ implies the claim about the spectrum. Since $D \cR_*^2(\zeta_*)$ is a compact operator whose spectrum, outside of $0$, consists of eigenvalues, the largest eigenvalue is larger than $\mu_u$. 

Notice, that this eigenvector $(u_*,v_*)$ can  not lie in $T_{\zeta_*}\cM(U,V)$ since both $u_*(0)$ and $v_*(0)=c \circ \lambda^{1}_* u_*(0)$ are non-zero.

It is easy to show that the spectrum of the operator $\cR_*$ differs from that of $\cR$ by a single neutral eigenvalue $\delta=1$ (see Subsection \ref{subsec:coord_changes}). In particular, the expanding parts of the spectra coincide.

}

\medskip

\noindent  {\it Step 3)}  Assume now, that $u(z)=a z^2+b z^4 + \ldots \in \cC(z)$ is non-zero in some punctured neighborhoods $U \setminus \{ 0\}$ of $0$ and  $V \setminus \{1\}$ of $1$. Then, the equation $(\ref{eq:uoperator0})$ together with part $1)$  of Lemma $\ref{lem:conebounds}$, and the fact that $\alpha(|\lambda_*|^{2 n})<\pi/4$,  implies that 
\begin{equation}
\label{eq:cT2n} |\cT(u)(|\lambda_*|^{2 n})| > {1 \over |\lambda_*|^4} |u( |\lambda_*|^2 |\lambda_*|^{2n})| + \gamma {1 \over |\lambda_*|}  |u(\xi_*(\lambda_* |\lambda_*|^{2 n}))|,
\end{equation}
for some $\gamma>0$. Since $u(z)=O(z^2)$ in a neighborhood of $0$ and $u(1+z)=O(z)$ in a neighborhood of $1$, there exists $C>0$ such that $\left||u(z)|-{1 \over 2} |u''(0)||z|^2 \right|<C |z|^4$ and $\left| |u(\xi_*(z))|-2|u'(1)||\psi'(0)||z| \right|<C |z|^2$ for sufficiently small $z$. Therefore, according to $(\ref{eq:cT2n})$, for sufficiently large $n$,
\begin{eqnarray}
\nonumber |\cT(u)(|\lambda_*|^{2 n})| \!\! & \!\!> \!\!&  \!\! \left\{ \! {1 \over 2} |u''(0)||\lambda_*|^{4 n} \! \!-C |\lambda_*|^4 |\lambda_*|^{8 n} \! \right\}\!+\! \gamma \left\{ 2|u'(1)||\psi'(0)||\lambda_*|^{2 n} \!\!-C |\lambda_*||\lambda_*|^{4 n}   \right\}, \\
\nonumber  \!\!& \!\!= \!\!&  \!\!\left\{ {1 \over 2} |u''(0)||\lambda_*|^{4 n} +C |\lambda_*|^{8 n}  \right\}+ \gamma \left\{ 2|u'(1)||\psi'(0)||\lambda_*|^{2 n} -C |\lambda_*||\lambda_*|^{4 n} - \right. \\
\nonumber  \!\!& \!\!\phantom{=} \!\!&  \!\!\phantom{\left\{ {1 \over 2} |u''(0)||\lambda_*|^{4 n} +C |\lambda_*|^{8 n}  \right\}}- \left. C(1+|\lambda_*|^4)  |\lambda_*|^{8 n} \right\} \\
\nonumber  \!\!& \!\!> \!\!& \!\! |u(|\lambda_*|^{2 n})|\!+\! \gamma \tilde{C} |\lambda_*|^{2 n},
\end{eqnarray}
for some constant $\tilde{C}>0$. Therefore, $\cT$ strictly increases the uniform norm of $u$ on small neighborhoods of $0$.  The corresponding statement in the norm $||\cdot ||_1$ follows from \propref{norm-equiv}.

We remark, that $D \cR_*(\zeta_*)(u,v)= D \cR(\zeta_*)(u,v)$ whenever $v(0)=0$. Furthermore for such $(u,v)$, we have on a neighborhood of $0$
$$D \cRG(\zeta_*)(u,v)(z)=(\tilde{u},\tilde{v})(z)+(D a(\tilde{u},\tilde{v}) z^4 + D b(\tilde{u},\tilde{v}) z^6,0),$$
where $(\tilde{u},\tilde{v})=D \cR_*(\zeta_*)(u,v)$. Therefore, for the operator $\cT$ is modified as follows:
$$\cT(u)(z):=c \circ \lambda_*^{-1} \circ \left(u(\xi_*(\lambda_*(c(z))))  +  \eta'_*(\xi_*(\lambda_*(c(z)))) \cdot (c \circ \lambda_*^{-1} \circ  u(|\lambda_*|^2 z )) \right) +O(z^4),
$$
and the constructed cones $\cC_u(|\lambda_*|^{2 {n}}) \times \cC_v(|\lambda_*|^{2 n})$ and $\cC_u(0) \times \cC_v(0)$ remain invariant  under the action of the differential $D \cRG(\zeta_*)$ for large $n$ (small neighborhood of zero). An analytic direction field in a punctured neighborhood of zero converges on this neighborhood under the action of $\cT$, and, hence, converges everywhere on $Z$.
\flushright $\square$
\end{proof}

\subsection{Renormalization hyperbolicity}

To diagonalize the differential, we first find numerically eigenvalues and eigenvectors of the matrix $L_0$. It is elementary to check that the anti-linearity of the renormalization differential implies that the eigenvalues of $L_0$ come in pairs: if $\lambda$ is an eigenvalue, then so is $-\lambda$:
\begin{lemma} \label{lem:spectra}
Let $\sigma$  be the spectrum of the linear operator $D \cRG[(\phi_*,\psi_*)] \circ c$ on $T \cA_1(U,V)$. Then the spectrum of the linear operator $D \cRG[(\phi_*,\psi_*)]$ in $T \tilde{\cA}_1(U,V)$ is 
$$\tilde{\sigma}=-\sigma \cup \sigma.$$
\end{lemma}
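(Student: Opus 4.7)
The plan is to exploit the $\CC$-antilinearity of the differential $A := D\cRG|_{(\phi_*,\psi_*)}$ together with the specific form of the real basis $\cL$. The heart of the argument will be to relate eigenvectors of the complex-linear operator $K := A \circ c$ on $T\cA_1(U,V)$ to eigenvectors of the real-linear operator $A$ on (the complexification of) $T\tilde{\cA}_1(U,V)$, at the cost of a possible sign.

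First, antilinearity $A(iv)=-iA(v)$ translates, after passing to the real Banach space $T\tilde{\cA}_1(U,V)$, to the anticommutation relation $AJ+JA=0$, where $J$ is the real-linear operator encoding multiplication by $i$ in the basis $\cL$ (so that $Ju_k=\upsilon_k$ and $Jv_k=w_k$). Hence $JAJ^{-1}=-A$, and so $A$ and $-A$ are similar as real-linear operators. Consequently the spectrum $\tilde\sigma$ of (the complexification of) $A$ is automatically symmetric under $\mu\mapsto -\mu$: if $\xi$ is an eigenvector of $A_\CC$ with eigenvalue $\mu$, then $J\xi$ is an eigenvector with eigenvalue $-\mu$. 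This already explains why eigenvalues come in $\pm$ pairs.

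The substantive content is then a bijection between eigenpairs of $K$ on the complex space and $\pm$-orbits in $\tilde\sigma$. The key observation is that, in the basis $\cL$, the involution $c$ acts as the $\RR$-linear map fixing $\mathrm{span}_\RR\{u_k,v_k\}$ and negating $\mathrm{span}_\RR\{\upsilon_k,w_k\}$, so an equation $Kv=\lambda v$ on $T\cA_1(U,V)$ extends, via the identity $K=A\circ c$, to an equation $A_\CC \tilde v=\lambda \tilde v$ on $T\tilde{\cA}_1(U,V)\otimes_\RR\CC$ for the vector $\tilde v$ obtained from $v$ and $cv$ by the standard passage to the complexification. Combined with the previous step, this shows that both $\lambda$ and $-\lambda$ lie in $\tilde\sigma$, proving $\sigma\cup(-\sigma)\subseteq \tilde\sigma$. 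For the reverse inclusion, any eigenvector $\xi$ of $A_\CC$ with eigenvalue $\mu$ spans, together with $J\xi$, a two-dimensional $A_\CC$-invariant subspace isomorphic to a complex line in $T\cA_1(U,V)$; reinterpreting this line back in $T\cA_1$ produces a $K$-eigenvector with eigenvalue $\mu$ or $-\mu$, giving $\tilde\sigma\subseteq \sigma\cup(-\sigma)$.

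The main delicate point is the precise correspondence between complex vectors in $T\cA_1(U,V)$ and vectors in the complexification of $T\tilde{\cA}_1(U,V)$: two distinct complex structures are in play (the intrinsic one on $T\cA_1(U,V)$ given by $J$, and the new one coming from complexifying $T\tilde{\cA}_1(U,V)$), and the antilinearity of $c$ must be invoked carefully to translate between them. Once the identifications are fixed and $A$ is written out in block form relative to the decomposition $T\tilde{\cA}_1(U,V)=\mathrm{span}_\RR\{u_k,v_k\}\oplus\mathrm{span}_\RR\{\upsilon_k,w_k\}$, the spectral identity $\tilde\sigma=-\sigma\cup\sigma$ drops out as a direct characteristic-polynomial computation.
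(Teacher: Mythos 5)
Your step 1 (the anticommutation $AJ=-JA$ for $A=D\cRG|_{(\phi_*,\psi_*)}$, hence the symmetry $\tilde\sigma=-\tilde\sigma$) is correct, but the central step of your argument has a genuine gap, and it is exactly the step you wave through with ``drops out as a direct characteristic-polynomial computation.'' From $Kv=\lambda v$, i.e.\ $A(cv)=\lambda v$, you cannot conclude that $\lambda\in\tilde\sigma$: knowing $A$ on the real span of $\{cv,\,Jcv\}$ does not give you an $A$-invariant (real) subspace, because nothing tells you what $A$ does to $v$ and $Jv$ themselves, and the passage between the two complex structures cannot repair this. Antilinearity alone is provably insufficient: on the one-dimensional model $\CC$ with $c(z)=\bar z$, the antilinear operator $A(v)=i\bar v$ has $K=A\circ c=$ multiplication by $i$, so $\sigma=\{i\}$ and $-\sigma\cup\sigma=\{\pm i\}$, while $A$ as a real-linear operator in the basis $\{1,i\}$ is $\left(\begin{smallmatrix}0&1\\1&0\end{smallmatrix}\right)$ with spectrum $\{\pm1\}$. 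What antilinearity actually gives is $\tilde\sigma\setminus\{0\}=\{\mu:\ \mu^2\in\sigma(A^2)\setminus\{0\}\}$ together with $A^2=K\,(cKc)$; this equals $\sigma(K)\cup(-\sigma(K))$ only if $cKc=K$, equivalently $A$ commutes with $c$, i.e.\ $D\cRG|_{(\phi_*,\psi_*)}$ maps the $c$-real subspace ${\rm span}_\RR\{u_k,v_k\}$ into itself. You state how $c$ acts in the basis $\cL$, but never assert or verify this compatibility of $A$ with $c$, and it is not a formal consequence of anything you use. (The failure is not cosmetic: for $K=\left(\begin{smallmatrix}0&1\\1&i\end{smallmatrix}\right)$ and $A(v)=K\bar v$ even the moduli of $\tilde\sigma$ and of $\sigma$ differ, so one cannot rescue the argument by caring only about hyperbolicity.)

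Your ``reverse inclusion'' suffers from the same problem: the $A_\CC$-invariant plane ${\rm span}\{\xi,J\xi\}$ gives information about $A$, and converting it into a statement about $K=A\circ c$ again requires knowing how $c$ interacts with that plane. So the proof as proposed would fail at both inclusions. For comparison, the paper offers no written argument for this lemma (it is asserted as elementary just before the statement), so there is no route to match; but any correct proof must identify and use a specific compatibility between $D\cRG|_{(\phi_*,\psi_*)}$ and the chosen conjugation $c$ (for instance, reality of the matrix of $K$ in the basis $\{(l_\phi^k,0),(0,l_\psi^k)\}$, after which the realified operator splits as $K_\RR\oplus(-K_\RR)$ and the lemma is immediate), or else proceed through $\sigma(K\,cKc)$ and prove it equals $\sigma(K)^2$ in this setting. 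Identifying and justifying that extra structure is the missing content of the proof.
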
 
We verify numerically, that the matrix $L_0$ has $2(N_1+1)+2(N_2+1)$ eigenvalues, some of which have a multiplicity larger than $1$. We compute all real  eigenvalues, together with their eigenvectors, and the complex pairs, together with their eigenplanes, numerically using the standard linear algebra procedure in ADA.

At the next step, we choose the first $E \equiv 2(E_1+1)+2(E_2+1)$ eigenvectors $e_i$, $E=30$ and $E_2=20$, as the basis in the following linear subspace:
$${\rm span} \left\{ \{ (l_\phi^k,0) \}_{k=0}^{E_1},\{(0,l_\psi^k)\}_{k=0}^{E_2},\{(i l_\phi^k,0)\}_{k=0}^{E_1},\{(0,i l_\psi^k)\}_{k=0}^{E_2} \right\},$$ 
keeping 
$$\left\{ \{ (l_\phi^k,0) \}_{k=E_1+1}^{N_1},\{(0,l_\psi^k)\}_{k=E_2+1}^{N_2},\{(i l_\phi^k,0)\}_{k=E_1+1}^{N_1},\{(0,i l_\psi^k)\}_{k=E_1+1}^{N_2} \right\}$$ 
as the remaining basis vectors. The new basis will be referred to as $\cE$. 

To find the linear transformation from the basis $\cL$ to $\cE$, we first find the inverse $B^{-1}$ the matrix $B=\left[ \cE \right]$ numerically, and then find a bound on the inverse using a Newton method in the equation $B B^{-1}=\I$. 

Finally, we consider a projection of the bound $D$ on operator $D \cRG$ in  $B_r$ on the complimentary subspace to the span of the approximate eigenvectors $e_0$ and $e_1$ which numerically span the expanding subspace:
$$T=(I-S_0 -S_1) D (\I -S_0 -S1), \quad S_i=\left[
\begin{array}{c}
e_i^0  B^{-1}_i \cr
e_i^1  B^{-1}_i \cr
\ldots   \cr
e_i^N  B^{-1}_i \cr
\end{array}
\right], 
$$
where $e_i^k$ are the $k$-th component of the $i$-th eigenvector, and $B^{-1}_i$ stands for the $i$-th row of the matrix $B$. We compute rigorously that
$$\| T^3 \| <1,$$ 
which implies that the stable subspace of $D \cRG[(\phi_*,\psi_*)]$ in $T_{(\phi^*,\psi^*)} \tilde{A}_1(U,V)$ has real codimension two.  This together with Proposition \ref{prop:unstable_dir} implies Part (v) of the Main Theorem 1.

\ignore{
\subsection{Renormalization spectrum associated with the coordinate changes}\label{subsec:coord_changes}
 
In this section we will consider the part of the renormalization spectrum associated with the coordinate changes. Let $\sigma$ be a generator of a coordinate transformation, then
\begin{equation} \label{eq:change} (\id +t \sigma)^{-1} \circ \eta \circ (\id +t \eps \sigma)=\eta+t h_{\eta,\sigma}+O(t^2), \ {\rm where} \ h_{\eta,\sigma} = -\sigma \circ \eta + \eta' \cdot \sigma.
\end{equation}

Consider the operator  
$$\cR_* (\eta,\xi)=(\las^{-1} \circ \eta \circ \xi \circ \las, \las^{-1} \circ \eta \circ \las),$$
where $\las=-|\eta_*(0)|$. Then
\begin{eqnarray}
\nonumber D \cR_*(\eta_*,\xi_*) \left(h_{\eta_*,\sigma},h_{\xi_*,\sigma}\right)&=&( \las^{-1} \circ \left(-\sigma \circ \eta_* \circ \xi_*  +(\eta_* \circ \xi_*)' \cdot \sigma  \right) \circ \las, \\
\nonumber &\phantom{=}&\phantom{(} \las^{-1} \circ \left(-\sigma \circ \eta_*  +\eta_*' \cdot \sigma  \right) \circ \las ) \\
\nonumber &=&( - \las^{-1} \circ \sigma \circ \las \circ \eta_*  +\eta_*' \cdot (\las^{-1} \sigma  \circ \las), \\
\nonumber &\phantom{=}&\phantom{(} -\las^{-1} \circ \sigma \circ \las \circ \xi_*  +\xi_*' \cdot (\las^{-1} \circ \sigma  \circ \las) ) \\
\nonumber &=&  \left(h_{\eta_*,\las^{-1} \circ \sigma \circ \las},h_{\xi_*, \las^{-1} \circ \sigma \circ \las} \right),
\end{eqnarray}
and is equal to $\kappa \left(h_{\eta_*,\sigma},h_{\xi_*,\sigma}\right)$ whenever 
$$\las^{-1} \circ \sigma \circ \las= \kappa \sigma.$$
Consider monomial generators $\sigma_k(z)=z^k$, corresponding to the eigenvectors
$$e_k=\left(h_{\eta_*,\sigma_k},h_{\xi_*,\sigma_k}\right),$$ 
then
$$\las^{-1} \circ \sigma_k \circ \las= \las^{k-1} \sigma_k, $$
In particular, the constant generator of translations, $\sigma_0(z)=1$, corresponds to an expanding eigenvector of the operator $D \cR_*$, while the generator of rescalings $\sigma_1(z)=z$ corresponds to a neutral eigendirection of $D \cR_*$. At the same time $\span\{e_1\}$ is the null eigenspace for the operator $D \cR$ in $T_{(\eta_*,\xi_*)}\cE(U,V)$.

Perturbations in the  directions of the eigenvectors $e_k$, $k \ge 1$, preserve the class of a.c.s. pairs. Indeed, let $\tilde{\eta}=(\id +\eps \sigma_k)^{-1} \circ \eta \circ  (\id +\eps \sigma_k)$ and $\tilde{\xi}=(\id +\eps \sigma_k)^{-1} \circ \xi \circ  (\id +\eps \sigma_k)$, then $(\ref{eq:accond})$ implies
\begin{eqnarray}
\nonumber \tilde{\eta} \circ \tilde{\xi}(0)&=& -\sigma_k \circ \eta \circ \xi(0)  + (\eta \circ \xi) '(0) \cdot \sigma_k(0) +O(\eps^2)\\
\nonumber &=&-\sigma_k \circ \xi \circ \eta(0)  + (\xi \circ \eta) '(0) \cdot \sigma_k(0) +O(\eps^2)\\
\nonumber &=&\tilde{\xi} \circ \tilde{\eta}(0)+O(\eps^2) \\
\nonumber (\tilde{\eta} \circ \tilde{\xi})'(0)&=& -\sigma_k' \circ \eta \circ \xi(0) \cdot  (\eta \circ \xi)'(0)    + (\eta \circ \xi) ''(0) \cdot \sigma_k(0)+ (\eta \circ \xi) '(0) \cdot \sigma_k'(0)+\\
\nonumber &\phantom{=}& \phantom{-\sigma_k' \circ \eta \circ \xi(0) \cdot  (\eta \circ \xi)'(0)    + (\eta \circ \xi) ''(0) \cdot \sigma_k(0)} + O(\eps^2) \\
\nonumber &=&(\tilde{\xi} \circ \tilde{\eta})'(0)+O(\eps^2),
\end{eqnarray}
\begin{eqnarray}
\nonumber (\tilde{\eta} \circ \tilde{\xi})''(0)&=& -\sigma_k'' \circ \eta \circ \xi(0) \cdot  ((\eta \circ \xi)'(0))^2-\sigma_k' \circ \eta \circ \xi(0) \cdot  (\eta \circ \xi)''(0)   \\
\nonumber &\phantom{=}&+ (\eta \circ \xi) '''(0) \cdot \sigma_k(0)+2 (\eta \circ \xi) ''(0) \cdot \sigma_k'(0)+ (\eta \circ \xi) '(0) \cdot \sigma_k''(0)+O(\eps^2)\\
\nonumber &=&(\tilde{\xi} \circ \tilde{\eta})''(0)+O(\eps^2), \\
\nonumber (\tilde{\eta} \circ \tilde{\xi})'''(0)&=& -\sigma_k''' \circ \eta \circ \xi(0) \cdot  ((\eta \circ \xi)'(0))^3-3 \sigma_k'' \circ \eta \circ \xi(0) \cdot (\eta \circ \xi)'(0) \cdot  (\eta \circ \xi)''0) -\\\nonumber &\phantom{=}& -\sigma_k' \circ \eta \circ \xi(0) \cdot  (\eta \circ \xi)'''(0)  + (\eta \circ \xi)^{(4)}(0) \cdot \sigma_k(0) +\\
\nonumber &\phantom{=}& +3 (\eta \circ \xi) '''(0) \cdot \sigma_k'(0)+3 (\eta \circ \xi) ''(0) \cdot \sigma_k''(0)+\\
\nonumber &\phantom{=}& +(\eta \circ \xi) '(0) \cdot \sigma_k'''(0) +O(\eps^2) \\
\nonumber &=&(\tilde{\xi} \circ \tilde{\eta})'''(0)+\left\{ \left( (\eta \circ \xi)^{(4)}(0)-(\xi \circ \eta)^{(4)}(0) \right) \cdot \sigma_k(0) \right\}+O(\eps^2). 
\end{eqnarray}
In the last expression in parenthesis $\sigma_k(0)=0$ for all $k > 0$, but is non-zero for $k=0$. Therefore, the eigenvector $e_0$, corresponding to translations of coordinates, is the only eigendirection of $D \cR_*$ in $T_{(\eta_*,\xi_*)}\cE(U,V)$  associated with coordinate changes which does not lie in the tangent space $T_{(\eta_*,\xi_*)}\cM(U,V)$ at $(\eta_*,\xi_*)$ to the manifold of a.c.s. pairs.

}


\end{document}